\newcommand{\ft}{\mathfrak{t}}
\newcommand{\C}{\mathbb{C}}
\renewcommand{\P}{\mathbb{P}}
\newcommand{\CP}{\mathbb{CP}}
\newcommand{\R}{\mathbb{R}}
\newcommand{\Q}{\mathbb{Q}}
\newcommand{\Z}{\mathbb{Z}}
\newcommand{\M}{\mathcal{M}}
\newcommand{\pt}{\text{pt}}
\definecolor{red}{rgb}{.6,0,0}
\definecolor{green}{rgb}{0,.6,0}
\newcommand{\cmt}[1]
{\ifthenelse {\boolean{draft}}
{{\sc \tiny \color{red} #1}}
{}}
\newcommand{\margincmt}[1]
{\ifthenelse {\boolean{draft}}
{\marginpar{{\sc \tiny \color{red} #1}}}
{}}
\newtheorem{theorem}{Theorem}[section]
\newtheorem{lemma}[theorem]{Lemma}
\newtheorem{proposition}[theorem]{Proposition}
\newtheorem{remark}[theorem]{Remark}
\newtheoremstyle{example}
{9pt}{9pt}{}{}{\bfseries}{}{.5em}{}
\theoremstyle{example}
\begin{document}

\title[On the Gromov width of polygon spaces]{On the Gromov width of polygon spaces}

\author{Alessia Mandini}\address{Departamento de Matem\'atica, PUC-Rio
Rua Marqu\^es de S\~ao Vicente 225, G\'avea, CEP 22451-900 Rio de Janeiro, Brazil\\ alessia.mandini@mat.puc-rio.br}
\author{Milena Pabiniak}\address{Mathematisches Institut, Universit\"at zu K\"oln, Weyertal 86-90, D-50931 K\"oln, Germany
\\pabiniak@math.uni-koeln.de}

\begin{abstract} 
For generic $r=(r_1,\ldots,r_n) \in \R^n_+$ the space $\M(r)$ of $n$--gons in $\R^3$ with edges of lengths $r$ 
is a smooth, symplectic manifold. We investigate its Gromov width and prove that the expression 
$$2\pi \min \{2 r_j,   (\sum_{i \neq j} r_i) - r_j\,\,|\, j=1,\ldots,n\}$$
is the Gromov width of all (smooth) $5$--gon spaces and of $6$--gon spaces, under some condition on $r \in \R^6_+$.
The same formula constitutes  a lower bound for all (smooth) spaces of $6$--gons.
Moreover, we prove that the Gromov width of $\M(r)$ is given by the above expression when $\M(r)$ is symplectomorphic
to $\C\P^{n-3}$, for any $n
\geq 4$.
\end{abstract}

\maketitle

\tableofcontents

\section{Introduction}\label{section introduction}
In 1985 Mikhail Gromov proved his famous non-squeezing theorem saying that a ball
$B^{2N}(r)$ of radius $r$ in a symplectic vector space $\R^{2N}$ cannot be symplectically embedded into $B^2(R)\times \R^{2N-2}$
unless $r\leq R$ (both sets are equipped  with the usual symplectic structure induced from $\omega_{std} = \sum dx_j \wedge dy_j$ 
on $\R^{2N}$). This motivated the definition of the invariant called the Gromov width.
Consider the ball of capacity $a$
$$ B^{2N}_a = \big \{ z \in \C^N \ \Big | \ \pi \sum_{i=1}^N |z_i|^2 < a \big 
\} \subset \R^{2N}, $$
with the standard symplectic form
$\omega_{std} = \sum dx_j \wedge dy_j$.
The \textbf{Gromov width} of a $2N$-dimensional symplectic manifold $(M,\omega)$
is the supremum of the set of $a$'s such that $B^{2N}_a$ can be symplectically
embedded in $(M,\omega)$. It follows from Darboux's Theorem that the Gromov width 
is positive unless $M$ is a point.

Let $n$ be an integer greater or equal to $4$ and let $r_1, \ldots, r_n$ be positive real numbers. 
The {\bf polygon space} $\M(r)$ is 
the space of closed piecewise linear paths in $\R^3$ such that the $j$-th step
has norm $r_j$, modulo rigid motions. 
This moduli space 
is also the symplectic reduction of the product of $n$ spheres, of radii $r_1, \ldots, r_n$,
by the diagonal action of $SO(3)$,
$$
\M(r) = \mathcal{S}_r  / \!\! /_0 SO(3) =
\big\{ (\overrightarrow{e}_1, \dots, \overrightarrow{e}_n) \in \prod_{i=1}^n S_{r_i}^2 \mid \sum_{i=1}^n  \overrightarrow{e}_i=0 \big\}/SO(3).
$$
The length vector $r$ is {\bf generic} if and only if the scalar quantity 
$$ \epsilon_I(r):= \sum_{i \in I} r_i - \sum_{i \in I^c} r_i$$ 
is not zero for any  $I \subset \{ 1, \ldots, n \}$  ($I^c$ denotes $\{ 1, \ldots, n \} \setminus I$). In this case,
the polygon space $\M(r)$ is a smooth 
symplectic (in fact, K\"ahler) manifold of dimension $2(n-3)$.
Observe that for any permutation $\sigma \in S_n$ the manifolds $\M(r)$ and $\M(\sigma(r))$ are symplectomorphic.
Note that the existence of an index set $I$ such that $ \epsilon_I(r)=0$
is equivalent to the existence of an element $(\overrightarrow{e}_1, \dots, \overrightarrow{e}_n )$ in $\prod_{i=1}^n S_{r_i}^2 $
that lies completely on a line. The stabilizer of such an element is non-trivial:
it is the $S^1 \subset SO(3)$ of rotations around that line. Therefore the associated symplectic reduction 
has a singularity. 
An index set $I$ is called {\bf short} if $\epsilon_I(r)<0$, and {\bf long} if its complement is short. Moreover
$I$ is {\bf maximal short} if it is short and is not contained in any other short set.

From an algebro-geometric point of view, polygon spaces are identified with the GIT quotient
of $(\C\P^1)^n$ by $PSL(2, \C)$. This GIT quotient is a compactification of the configuration
space of $n$ points in $\C\P^1$ and, via the Gelfand--MacPherson correspondence, 
relates polygon spaces to the symplectic reductions of the Grassmannian of $2$-planes in $\C^n$
by the maximal torus $U(1)^n$ of the unitary group $U(n)$.

In this work we analyze the Gromov width of polygon spaces $\M(r)$ for $r \in \R_+^n$ generic, $n>3$, and prove the following results.
For simplicity of notation, for $r \in \R_+^n$, let 
$$\rho(r):= 2\pi \min \{2
r_j,\,\big(\sum_{i \neq j} r_i \big) - r_j\,|\, j=1,\ldots,n\}.$$

\begin{theorem}\label{theo:5gon}
 For any generic $r \in \R_+^5$, the Gromov width of the associated manifold of $5$-gons, $\M(r)$, is equal to $\rho(r)$.
\end{theorem}

\begin{theorem}\label{theo:6gon}
For any generic $r \in \R_+^6$, the Gromov width of the associated manifold of $6$-gons, $\M(r)$, is at least $\rho(r)$.
Moreover, if for a permutation $\sigma \in S_6$ such that $r_{\sigma(1)}\leq \ldots \leq r_{\sigma(6)}$, 
one of the following holds:
\begin{itemize}
 \item $\{1,2,3,4\}$ and $\{1,2,6\}$ are short for $\sigma(r)$, or
\item $\{1,2,6\}$ and $\{4,6 \}$ are long for $\sigma(r)$, or
\item $\{ 5,6 \}$ and $\{ 2,3,6\}$ are short for $\sigma(r)$
\end{itemize}
then the Gromov width of $\M(r)$ is equal to $\rho(r)$.
\end{theorem}

\begin{theorem}\label{theo:projective}
Assume that there exists a maximal $r$-short index set $\{i_0\}$. 
In this case, $\M(r)$ is symplectomorphic to $(\C \P^{n-3}, 2 \big(\,(\,\sum_{i \neq i_0} r_i )  - r_{i_0}\big) \omega_{FS}),$ 
where $\omega_{FS}$ denotes the usual Fubini-Study symplectic structure and its Gromov width is 
$\rho(r)= 2 \pi \big(\,(\,\sum_{i \neq i_0} r_i )  - r_{i_0} \big)$.
\end{theorem}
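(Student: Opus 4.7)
The plan is to break the argument into three parts: (i) identify $(\M(r),\omega)$ with the scaled projective space $(\CP^{n-3},\,2c\,\omega_{FS})$, where $c:=\big(\sum_{i\neq i_0}r_i\big)-r_{i_0}$; (ii) apply the known Gromov width of scaled Fubini--Study projective spaces; and (iii) verify that the general Gromov width formula of the theorem reduces to $2\pi c$ under the maximal short hypothesis.

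For step (i), my approach is to apply the Kapovich--Millson bending flow. Choosing the fan triangulation of the polygon rooted at an endpoint of the $i_0$-th edge, the lengths of the $n-3$ diagonals are the components of the moment map of an effective Hamiltonian $T^{n-3}$-action on $\M(r)$, with image a convex polytope $\Delta\subset\R^{n-3}$ cut out by the triangle inequalities at the $n-2$ subtriangles. Under the hypothesis that $\{i_0\}$ is maximal short, the $\{i_0,k\}$-long conditions translate precisely into the assertion that only the upper triangle inequalities are active facets of $\Delta$, so $\Delta$ is an $(n-3)$-simplex. A direct calculation of its affine edge lengths (together with the $2\pi$-factor from the bending-flow periodicity) and Delzant's classification then identify $(\M(r),\omega)$ with $(\CP^{n-3},\,2c\,\omega_{FS})$.

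Steps (ii) and (iii) are comparatively short. The Gromov width of $(\CP^N,\lambda\,\omega_{FS})$ equals $\pi\lambda$ -- lower bound by an explicit toric ball embedding of the type considered by Karshon--Tolman, and upper bound by Gromov's non-squeezing theorem applied to a projective line -- so step (i) immediately yields Gromov width $2\pi c$. For step (iii), I first claim $r_{i_0}=\max_j r_j$: if $r_j>r_{i_0}$ for some $j$, a short manipulation of the long inequalities for $\{i_0,k\}$ shows that $\{j\}$ is also maximal short, and then applying the maximal short analysis to both $\{i_0\}$ and $\{j\}$ with the minimum-length remaining edge produces a contradiction for every $n\geq 4$. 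Granted this, the identity $\sum_k r_k=2r_{i_0}+c$ gives $\big(\sum_{i\neq j}r_i\big)-r_j=2r_{i_0}+c-2r_j$, which is $\geq c$ for $j\neq i_0$ (using $r_{i_0}\geq r_j$) and equals $c$ for $j=i_0$; meanwhile, the long condition on $\{i_0,j\}$ rearranges to $2r_j>c$ for $j\neq i_0$, so also $2r_{i_0}\geq 2r_j>c$. Therefore the minimum of the formula over $j$ is attained at $j=i_0$ and equals $c$.

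The main obstacle is step (i): rigorously confirming that $\Delta$ is a simplex of the correct affine size and that Delzant's classification yields precisely the coefficient $2c$ in front of $\omega_{FS}$. This reduces to a careful combinatorial check that the maximal short hypothesis eliminates exactly the lower triangle-inequality facets of $\Delta$, together with a normalization computation for the $2\pi$-factor arising from the periodicity of the bending flow.
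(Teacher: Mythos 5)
Your argument is correct, but it follows the toric/bending-flow route that the paper records only as an \emph{alternative} in a Remark; the paper's primary proof is different. The paper's main argument cites [Mandini, Prop.~4.2] for the diffeomorphism $\M(r)\cong\CP^{n-3}$ (which holds exactly when a maximal short singleton exists) and then computes the symplectic volume of $\M(r)$ by collapsing a combinatorial Duistermaat--Heckman-type sum to $\frac{(2\pi)^{n-3}}{2(n-3)!}\gamma^{n-3}$; since $H^2(\CP^{n-3};\R)\cong\R$, this volume pins down the cohomology class of the reduced form and hence the scaling $2\gamma\,\omega_{FS}$ and the Gromov width, with no need to identify a toric structure. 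Your route --- caterpillar bending action, moment polytope is an $(n-3)$-simplex, Delzant classification --- is precisely what the paper's Remark after the volume proposition sketches. On the step you rightly flag as the main obstacle, the paper sidesteps the ``which triangle inequalities are active'' discussion by exhibiting an explicit affine $GL(n-3,\Z)$-transformation $F$ carrying $\Delta^{n-3}(\gamma)$ onto the bending moment image; note that the binding facets are \emph{not} simply the upper triangle inequalities from each subtriangle, so the concrete map is cleaner and safer than your informal characterization. Your step (iii) --- that $r_{i_0}=\max_j r_j$, that the maximal-short singleton is unique, and that the minimum in the Gromov width formula is therefore attained at $j=i_0$ with value $\gamma$ --- is correct and supplies the details behind the Introduction's Remark. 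The trade-off: the volume computation is uniform and bypasses polytope combinatorics but leans on a prior diffeomorphism classification; your toric route is more self-contained and constructive but requires the careful polytope analysis you identified.
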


We conjecture that the Gromov width of polygon spaces, for any $n\geq 4$ and any $r$ generic
is $\rho(r)$.
\begin{remark}
Note that as $\M(r)$ and $\M(\sigma(r))$ are symplectomorphic for all $\sigma \in S_n$, we can always assume that $r_1 \leq \ldots \leq r_n$.
With this assumption, for $n\geq 4$
\begin{align*}\rho(r)
&=2\pi \min \{2r_1,\,\big(\sum_{i \neq n} r_i \big) - r_n\}\\
&=\begin{cases}
2 \pi \big((\sum_{i \neq n} r_i ) - r_n\big)& \textrm{ if $\{n\}$ is maximal short} \\
4\pi \,r_1& \textrm{ otherwise }.
\end{cases}
\end{align*}
\end{remark}

An important tool in the proof of the above results is a toric action, called the bending action, 
defined on a dense open subset of $\M(r)$ (possibly on the whole $\M(r)$).
Let $\overrightarrow{d}=\overrightarrow{e}_{i}+\ldots +\overrightarrow{e}_{i+l}$ be a choice of a diagonal of the polygons in $\M(r)$.
The circle action associated to $\overrightarrow{d}$ rotates the piecewise linear path 
$\overrightarrow{e}_{i}+\ldots +\overrightarrow{e}_{i+l}$ along the axis of the diagonal $\overrightarrow{d}$. 
This action is defined on the dense open subset of $\M(r)$ consisting of polygons $P$ for which the diagonal
$\overrightarrow{d}$ does not vanish.
In this way any system of $(n-3)$ non--intersecting diagonals gives a toric action of $(S^1)^{n-3}$ on a dense open subset of 
$\M(r)$ (where the respective diagonals do not vanish). For many $r$'s and for appropriate choices of diagonals 
the action can be defined on the whole $\M(r)$. Using the flow of this action one can construct symplectic 
embeddings of balls and thus obtain lower bounds for the Gromov width.
The bending action has a central role also in determining upper bounds for the Gromov width, as 
certain tools (for example, \cite{Lu}) are available for toric manifolds which are Fano, or blow ups of Fano toric 
manifolds at toric fixed points. Using a Moser-type continuity argument we obtain upper bounds for the Gromov width of some non-toric spaces $\M(r)$.
The upper bounds coincide with the lower bounds we have determined by embedding techniques, and so we determine
an explicit formula for the Gromov width of (some) polygon spaces.

One should mention here that there are efficient methods for finding the Gromov width (and for solving the more general 
problem of ball packings) of $4$-dimensional manifolds that do not require the use of toric geometry. 
In particular, the Gromov width of the spaces of $5$-gons could 
also be found using Propositions 1.9, 1.10 and Remark 1.11 of \cite{McDuffellipsoids} by McDuff. These methods, however, are specific for dimension $4$. 
Therefore, instead of using these tools, we use some tools from toric geometry as those can be applied in any dimension.

{\bf Organization.}
We start with describing the tools for finding the Gromov width in Section \ref{section gromov width}. In Section \ref{section polygon spaces} we carefully define the polygon spaces and various toric actions on them (or on their subsets). In Section \ref{section projective} we compute the Gromov with of these $\M(r)$ which are symplectomorphic to a projective space. Sections \ref{section 5gons} and \ref{section 6gons} are devoted to the computation of the Gromov width of spaces of $5$-gons and $6$-gons, respectively.

{\bf Acknowledgements:} The authors would like to thank Tara Holm, Yael Karshon, Dominic Joyce, Dusa McDuff, Felix Schlenk and Kazushi Ueda for helpful discussions.
The authors are very grateful to the referees for their corrections and useful comments.

The research leading to these results has received funding from the European Research Council 
under the European Union's Seventh Framework Programme (FP7/2007-2013) /
ERC Grant agreement no. 307119.
The second author was supported by the Funda\c{c}\~ao para a Ci\^encia e a Tecnologia (FCT, Portugal):
fellowship SFRH/BPD/87791/2012 and projects PTDC/MAT/117762/2010, EXCL/MAT-GEO/0222/2012.

\section{Gromov width}\label{section gromov width}
\subsection{Techniques for finding a lower bound for the Gromov width.}
We start with describing techniques for finding a lower bound for the Gromov width.
If a manifold $(M,\omega)$ is equipped with a Hamiltonian (so effective) action of a torus 
$T$, one can use this action to construct explicit embeddings of balls and 
therefore to obtain a lower bound for the Gromov width. 
Such a construction was provided by Karshon and Tolman in \cite{KarshonTolman}.
If additionally the action is {\bf toric}, that is, $\dim T= \frac 1 2 \dim M$, 
then more constructions are available
(see for example: \cite{Traynor}, \cite{Schlenk}, \cite{LMS}). 
In what follows we use 
results of Latschev, McDuff and Schlenk, \cite{LMS}, presented here as Proposition \ref{prop 
diamond} and Proposition \ref{prop diamondlike}.

Recall that a Hamiltonian action of a torus $T$ on a symplectic manifold $(M,\omega)$ 
gives rise to a moment map $\mu \colon M \rightarrow \mathfrak{t}^*$, to the dual of the Lie algebra of $T$, 
which is unique up to translation in $\mathfrak{t}^*$. 
Reparametrizing the torus $T$ by some automorphism $\psi$ of $T$ 
one obtains a Hamiltonian action of $T$ on $M$ with moment map $ \Psi \circ \mu$, 
where $\Psi \in GL(\dim T, \Z)$ denotes the map induced by the automorphism $\psi \colon T \rightarrow T$.
If $M$ is compact then the image, $\mu(M)$, is a Delzant polytope. 
Identifying $\mathfrak{t}^*$ with $\R^{\dim T}$ we can view $\mu(M)$ as a polytope in $\R^{\dim T}$. 
Note however that such an identification is not unique: it depends on the choice of splitting $T$ into a product of circles, 
and on the choice of identification of the Lie algebra of $S^1$ with the real line $\R$. 
Changing the splitting of $T$ results in applying a $GL(\dim T, \Z)$ transformation to $\R^{\dim T}$, 
while changing the identification $Lie(S^1) \cong \R$ results in rescaling. 

As we are to calculate a numerical invariant, we need to fix a way of 
identifying the Lie algebra of $S^1$ with the real line $\R$. We think of 
the circle 
as $S^1 = \R / (2 \pi \Z)$. With this convention the moment map for the standard 
$S^1$-action on $\C$ by rotation with speed $1$ is given (up to an addition of a constant) by 
$z \mapsto -\frac 1 2 |z|^2$.  Define
$$ \Diamond^n(a):= \Big\{ (x_1,\ldots,x_n) \in \R^n ({\mathbf{x}}) \,|\, 
\sum_{j=1}^{n} |x_j| < \frac a 2 \Big\}\subset \R^n (\mathbf{x}).$$ When the 
dimension is understood from the context we simply write $\Diamond(a)$.
If $M^{2n}$ is toric, $\mu$ is the associated moment map and $\Diamond (a) 
\subset  \textrm{Int }\mu(M)$ is a subset of the interior of the moment map 
image, then a subset of $\mu^{-1}(\Diamond(a))=\Diamond(a) \times T^n$ is 
symplectomorphic to $\Diamond(a) \times (0,2\pi)^n \subset \R^n(\mathbf{x}) 
\times \R^n(\mathbf{y})$ with the symplectic structure induced from the 
standard 
one on $\R^n(\mathbf{x}) \times \R^n(\mathbf{y})$.
Below we present a result from Latschev, McDuff and Schlenk, (\cite[Lemma 4.1]{LMS}\footnote{This result was already used in \cite[Section 5]{Sch} and \cite[Section 9.3]{Schlenk} though not explicitly stated as a proposition.}) which, though stated in dimension $4$, holds 
also in higher dimensions. Note that the authors are using the convention where 
$S^1=\R/\Z$ and therefore the proposition below looks differently than \cite[Lemma 4.1]{LMS}. 
To translate the conventions observe that $\Diamond(a) \times (0, 2\pi)$ is symplectomorphic to $\Diamond(2\pi a) \times (0,1)$.

\begin{proposition}\cite[Lemma 4.1]{LMS} \label{prop diamond}
For each $\varepsilon >0$ the ball $B^{2n}_{2 \pi (a- \varepsilon)}$ of capacity $2 \pi (a- \varepsilon)$ symplectically 
embeds into $\Diamond^n(a) \times (0,2 \pi)^n \subset 
\R^n(\mathbf{x}) \times \R^n(\mathbf{y})$. 
Therefore, if for a toric manifold $(M^{2n},\omega)$ with moment map 
$\mu$,  
$$\Psi( \Diamond^n(a)) +x \subset \textrm{Int}\,\mu(M)$$ 
for some $ \Psi \in GL(n,\Z)$ and $x \in \R^n$,
 then the Gromov width of $(M^{2n}, \omega)$ is at least $2 \pi \,a$.
\end{proposition}
A more general result is true. 
Let $l_j<0<g_j$ be real numbers such that $g_j-l_j=a$, $j=1,\ldots,n$. We build 
a, not necessarily symmetric, cross whose arms are open intervals of length $a$ and take the convex hull 
of it. This way we obtain a ``diamond-like'' open subset 
$\underline{ \Diamond}^n(a)\subset \R^n (\mathbf{x})$. 
$$ \underline{ \Diamond}^n(a):= \underline{ \Diamond}^n(a)(l_1,g_1,\ldots,l_n,g_n)= Conv ( \cup_{j=1}^{n} \{ x_j \in 
(l_j,g_j), x_i=0 \textrm{ for }i\neq j\} ).$$
\begin{figure}[htbp]
\begin{center}
\psfrag{l1}{\footnotesize{$l_1$}}
\psfrag{l2}{\footnotesize{$l_2$}}
\psfrag{g1}{\footnotesize{$g_1$}}
\psfrag{g2}{\footnotesize{$g_2$}}
\includegraphics[width=4cm]{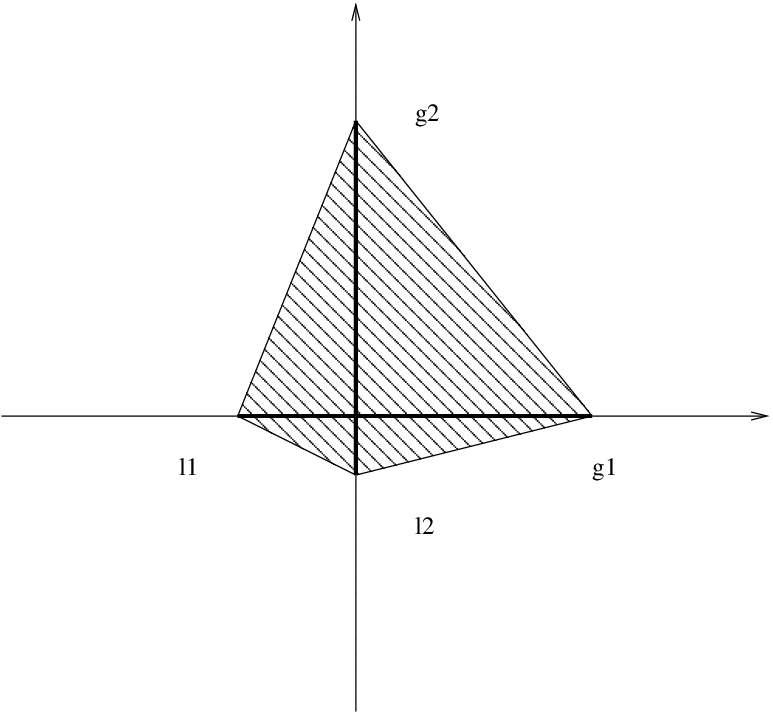}
\caption{Diamond-like shape $\underline{ \Diamond}^n(a)$,  $a=g_j-l_j$.}
\label{fig:diamond}
\end{center}
\end{figure}

\begin{proposition}\cite[Section 4.2]{LMS} \label{prop diamondlike} 
For each $\varepsilon >0$ the ball $B^{2n}_{2 \pi (a- \varepsilon)}$ of capacity $2 \pi (a- \varepsilon)$ symplectically 
embeds into $\underline{ \Diamond}^n(a)\times (0,2 \pi)^n \subset 
\R^n(\mathbf{x}) \times \R^n(\mathbf{y})$. Therefore, if  there exist $ \Psi \in GL(n,\Z)$ and $x \in \R^n$ such that 
$$\Psi( \underline{ \Diamond}^n(a)) +x \subset \textrm{Int}\,\mu(M)$$ 
for a toric manifold 
$(M^{2n},\omega)$ with moment map $\mu$, then the Gromov width of $(M^{2n}, 
\omega)$ is at least $2 \pi \,a$.\end{proposition}
Note that a simplex is a particular case of a diamond-like shape. Therefore the 
above Proposition also implies the following result 

\begin{proposition}\cite[Proposition 1.3]{Lu}\cite[Proposition 2.5]{Pabiniak}\cite[Lemma 5.3.1]{Schlenk}
Let $\Delta^n(a):=\{(x_1,\ldots,x_n) \in \R^n_{>0}\,|\,\sum_{k=1}^{n}x_k <a\}$ be the $n$-dimensional simplex. 
For any connected proper (not necessarily compact) Hamiltonian $T^n$ space $M$ let 
$$\mathcal{W}(\Phi(M)):= \sup \{a>0\,|\, \exists \Psi \in GL(n,\Z), x \in
\R^n\,s.t. \Psi (\Delta^n(a))+x \subset \Phi(M) \},$$
where $\Phi$ is some choice of moment map. Then the Gromov width of $M$ is at least $2 \pi \mathcal{W}(\Phi(M))$.
 \end{proposition}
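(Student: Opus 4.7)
The plan is to deduce this from Proposition \ref{prop diamondlike} by fitting a translated diamond-like shape inside the simplex, after first absorbing the $GL(n,\Z)$-transformation and the translation into a reparameterization of the torus and a shift of the moment map.

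First, I would reduce to the case $\Psi = I$, $x = 0$. Every $\Psi \in GL(n,\Z)$ induces an automorphism of the torus $T^n$, and precomposing the $T^n$-action on $M$ by this automorphism together with shifting the moment map by $-\Psi^{-1}x$ produces a new moment map $\tilde\Phi := \Psi^{-1}\circ(\Phi - x)$ for a Hamiltonian $T^n$-action on the same symplectic manifold $(M,\omega)$, whose image contains $\Delta^n(a)$. Since the Gromov width is a symplectic invariant of $(M,\omega)$, it suffices to prove the lower bound $2\pi a$ under the assumption $\Delta^n(a)\subset \Phi(M)$.

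Second, I would show that a translated diamond-like shape of parameter arbitrarily close to $a$ fits inside $\Delta^n(a)$. Given $\eta>0$, set $\epsilon := \eta/n$ and choose the center $c := \epsilon(1,\ldots,1)$ together with $l_j := -\epsilon/2$ and $g_j := a - n\epsilon - \epsilon/2$ for every $j=1,\ldots,n$. Then $l_j < 0 < g_j$ and $g_j - l_j = a - n\epsilon = a-\eta$. A direct check shows that each of the $2n$ vertices $c + l_j e_j$ and $c + g_j e_j$ lies in $\Delta^n(a)$ (all coordinates are strictly positive and their sum is strictly less than $a$). By convexity, $c + \underline{\Diamond}^n(a-\eta)(l_1,g_1,\ldots,l_n,g_n) \subset \Delta^n(a) \subset \mathrm{Int}\,\Phi(M)$.

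Third, I would apply Proposition \ref{prop diamondlike} to the translated moment map $\Phi - c$, whose image contains $\underline{\Diamond}^n(a-\eta)$ in its interior. This produces symplectic embeddings $B^{2n}_{2\pi(a-\eta-\varepsilon)}\hookrightarrow M$ for every $\varepsilon > 0$. Letting $\eta,\varepsilon \to 0$ and then taking the supremum over all $a$ for which a translate $\Psi(\Delta^n(a))+x$ fits in $\Phi(M)$ yields the stated lower bound $2\pi\,\mathcal{W}(\Phi(M))$ on the Gromov width. I do not anticipate a serious obstacle: the $GL(n,\Z)$-invariance is standard, and the containment of a diamond-like shape inside the simplex is elementary convex geometry. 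The only subtlety is that the definition of $\underline{\Diamond}^n$ requires the strict inequality $l_j < 0$, so the simplex itself is not formally a diamond-like shape---but an interior shift by a vector with small positive entries produces a genuine such shape of arbitrarily close parameter.
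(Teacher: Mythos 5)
Your proof is correct and follows the same route the paper itself sketches. Right before the statement, the paper says only ``Note that a simplex is a particular case of a diamond-like shape. Therefore the above Proposition also implies the following result,'' i.e., it treats the simplex result as an immediate corollary of Proposition~\ref{prop diamondlike}. Your argument supplies exactly the two details that this one-line remark glosses over. First, you correctly handle the $GL(n,\Z)$ transformation and the translation by observing that they correspond to reparameterizing the torus and adding a constant to the moment map, neither of which changes the symplectic manifold and hence neither changes its Gromov width. Second, and more importantly, you notice that the open simplex $\Delta^n(a)$ is \emph{not} literally of the form $\underline{\Diamond}^n(a)(l_1,g_1,\ldots,l_n,g_n)$, since the definition requires $l_j<0<g_j$ and so every diamond-like shape contains the origin, while $\Delta^n(a)$ lies in the open positive orthant; your $\eta$-shrinking-and-shifting trick makes the paper's claim rigorous by fitting $c+\underline{\Diamond}^n(a-\eta)$ strictly inside $\Delta^n(a)$ and then taking $\eta\to 0$. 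One small point of bookkeeping: the moment map transforms by $\Psi^T$, not $\Psi^{-1}$, when one precomposes the action with the torus automorphism induced by $\Psi$; but since $\Psi\mapsto(\Psi^T)^{-1}$ is a bijection of $GL(n,\Z)$ with itself this does not affect your reduction. Overall the proposal is essentially a careful write-up of what the paper asserts in one sentence, and it is correct.
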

 
Note that \cite{Pabiniak}, \cite{LMS} and \cite{Schlenk} use different identification of $\ft$ with $(\R^n)^*$ than \cite{Lu} and we here.

\subsection{Techniques for finding an upper bound for the Gromov width.}
It was already observed by Gromov that one can use J-holomorphic curves to find upper bounds for the Gromov width 
(see Proposition \ref{thm ub jholom}).
Many tools for finding upper bounds are based on a similar idea: non-vanishing of a certain Gromov-Witten type 
invariant implies some upper bound for the Gromov width. 
We start this section with explaining the above observation in more details. 
Later we recall some tools for finding upper bounds for the Gromov width constructed by Lu (\cite{Lu}), making use of a toric action.

\subsubsection{J-holomorphic curves and upper bounds of the Gromov width.}
Here we set the essential definitions and notations to be able to use pseudoholomorphic
curves and Gromov-Witten invariants to compute the upper bound of the Gromov width. 
We refer to McDuff--Salamon \cite{MSholom} for a comprehensive exposition of the subject, and to 
Caviedes \cite{Caviedes}, Zoghi \cite{Zoghi}, Karshon-Tolman \cite{KarshonTolman}
where these techniques are used to determine the Gromov width of certain coadjoint orbits.

An almost complex structure on a symplectic manifold $(M^{2n}, \omega)$ is a smooth fiberwise linear map
$J:TM \to TM$ such that $J^2=-Id$. An almost complex structure $J$ is $\omega$-compatible
if $g(v,w)= \omega(v,Jw)$ defines a Riemannian metric. Denote by $\mathcal{J}(M, \omega)$
the space of all $\omega$-compatible almost complex structures.

Let $(\C\P^1, j)$ be the Riemann sphere equipped with its standard complex structure $j$ and 
let $J$ be a $\omega$-compatible almost complex structures on $M$. A {\bf $J$-holomorphic curve}
is a map $u: \C\P^1 \to M $ satisfying $J \circ du = du \circ j$.
An important feature of $J$-holomorphic curves is that they come in families, which combine together 
in a moduli space as follows.
Given a homology class $A \in H_2(M; \mathbb Z)$, 
let $\M_A(M, J)$ denote
the moduli space 
of simple $J$-holomorphic curves:
$$\M_A(M, J)= \{ u: \C\P^1 \to M \mid u \text{ is a $J$-hol. curve}, u_*[\C\P^1]=A, u \text{ is simple} \}.$$

Consider the evaluation map
$$
\begin{array}{ccc}
\M_A(M, J) \times \C\P^1& \to& M\\
(u,z) & \to & u(z).
\end{array}
$$

The group $PSL(2,\C)$ acts naturally on $\C\P^1$ and by reparametrization on $\M_A(M, J) $. The
evaluation map descends to the quotient and we obtain the map
$$
ev_J:\M_A(M, J) \times_{PSL(2,\C)} \C\P^1 \to M.
$$

The following result explains how J-holomorphic curves 
can be used to obtain the upper bounds for the Gromov width. The idea goes back to Gromov and was used by him to prove 
his famous non-squeezing theorem. 
\begin{proposition}\cite[Proposition 3.6]{Zoghi}\label{thm ub jholom}
 Let $(M^{2n}, \omega)$ be a compact symplectic manifold. Given $A \in H_2(M; \mathbb Z)\setminus \{0\}$,
if for a dense open subset of $\omega$-compatible almost complex structures $J$ the evaluation map $ev_J$ is onto, 
then for any symplectic embedding $B^{2n}(a) \hookrightarrow M$ of a ball of radius $a$, (capacity $\pi a^2$), one has
$$ \pi a^2 \leq \omega(A)$$
where $\omega(A)$ is the symplectic area of $A$. In particular, it follows that the Gromov width of
$(M^{2n}, \omega)$ is at most $\omega(A)$.
\end{proposition}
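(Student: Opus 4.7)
The plan is the classical Gromov $J$-holomorphic argument: use a symplectic embedding of a ball to produce a $J$-holomorphic curve through a chosen point, then apply the monotonicity inequality. Let $\phi: B_a^{2n} \hookrightarrow M$ be a symplectic embedding and set $p := \phi(0)$. First I would extend $\phi_* J_{std}$ (the push-forward of the standard complex structure on the ball) to a smooth $\omega$-compatible almost complex structure $J$ on all of $M$, using the contractibility of the space of $\omega$-compatible almost complex structures agreeing with a prescribed one on a closed set.

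Next I would produce a $J$-holomorphic representative of $A$ through $p$. The hypothesis gives a sequence $J_k \to J$ in $\mathcal{J}(M,\omega)$ with $ev_{J_k}$ surjective, so for each $k$ we may choose a simple $J_k$-holomorphic curve $u_k:\C P^1 \to M$ in class $A$ whose image contains $p$. Gromov's compactness theorem (see \cite{MSholom}) then extracts a subsequential limit, a stable $J$-holomorphic map $u_\infty$ in class $A$ whose image still contains $p$. Let $C_0$ be an irreducible component of $u_\infty$ passing through $p$.

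By construction $J$ coincides with the push-forward of $J_{std}$ on $\phi(B_a^{2n})$, so the pull-back $V := \phi^{-1}(C_0 \cap \phi(B_a^{2n}))$ is a proper $J_{std}$-holomorphic subvariety of the standard ball passing through the origin. The monotonicity inequality for pseudoholomorphic curves (\cite{MSholom}, Proposition 4.3.1) then bounds the symplectic area of $V$ from below by the capacity of the ball, and hence
$$a \leq \int_V \omega_{std} \leq \omega(C_0) \leq \omega(A),$$
since the components of a stable map have nonnegative symplectic areas summing to $\omega(A)$. Taking the supremum over all symplectic embeddings yields the claimed bound on the Gromov width.

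The main obstacle is the compactness step: one must verify that at least one component of the possibly nodal bubble tree $u_\infty$ passes through $p$ with nonzero area. This is standard and follows from the fact that Gromov convergence respects incidence at marked points (we may regard $p$ as a marked point of each $u_k$), together with the stability condition ruling out ghost components carrying the incidence alone. The monotonicity inequality itself, while also standard, is the deep geometric ingredient; it generalizes the Euclidean fact that any proper minimal surface through the center of a ball has area at least that of a flat disk of the same radius.
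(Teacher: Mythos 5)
The paper does not actually give a proof of this Proposition: it is quoted from Caviedes \cite{Caviedes}, with the proof deferred to Zoghi's thesis \cite{Zoghi}, so there is no ``paper's own proof'' to compare against. Your argument is the classical Gromov non-squeezing scheme and is essentially correct. Two remarks are in order.

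First, a small notational wrinkle inherited from the statement itself: the paper earlier defines $B_a^{2N}$ as the ball of \emph{capacity} $a$ (i.e.\ radius $\sqrt{a/\pi}$), whereas the inequality $\pi a^2 \le \omega(A)$ in the Proposition is phrased as though $a$ were the \emph{radius}. Your proof uses $a$ as the capacity, which is what makes the monotonicity conclusion ``$a \le \int_V \omega_{std}$'' come out cleanly; this is the right convention for the conclusion about Gromov width, but one should acknowledge the clash with the displayed $\pi a^2$.

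Second, and more substantively, your handling of ghost components in the Gromov limit is not quite right as stated. You argue that stability ``rules out ghost components carrying the incidence alone,'' but in a genus-zero stable map a ghost component \emph{can} carry a marked point, as long as it has at least three special points in total (e.g.\ the marked point plus two nodes). So choosing $C_0$ to be an arbitrary irreducible component through $p$ and setting $V := \phi^{-1}(C_0 \cap \phi(B))$ could give a single point of zero area, and the monotonicity inequality would then fail. The fix is standard but should be spelled out: since $A \neq 0$ and the ball is contractible, the whole image cannot lie inside $\phi(B)$, and since the marked point maps to $p$, some \emph{nonconstant} component through $p$ exists (if the marked component is a ghost mapping to $p$, then every component attached to it at a node also passes through $p$, and at least one of these is nonconstant because otherwise the total energy would vanish). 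Taking $C_0$ to be such a nonconstant component, or equivalently working with the image cycle $u_\infty(\Sigma) \cap \phi(B)$, yields a proper $J_{std}$-holomorphic subvariety of $B$ through $0$ with positive area, and the monotonicity inequality applies. With that correction your proof goes through.
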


One way to prove that the evaluation map is onto is via Gromov-Witten invariants.
Fix $A \in H_2(M; \mathbb Z)\setminus \{0\}$.
Let $\mathcal{J}_{reg}(M,\omega,A)$ denote the set of $\omega$-compatible almost complex structures
which are regular for $A$ in the sense of Definition 3.1.1 of \cite{MSholom}. For $J\in \mathcal{J}_{reg}(M,\omega,A)$ the set
$\M_A(M, J)$ is a smooth manifold. 
Let $\alpha=\alpha_1 \times \ldots\times \alpha_k$ be 
an element of $H_d(M^k; \Z)$ where $d$ is such that
\begin{equation}\label{GWittendim}
d+ ( \dim M + 2 c_1(TM)[A] +2k-6)= \dim M^k.
\end{equation}
The Gromov-Witten invariant
$$\Phi_A(\alpha_1,\ldots, \alpha_k) \in \Q$$
``counts" (in nice situations it is an element of $\Z$) the number of J-holomorphic curves $u$ in the homology class $A$ which meet each of the cycles $\alpha_1,\ldots, \alpha_k$.
The precise definition of the Gromov-Witten invariant 
involves some delicate and technical tools that go beyond what is needed for the purpose of this work, 
so we refer the reader to \cite{MSholom}. 
We want to stress that Gromov--Witten invariants are symplectic invariants and independent of the choice of 
an almost complex structure $J \in \mathcal{J}_{reg}(M, \omega,A)$.

Let $[\text{pt}]$ denote the Poincar\'e dual of the homology class of a point.
If $\Phi_A([\pt],\alpha_2,\ldots, \alpha_k) \neq 0$ for some classes $\alpha_2,\ldots, \alpha_k$ then the 
evaluation map is onto and we can apply the above theorem.

In Section \ref{section 6gons} we will apply this method to $6$-dimensional (so semipositive) 
symplectic manifolds 
$(M,\omega)$ with homology classes $A$ for which
 $c_1(TM)[A]=2$. Then one can take $k=1$ and consider $\Phi_A([\pt]) \in \Z$. When $\Phi_A([\pt])\neq 0$ 
the above theorem implies that the Gromov width of $(M,\omega)$ is not greater than $\omega(A)$.

\subsubsection{Upper bounds for toric manifolds.}\label{sec ub toric}
Let 
$$\Delta=\bigcap_{i=1}^d\{x\in\mathbb{R}^n |\langle
  x,u_i\rangle\geq\lambda_i\}$$
be a Delzant polytope with primitive inward normals to the facets $u_1,\ldots,u_d$ and let $X_{\Delta}$ be the smooth toric symplectic manifold
corresponding to it. Let $\Sigma=\Sigma_{\Delta}$ be the fan associated to $\Delta$, and let $G(\Sigma)=\{u_1,\ldots,u_d\}$ 
denote the generators of the $1$-dimensional cones of $\Sigma$. A well-known construction in algebraic geometry 
assigns to $\Sigma$ a toric variety $X_{\Sigma}$ (no symplectic structure yet). Here $X_{\Sigma}$ is compact and 
smooth because $\Sigma$ is smooth and its support is the whole $\R^n$. Moreover, our $X_{\Sigma}$ is projective and 
therefore there is a one-to-one correspondence between K\"ahler forms on $X_{\Sigma}$ and strictly convex support 
functions $\varphi$ on $\Sigma$. Recall that a piecewise linear function $\varphi$ on $\Sigma$ is called a strictly 
convex support function for $\Sigma$ if 
\begin{itemize}
 \item[(i)] it is upper convex, i.e., $\varphi (x)+\varphi(y) \geq \varphi(x+y)$ for all $x,y \in \R^n$, and 
\item[(ii)] the restrictions of it to any two different $n$-dimensional cones $\sigma_1$, $\sigma_2 \in \Sigma$ are two 
different linear functions,
\end{itemize} (see \cite[Section 2]{Lu}). 
Given a support function $\varphi$ on $\Sigma$ the symplectic toric manifold $(X_{\Sigma}, 2 \pi \varphi)$ has moment map image 
$\Delta_{\varphi}$ defined by inequalities $\langle x,m \rangle \geq -\varphi(m)$ for all $m \in \R^n$. Therefore, the symplectic toric manifold, $X_{\Delta}$, obtained from $\Delta$ via the Delzant construction is $(X_{\Sigma}, 2 \pi \varphi)$ where $\varphi(u_i)=-\lambda_i$. To the pair $(\Sigma, 2 \pi \varphi)$ Lu associates 
\begin{align*}
\Upsilon (\Sigma, 2 \pi \varphi):&=\inf \{\sum_{k=1}^d 2 \pi \varphi(u_k) a_k >0\,|\,\sum_{k=1}^d u_k a_k=0, a_k \in \Z_{\geq 0}, k=1,\ldots ,d \} \\
 &=\inf \{-\sum_{k=1}^d 2 \pi \lambda_k a_k >0\,|\,\sum_{k=1}^d u_k a_k=0, a_k \in
\Z_{\geq 0}, k=1,\ldots ,d \} 
\end{align*}
and use it to describe an upper bound for the Gromov width of toric Fano manifolds. 

A toric manifold is {\bf Fano} if the anticanonical divisor is ample.
We refer the reader to, for example, \cite{CLS} or \cite{K}, for more information about Fano varieties.
Here we only mention the properties that will be relevant to our results.
For compact symplectic toric manifolds one can determine whether it is Fano by looking at the moment map image.
As the property of being Fano is a property of the underlying toric variety, not of the symplectic structure, 
it is enough to analyze the fan associated to the moment map image.
A compact symplectic toric manifold $M^{2n}$, with associated fan $\Sigma$, is Fano 
if and only if there exists a monotone polytope
$$\Delta_{mon}=\{x \in \R^n\,|\,\langle x, u_j\rangle \geq -1, \,j=1,\ldots, d\},$$
(the vectors $u_1, \ldots, u_d$ are primitive inward normals to the facets of $\Delta_{mon}$), 
whose fan is also $\Sigma$. This follows from Theorem 8.3.4 of \cite{CLS}.
Another way to see that is by observing that the dual 
$\Delta_{mon}^*=\{y \in \R^n\,|\,\langle x, y \rangle \geq -1\}$
 is 
exactly equal to the convex hull
of the points $\{u_j, \,j=1,\ldots,d\}$ and applying Proposition 3.6.7 of \cite{K}.
In particular all monotone compact symplectic toric manifolds are Fano.

We now quote a result of Lu which we will use to find upper bounds of the Gromov width.
\begin{theorem}\cite[Theorem 1.2]{Lu}\label{theorem LuFanoUpperBounds}
 If $X_{\Delta}=(X_{\Sigma}, 2 \pi \varphi)$ is Fano then the Gromov width of $X_{\Delta}$ is at most 
$$\Upsilon (\Sigma, 2 \pi \varphi)=\inf \{-\sum_{k=1}^d  2 \pi \lambda_k a_k >0\,|\,\sum_{k=1}^d u_k a_k=0, a_k \in
\Z_{\geq 0}, k=1,\ldots ,d \} $$
\end{theorem}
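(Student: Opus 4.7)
The plan is to invoke the J-holomorphic curve machinery of Proposition \ref{thm ub jholom} with a carefully chosen effective curve class $A\in H_2(X_\Delta;\Z)$. The effective cone of curves on the toric variety is generated by classes dual to nonnegative integral relations $\sum_k a_k u_k=0$, and the symplectic area of such a class, computed with respect to $2\pi\varphi$, equals $-\sum_k 2\pi\lambda_k a_k$. By definition of $\Upsilon(\Sigma,2\pi\varphi)$, this positive infimum is realized by some relation; fix one minimizer and let $A$ be the corresponding class. The goal is then to show that the Gromov width of $X_\Delta$ is at most $\omega(A)$.

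First I would verify that for a dense open subset of $\omega$-compatible almost complex structures $J$, the evaluation map
$$ev_J:\M_A(X_\Delta,J)\times_{PSL(2,\C)}\C\P^1\to X_\Delta$$
is surjective. For the standard integrable toric complex structure $J_0$, each nonnegative relation among the $u_k$ gives rise to a one-parameter family of $T$-invariant rational curves in class $A$, and translating by the torus action one sees that such curves sweep out all of $X_\Delta$. To transfer this geometric surjectivity to a generic $J$, I would invoke the Fano hypothesis: since $c_1(TX_\Delta)$ pairs positively with every nonzero effective curve class, no bubbling into stable maps of strictly smaller area is possible, which keeps the moduli space of class-$A$ curves compact and unobstructed under deformation of the almost complex structure. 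Consequently, with $k$ and auxiliary cycles $\alpha_2,\ldots,\alpha_k$ chosen to satisfy the dimension constraint from Section \ref{section gromov width}, the Gromov--Witten invariant $\Phi_A([\pt],\alpha_2,\ldots,\alpha_k)$ is nonzero, and $ev_J$ is onto for generic $J$.

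Once surjectivity of $ev_J$ is in hand, Proposition \ref{thm ub jholom} yields
$$ \text{Gromov width}(X_\Delta,2\pi\varphi)\,\leq\, \omega(A)\,=\,-\sum_{k=1}^d 2\pi\lambda_k a_k. $$
Since this argument applies to any effective relation producing a positive value of $-\sum_k 2\pi\lambda_k a_k$, taking the infimum on the right recovers precisely $\Upsilon(\Sigma,2\pi\varphi)$ and completes the proof.

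The main obstacle — and the only step where the Fano hypothesis is truly used — is the nonvanishing of the Gromov--Witten invariant $\Phi_A([\pt],\alpha_2,\ldots,\alpha_k)$. Without the Fano assumption, stable maps in class $A$ could degenerate into bubble trees whose component areas are smaller than $\omega(A)$, the moduli problem could be obstructed, and the naive geometric count for $J_0$ need not survive the deformation to a generic $J$. Ampleness of the anticanonical class (equivalently, the existence of a monotone polytope with the same fan as $\Delta$) is what guarantees the relevant positivity of $c_1$ on effective classes and allows a Batyrev-type computation in the quantum cohomology of $X_\Delta$ to produce the required nonvanishing. This is the core technical input of Lu's argument in \cite{Lu}.
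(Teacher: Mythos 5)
This statement is a direct quotation of \cite[Theorem 1.2]{Lu}; the paper does not prove it, so there is no internal argument to compare against. That said, your outline does track the overall strategy of Lu's actual proof, namely: produce a curve class $A$ with $\omega(A)\leq\Upsilon(\Sigma,2\pi\varphi)$ and a nonvanishing one-point Gromov--Witten invariant, then apply the surjectivity criterion of Proposition~\ref{thm ub jholom}. But the crucial step --- nonvanishing of $\Phi_A([\pt],\alpha_2,\ldots,\alpha_k)$ for the minimizing class --- is asserted rather than established, and your heuristic for it has gaps. The claim that for the integrable $J_0$ ``each nonnegative relation gives rise to a one-parameter family of $T$-invariant rational curves'' that ``translating by the torus action'' sweeps out $X_\Delta$ does not stand up: a $T$-invariant curve is by definition fixed (as a set) by $T$, so torus translation does not move it, and an isolated invariant curve such as a wall curve $V(\tau)$ certainly does not cover the manifold. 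What you would actually need is a positive-dimensional family of (generically non-invariant) curves in class $A$ through a generic point together with a transversality/compactness statement, and establishing this is exactly the content of the quantum-cohomology input in \cite{Lu} (via results of Batyrev and Givental on the quantum ring of a toric Fano). Waving at ``a Batyrev-type computation'' does not close the gap.

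Two smaller points. First, the opening assertion that ``the effective cone of curves \ldots\ is generated by classes dual to nonnegative integral relations'' is not correct as stated: nonnegative relations do give effective classes (they admit parametrizations in Cox coordinates), but not every effective class arises this way --- the exceptional curve on a toric blow-up is a standard counterexample, since some $a_k$ is negative. The containment goes the other way, and that is all the argument actually needs, so this is a fixable misstatement rather than a fatal one. Second, Fano-ness (positivity of $c_1$ on effective classes) controls the \emph{number} of bubble components and supplies the monotonicity used for transversality, but it does not by itself ``prevent bubbling''; ruling out bubbling for the minimizing class requires comparing its area to that of \emph{all} effective classes, not only the nonnegative relations, and that comparison is not made in your sketch.
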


Not all polygon spaces are toric and Fano. Some of the not Fano ones can be obtained from toric Fano 
manifold by a sequence of toric blow ups. In these situations we can apply another theorem of Lu.

\begin{theorem}\cite[Theorem 6.2]{Lu}\label{theorem LuBlowupUpperBounds}
Let $X_{\widetilde{\Sigma}}$ be a toric manifold obtained from a toric Fano manifold $X_{\Sigma}$ by a sequence of blow ups at toric 
fixed points. Then the generators of $1$-dim cones of associated fans satisfy $G(\Sigma) \subset G(\widetilde{\Sigma})$. Moreover any strictly convex support function $\varphi$ for $\Sigma$ is also strictly convex for $\widetilde{\Sigma}$ and it holds that the Gromov width of $(X_{\widetilde{\Sigma}}, 2 \pi \varphi)$ is not greater than $\Upsilon(\Sigma,2 \pi \varphi)$.
\end{theorem}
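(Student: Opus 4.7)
The plan is to establish the three claims in the order stated. For the inclusion $G(\Sigma) \subset G(\widetilde{\Sigma})$, I would argue combinatorially: a single toric blow-up at a fixed point $p \in X_\Sigma$ is a star subdivision of the maximal cone $\sigma \in \Sigma$ dual to $p$. Smoothness of $X_\Sigma$ forces $\sigma$ to be generated by a $\Z$-basis $v_1, \ldots, v_n$, and the star subdivision inserts a single new ray $v_0 = v_1 + \cdots + v_n$ while subdividing $\sigma$ into $n$ maximal cones without deleting any existing ray. Iterating over the given sequence of blow-ups gives the claimed inclusion.

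For the claim about $\varphi$, on each new maximal cone of $\widetilde{\Sigma}$ contained in a subdivided $\sigma$ I would define $\varphi$ by the unique linear function that represented it on $\sigma$; upper convexity is then inherited from $\Sigma$. The condition that restrictions to distinct maximal cones are distinct linear functions can fail only across the new cones arising from a common $\sigma$, and this is handled by a mild perturbation lowering $\varphi(v_0)$ strictly below $\varphi(v_1)+\cdots+\varphi(v_n)$, with the perturbation taken to zero at the end. A direct combinatorial observation is then crucial: every $\Z_{\geq 0}$-relation $\sum_{u\in G(\Sigma)} a_u u = 0$ extends to $\widetilde{\Sigma}$ by setting $a_u = 0$ on the new rays, and the value $\sum \varphi(u) a_u$ is preserved. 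This yields $\Upsilon(\widetilde{\Sigma}, \varphi) \leq \Upsilon(\Sigma, \varphi)$.

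It remains to bound the Gromov width of $(X_{\widetilde{\Sigma}}, 2\pi\varphi)$ by $\Upsilon(\widetilde{\Sigma}, \varphi)$. When $X_{\widetilde{\Sigma}}$ happens to be Fano, Theorem \ref{theorem LuFanoUpperBounds} applies directly. Otherwise I would appeal to Proposition \ref{thm ub jholom} using a class $\widetilde{A} \in H_2(X_{\widetilde{\Sigma}};\Z)$ obtained as the proper transform, under the blow-down $\pi: X_{\widetilde{\Sigma}} \to X_\Sigma$, of a curve class $A$ realizing $\Upsilon(\Sigma,\varphi)$ on the Fano base. Non-vanishing of $\Phi_A([\pt])$ on $X_\Sigma$ should transfer to non-vanishing of $\Phi_{\widetilde{A}}([\pt])$ on $X_{\widetilde{\Sigma}}$ by choosing an $\omega$-compatible almost complex structure for which $\pi$ is holomorphic, so that every $J$-holomorphic curve in class $A$ through a generic point lifts to one in class $\widetilde{A}$. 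Surjectivity of $ev_J$ then gives the bound $\omega(\widetilde{A}) = \sum_k 2\pi \varphi(u_k) a_k$.

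The main obstacle is precisely this last transfer of non-vanishing Gromov--Witten invariants. The issue is controlling bubbling into the exceptional divisors: a $J$-holomorphic representative of $\widetilde{A}$ through a generic point could a priori split off components $\widetilde{A} - kE$ plus $k$ exceptional lines. Since $X_{\widetilde{\Sigma}}$ need not be semipositive in general, ruling this out cleanly typically requires either virtual techniques or a direct toric-geometric analysis of the compactified moduli space, exploiting that $\widetilde{A}$ is characterized by a minimal primitive relation pulled back from $\Sigma$.
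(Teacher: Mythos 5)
The paper does not prove this statement; it cites it as Theorem~6.2 of Lu \cite{Lu} and only remarks on a $2\pi$ typo in Lu's original formulation. So there is no in-paper proof to compare your attempt against, and the review below is of your proposal on its own terms.

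Your first step (the ray inclusion $G(\Sigma)\subset G(\widetilde\Sigma)$ via star subdivision by $v_0=v_1+\cdots+v_n$) is correct. Your second step already flags a genuine subtlety: with the definition of strict convexity as stated in Section~\ref{sec ub toric}, the piecewise-linear function $\varphi$ built from $\Sigma$ literally fails condition~(ii) on the refined fan $\widetilde\Sigma$, since its restrictions to the new maximal cones inside a subdivided $\sigma$ coincide. Your remedy of perturbing $\varphi(v_0)$ downward and passing to a limit is plausible, but to make it rigorous you would need to argue that the Gromov width is upper semi-continuous in this one-parameter family of Kähler forms (so that the bound survives the limit $\epsilon\to 0$); as written this is asserted rather than proved.

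The serious gap is the one you acknowledge: the transfer of a nonvanishing Gromov--Witten invariant from $X_\Sigma$ to $X_{\widetilde\Sigma}$. Choosing a compatible $J$ on $X_{\widetilde\Sigma}$ for which $\pi$ is holomorphic gives you information about one almost complex structure, but Proposition~\ref{thm ub jholom} needs surjectivity of $ev_J$ for a dense open set of $\omega$-compatible $J$; passing from one special $J$ to a generic one is exactly where one needs a well-defined, deformation-invariant count, hence GW theory. Since $X_{\widetilde\Sigma}$ need not be semipositive once $\dim_{\mathbb R}\geq 8$ and the blow-up is nontrivial, the classical definition of $\Phi_{\widetilde A}([\pt])$ is unavailable, and your appeal to ``positivity of intersections'' to forbid bubbling into the exceptional divisors does not hold in higher dimensions (it is a codimension-two phenomenon, dominant in dimension~4 but not above). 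So the argument, as it stands, does not close. To actually prove the statement one should consult Lu's proof of Theorem~6.2, which does not hinge on re-establishing GW nonvanishing on the blow-up in the way you sketch; filling your gap with ``virtual techniques or a direct toric-geometric analysis'' is a restatement of the problem rather than a solution.
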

Note the typo in \cite{Lu}: there is an extra $2\pi$ appearing in his formulation of the above theorem.

\section{Polygon spaces}\label{section polygon spaces}
The moduli space  $\M(r)$, $r \in \R_+^n$, $n \geq 4$, of closed spatial polygons is the space of closed 
piecewise linear paths in $\R^3$ with the $j$-th step
of length $r_j$, modulo rigid motions in $\R^3$ (i.e. rotations and translations). 
The space $\M(r)$ inherits a symplectic structure by means of symplectic reduction, as we describe below.

For any choice of $n$ positive real numbers $r= (r_1, \ldots, r_n) \in \R^n_+,$ $n \geq 4$,
let $(S^2_{r_i},\omega_i) $ be the sphere in $\R^3$ of radius $r_i$ and center the origin,
equipped with the symplectic volume form. The product
$$\mathcal{S}_r = \Big(\prod_{i=1}^n S_{r_i}^2, \omega= \sum_{i=1}^n \frac{1}{r_i}
p_i^* \omega_i \Big),$$
where $p_j\colon \prod_{i=1}^n S_{r_i}^2
\rightarrow S_{r_j}^2 $ is the projection on the $j$-th factor, is a compact
smooth symplectic manifold.

The group $SO(3)$ acts diagonally on $\mathcal{S}_r$ via the coadjoint action
(thinking of each sphere $S_{r_i}^2$ as of a $SO(3)$-coadjoint orbit).
This action is Hamiltonian with moment map 
\begin{displaymath}
\begin{array}{rcl}
\mu : \mathcal{S}_r& \rightarrow &\mathfrak{so}(3)^* \simeq \R^3\\
(\overrightarrow{e}_1, \ldots, \overrightarrow{e}_n) & \mapsto & \overrightarrow{e}_1 + \cdots + \overrightarrow{e}_n.\\
\end{array}
\end{displaymath}
The symplectic quotient
$$\M(r):= \mathcal{S}_r  / \!\! /_0 SO(3) = \mu^{-1}(0) / SO(3)$$ 
is the space of $n$-gons of fixed side lengths  $r_1,\ldots, r_n$ modulo 
rigid motions, and is usually called polygon space.
When it generates no confusion we will use the name polygon for both: 
an element in $ \mu^{-1}(0)$ and its class in $\M(r)$.

Note that if $n=1$ then the closing condition cannot be satisfied, if 
$n=2$ then $\M(r)$ is either empty or a point,
depending on whether $r_1=r_2$ or not, and if $n=3$ then $\M(r)$ is either 
empty or a point, depending on whether $r_1,r_2,r_3$ satisfy a triangle inequality.
In our study of the Gromov width of polygon spaces we omit these trivial cases
and assume that $n \geq 4$.

A polygon is \emph{degenerate} if it lies completely on a line.
The moduli space $\M(r)$ is a smooth manifold if and only if the lengths vector 
$r$ is \emph{generic}, i.e. for each $I \subset \{ 1, \ldots, n \},$ the quantity 
$$ \epsilon_I(r):= \sum_{i \in I} r_i - \sum_{i \in I^c} r_i$$ is non-zero.
Equivalently, $r$ is generic if and only if in $\M(r)$ there are no degenerate polygons. In
fact, if there exists a polygon $P$ on a line (or an index set $I$ such that
$\epsilon_I(r)=0$) then its stabilizer is $S^1 \subset SO(3)$ since the polygon $P$ is fixed
by rotations around the axis it defines. Therefore the $SO(3)$-action on
$\mu^{-1}(0)$ is not free and the quotient, $\mu^{-1}(0)/SO(3)$, has singularities.
Note that, for $r$ generic, the polygon space $\M(r)$ inherits a symplectic form
by symplectic reduction.
Observe moreover that for any permutation $\sigma \in S_n$, the manifolds $\M(r)$ and $\M(\sigma(r))$ are symplectomorphic.

For any $r$ generic, an index set $I$ is said to be {\bf short} if $\epsilon_I(r)<0$, and {\bf long}
if $\epsilon_I(r)>0$, i.e. if its complement is short. 
An index set $I$ is {\bf maximal short} if it is short and maximal with respect
to the inclusion on the collection of short sets for $r$, i.e. any index set containing $I$ as a non-trivial subset is long.

In \cite{HausmannKnutson}, Hausmann and Knutson prove that polygon spaces
are also realized as symplectic quotients of the Grassmannians $Gr(2,n)$ of $2$-planes in $\C^n$, obtaining
the Gelfand--MacPherson's correspondence. The construction goes as follows. 
Let $U(1)^n$ be the maximal torus of diagonal
matrices in the unitary group $U(n)$ and consider 
the action by conjugation of $U(1)^n \times U(2) \subset U(n) \times U(2)$ on
$\C^{2n}$. As the diagonal circle $U(1) \subset U(1)^n \times U(2)$ acts trivially, let us consider
the effective action of  $K := \big(U(1)^n \times U(2)\big) / U(1)$ on $\C^{2n}$.
Let $q=(q_1, \ldots, q_n)$, with  $q_i= (c_i, d_i)^t \in \C^2$, denote the coordinates in $\C^{2n}$.
The Hamiltonian action of $K$ on $\C^{2n}$ 
$$q \cdot  [e^{i\theta_1}, \ldots, e^{i\theta_n}, A]= (A^{-1} q_1 e^{i\theta_1}, \ldots, A^{-1} q_n e^{i\theta_n}),$$
with $(e^{i\theta_1}, \ldots, e^{i\theta_n}, A)\in U(1)^n \times U(2)$,
has moment map
\begin{equation} \label{eq:mommap}
 \begin{array}{rcl}
\mu: \C^{2n} & \rightarrow &  \big( \mathfrak{u}(1)^n \big)^* \oplus  \mathfrak{su}(2)^*\\ 
q & \mapsto &  \Big(\frac{1}{2} |q_1|^2, \ldots, \frac{1}{2} |q_n|^2 \Big) \oplus  \displaystyle\sum_{i=1}^n (q_i q_i^*)_0,
\end{array}
\end{equation}   
 where $(q_i q_i^*)_0$ denotes the traceless part: $(q_i q_i^*)_0=q_i q_i^*- \textrm{Trace} (q_i q_i^*) \cdot Id.$

The polygon space $\M(r)$ is then symplectomorphic to the symplectic reduction 
of $\C^{2n}$ by $K$:
$$\M(r) = \C^{2n} \big/ \!\!\big/_{\!(r,0)} K$$
(cf \cite{HausmannKnutson}). 
In fact, performing the reduction in stages and 
taking first the quotient by $U(1)^n$ at the $r$-level set, one obtains 
the product of spheres $\mathcal{S}_r$. The residual $U(2)/U(1) \simeq SO(3)$ action is the
coadjoint action described above, and one recovers the 
polygon space $\M(r)$ as the symplectic quotient $\mathcal{S}_r/\!\!/_{\!0}
SO(3).$ 

Performing the reduction in stages in the opposite order leads to the
Gelfand--MacPherson correspondence. One first obtains the
Grassmannian $Gr(2,n)$ of complex planes in $\C^n$ as 
the reduction $ \C^{n \times 2} /\!\!/_{\!0} U(2)$. Then the quotient of $Gr(2,n)$ by the
residual $U(1)^n / U(1)$ action is isomorphic to the moduli space
of $n$ points in $\C\P^1$, cf. \cite{gm}, and is also isomorphic to the 
polygon space $\M(r)$, see \cite{klyachko, km}. This is summarized in the following diagram:
$$ \xymatrix{
 {}& \C^{n \times 2} \ar[dl]_{U(2)} \ar[dr]^{U(1)^n}& {}\\
 Gr(2,n) \ar[dr]_{U(1)^n / U(1)}  & {} &  \prod_{j=1}^n S_{r_j}^2
\ar[dl]^{U(2)/U(1) \simeq SO(3)} \\
{}& \M(r)   & {}
} $$

The chambers of regular values in the moment polytope $\Xi:= \mu_{U(1)^n} (Gr(2,n))$
are separated by walls $W_I=\{r\,|\,\epsilon_I (r)=0\}$ of critical values.
Note that an index set $I$ and its complement $I^c$ determine the same wall, and if $I$ 
has cardinality $1$ or $n-1$, then the associated wall $W_I$ is an external wall. 
A chamber $\mathcal C$ is called external if it contains in its closure an external wall. 
In particular, if $r$ is in an external chamber, then there is 
a maximal short index set $I$ of cardinality one. In this case the polygon space
$\M(r)$ is diffeomorphic to the projective space $\C\P^{n-3}$, \cite{Mandini}.

It is interesting to notice (though we will not use that in this work) that two polygon spaces are diffeomorphic if and only if their integral cohomology rings are isomorphic. This follows from \cite[Theorem 3]{FHS}.

\subsection{Bending action}\label{section bending}
Let $r\in \R^n_{+}$ be generic. For any polygon $P$ in $\M(r)$
of edges $\overrightarrow{e}_1, \ldots, \overrightarrow{e}_n$ and vertices $v_1, \ldots, v_n$, consider 
the system of $n-3$ non-intersecting 
diagonals $\overrightarrow{d}_1, \ldots, \overrightarrow{d}_{n-3}$ from the first vertex to the remaining
non-adjacent vertices, i.e. $\overrightarrow{d}_i (P)= \overrightarrow{e}_1 + \cdots + \overrightarrow{e}_{i+1}$. 
Following Nohara--Ueda \cite{nu}, we call this system of diagonals {\bf caterpillar system}.
The lengths of the $n-3$ diagonals
\begin{equation}\label{eq:lengths}
  \begin{array}{rcl}
(d_1, \ldots, d_{n-3}): \M(r) &\rightarrow & \R^{n-3}\\
P & \mapsto & (|\overrightarrow{d}_1 (P)|, \ldots,  |\overrightarrow{d}_{n-3} (P)|)
 \end{array}
\end{equation}
are
continuous functions on $\M(r)$ and are smooth on the subset where they are not zero. 
Their image is a convex polytope in $\R^{n-3}$, which we denote by $\Delta$, consisting of points 
$(d_1, \ldots, d_{n-3}) \in \R^{n-3}$
that satisfy the following triangle inequalities
\begin{equation}\label{eq:triangle ineq.}
\begin{aligned}
r_{i+2} &\leq d_i+ d_{i+1}\\
d_i &\leq r_{i+2}+ d_{i+1}\\
d_{i+1} &\leq r_{i+2} + d_i
\end{aligned}
\end{equation}
where $i=0,\ldots, n-3$ and we use the notation $d_0=r_1$, $d_{n-2}=r_n$.
The functions $d_i$ give rise to
Hamiltonian flows, called bending flows, (cf \cite{klyachko,km}). The circle action
associated with a given diagonal $\overrightarrow{d}_i$ is defined on the dense open subset $\{d_i \neq 0\} \subset \M(r)$ in the following way. The first $i+1$ edges
bend along the diagonal $\overrightarrow{d}_i$ at a constant speed while the remaining
edges do not move. 
Putting together the actions coming from $(n-3)$ non-intersecting diagonals we obtain a toric action of $(S^1)^{n-3}$ on the dense open subset 
$\{d_i \neq 0, i=1,\ldots, n-3\} \subset \M(r)$.
The action-angle coordinates
are given by the lengths $d_i, \ldots, d_{n-3}$ and the angles of rotation respectively.
If each $d_i$ does not vanish on $\M(r)$ then $\M(r)$ is a symplectic toric manifold.
In this case, the moment map is given by \eqref{eq:lengths}
and the moment polytope is $\Delta$ described by inequalities \eqref{eq:triangle ineq.}.

The choice of another system of $n-3$ non-intersecting diagonals gives rise 
to a different Hamiltonian system on an open
dense subset of $\M(r)$ (possibly the whole $\M(r)$). These actions were investigated in \cite{nu}, where it is shown that 
any bending action on polygon spaces is induced by an integrable system of Gelfand-Cetlin type
on the Grassmannian $Gr(2,n)$.

\section{Projective spaces}\label{section projective}

In this section we analyze the Gromov width of $\M(r)$ in the cases when $\M(r)$ is diffeomorphic to $\C \P ^{n-3}$, $n \geq 4$, 
and prove Theorem \ref{theo:projective}.

We assume that $r \in \R_+^n$ is generic (so $\M(r)$ is a smooth manifold) and
that $r_1 \leq r_2 \leq \ldots \leq r_n$.
As shown in \cite[Proposition 4.2]{Mandini}, $\M(r)$ is diffeomorphic to $\C \P ^{n-3}$
if and only if there is a maximal short set of cardinality 1. Using the assumption that the $r_i$'s are ordered non-decreasingly,
this is equivalent to 
 $\{1, n\}$ being long, i.e., $2r_1 > \gamma:= ( \sum_{j=1}^{n-1} r_j ) - r_n$.

\begin{proposition}\label{volume}
 Let $r$ be generic, ordered non-decreasingly and such that $\{1,n\}$ is long.
Then the symplectic volume of $\M(r)$ is 
$$
 \frac{(2 \pi)^{n-3}}{(n-3)!}  \gamma^{n-3}
$$
\end{proposition}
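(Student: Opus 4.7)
The plan is to use the caterpillar bending action from Section \ref{section bending} together with the Duistermaat--Heckman theorem. On the open dense subset $U\subset \M(r)$ where all $n-3$ caterpillar diagonals are nonzero, the maps $d_1,\dots,d_{n-3}$ form the moment map of an effective Hamiltonian $T^{n-3}$-action, with image the convex polytope $\Delta\subset \R^{n-3}$ carved out by the triangle inequalities \eqref{eq:triangle ineq.}. Because $\M(r)\setminus U$ is a union of lower-dimensional strata and hence has Liouville measure zero, Duistermaat--Heckman reduces the computation of the symplectic volume of $\M(r)$ to a computation of the Euclidean volume of $\Delta$, up to a dimensional constant depending only on the normalisation of $\omega$.

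The core step is to show that, under the hypothesis $\{1,n\}$ long (equivalently $r_1+r_n>\sum_{i=2}^{n-1}r_i$, since $r_1\le \cdots\le r_n$), the polytope $\Delta$ is an $(n-3)$-simplex of volume $\gamma^{n-3}/(n-3)!$. The longness assumption forces the ``right-tail'' lower bounds $d_i\ge r_n-\sum_{j=i+2}^{n-1}r_j$ to be the binding lower bounds on each $d_i$, leaves the single left-tail upper bound $d_1\le r_1+r_2$ active, and makes the consecutive-triangle inequalities $d_{i+1}\le d_i+r_{i+2}$ the only other active facets; every other inequality in \eqref{eq:triangle ineq.} can be checked to be redundant. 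After the unimodular change of coordinates
$$\tilde d_i \;:=\; d_i-\Bigl(r_n-\sum_{j=i+2}^{n-1}r_j\Bigr),$$
the polytope $\Delta$ becomes the order simplex $\{0\le \tilde d_{n-3}\le \tilde d_{n-4}\le\cdots\le \tilde d_1\le \gamma\}$, whose Euclidean volume is $\gamma^{n-3}/(n-3)!$. I would first verify this explicitly in the low-dimensional cases $n=4,5,6$ to fix the book-keeping and then promote the argument to arbitrary $n$ by induction on the $i$-th triangle inequality in \eqref{eq:triangle ineq.}.

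Combining the two steps yields the stated formula, with the prefactor $(2\pi)^{n-3}/2$ coming from Duistermaat--Heckman together with the particular normalisation $\omega=\sum (1/r_i)\,p_i^*\omega_i$ used in Section \ref{section polygon spaces}. The main obstacle is the redundancy analysis identifying $\Delta$ as a simplex: there are $3(n-2)$ triangle inequalities coupling consecutive diagonals, so verifying that only $n-2$ of them remain active under the longness of $\{1,n\}$ requires careful case-work that relies on both the ordering $r_1\le\cdots\le r_n$ and on the strict inequality $r_1+r_n>\sum_{i=2}^{n-1}r_i$. An alternative, less combinatorial route is to invoke Theorem \ref{theo:projective} (which is proved independently in this section) to identify $\M(r)$ symplectically with $(\C\P^{n-3},2\gamma\omega_{FS})$ and then apply the standard volume formula for scaled complex projective spaces.
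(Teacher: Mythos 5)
Your strategy is genuinely different from the paper's main proof.  The paper invokes a closed combinatorial formula for $\operatorname{vol}\M(r)$ (from Mandini's earlier work) and then shows by an inclusion--exclusion computation that all contributions from long sets $I\neq\{1,\dots,n-1\}$ cancel, leaving $-C\gamma^{n-3}$.  You instead use the caterpillar bending action and reduce the computation to the Euclidean volume of the image polytope.  This is essentially the ``alternative way'' the paper itself sketches in the Remark immediately after the Proposition, where the authors write an explicit unimodular map $F$ sending the standard simplex $\Delta^{n-3}(\gamma)$ onto $\Delta$.  Your translation $\tilde d_i = d_i - (r_n-\sum_{j\ge i+2}^{n-1}r_j)$ mapping $\Delta$ to the order simplex $\{0\le\tilde d_{n-3}\le\cdots\le\tilde d_1\le\gamma\}$ is a clean alternative to their $F$, and your identification of the active facets $d_1\le r_1+r_2$, $d_{n-3}\ge r_n-r_{n-1}$, and $d_{i+1}\le d_i+r_{i+2}$ is correct; the redundancy of the remaining inequalities in \eqref{eq:triangle ineq.} does go through under the hypotheses $r_1\le\cdots\le r_n$ and $\gamma<2r_1$, as you anticipate.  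What your approach buys, compared to the paper's, is geometric transparency: it directly exhibits the symplectomorphism type, whereas the paper's argument is purely formal.  What the paper's argument buys is that it never needs the bending system to be defined (or the polytope analysis), and it makes no claim about the moment image being a simplex.

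There are two genuine gaps.  First, the multiplicative constant.  Applying Duistermaat--Heckman to a density-one toric chart with action--angle coordinates $(d_1,\dots,d_{n-3})\in\Delta$, $\theta\in(0,2\pi)^{n-3}$ gives Liouville volume $(2\pi)^{n-3}\operatorname{vol}_{\text{eucl}}(\Delta)=(2\pi)^{n-3}\gamma^{n-3}/(n-3)!$.  You assert that the stated prefactor $(2\pi)^{n-3}/2$ ``comes from Duistermaat--Heckman together with the particular normalisation $\omega=\sum(1/r_i)p_i^*\omega_i$,'' but nothing in your outline actually produces the extra $\tfrac12$; the lengths $d_i$ are already the Hamiltonians of period-$2\pi$ circle actions with respect to that same $\omega$.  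To make the proof match the stated formula you would need to carefully trace the normalisation used in Mandini's volume formula (the source of the constant $C=-\tfrac{(2\pi)^{n-3}}{2(n-3)!}$) and show how it relates to the bending moment map, rather than waving at it.  Second, the suggested ``alternative, less combinatorial route'' of invoking Theorem~\ref{theo:projective} is circular: the paper derives Theorem~\ref{theo:projective} from precisely this Proposition (together with Mandini's diffeomorphism result), so it is not available as an independent input here.  If you wanted to pursue that route you would have to first prove the symplectomorphism $\M(r)\cong(\C\P^{n-3},2\gamma\omega_{FS})$ directly from the moment polytope (which the paper's Remark indicates is possible), and only then deduce the volume.
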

This proposition, together with  \cite[Proposition 4.2]{Mandini} recalled above, proves Theorem \ref{theo:projective}:
as the volume of the ball of dimension $2n-6$ and radius $\sqrt{2 \gamma}$ is equal to 
$\frac{\pi^{n-3}}{(n-3)!}(\sqrt{2 \gamma})^{2n-6}$,
Proposition \ref{volume} shows that the Gromov width of the above $\M(r)$ is $2 \pi \gamma$, which in this case is exactly $\rho(r)$.
\begin{proof}
From \cite[Section 2.5.1, page 210]{Mandini}, we know that the symplectic volume of the polygon space
$\M(r)$ is given by
$$
vol \, \M(r) = C \sum_{(k_1, \ldots, k_n) \in K} \binom{n-3}{k_1, \ldots, k_n} r_1^{k_1}
 \cdots r_n^{k_n} \sum_{I \text{ long}} (-1)^{n-|I|} (\lambda_I^1)^{k_1} \cdots
 (\lambda_I^n)^{k_n}
$$
where $C= - \frac{(2 \pi)^{n-3}}{2(n-3)!}$, $K = \{( k_1, \ldots, k_n) \in
\mathbb Z^n_{\geq 0} \mid \sum_{i=1}^n k_i = n-3 \}$ and $\lambda_I^i=1$ if $i \in I$
and $\lambda_I^i=-1$ if $i \notin I$.
%
%
%

Let us analyze the contributions to the
coefficient of a generic element 
$r_{i_1}^{k_{i_1}} \cdots r_{i_l}^{k_{i_l}}$,
for some $l=1, \ldots, n-3$.
Note first that the contribution coming from the long set $I=\{1, \ldots, n-1\}$ is
\begin{equation}\label{first contribution}
C (-1)\cdot (-1)^{k_{n}}.
\end{equation}

Any long set $I$ different from $\{1, \ldots, n-1\}$ contains $n$. 
The index sets $I$ that contain $n$ and $ i_1, \ldots, i_l$ contribute by
\begin{equation*}
  C\! \!\sum_{\stackrel{I \text{ long}}{\{i_1, \ldots, \, i_l,n\} \subset I} } (-1)^{n-|I|} (\lambda_I^1)^{k_1} \cdots
 (\lambda_I^n)^{k_n} =C \! \!\sum_{\stackrel{I \text{ long}}{\{i_1, \ldots, \, i_l,n\} \subset I} } (-1)^{n-|I|} $$
 $$=\left\{ \begin{array}{ll}
 C\sum_{j=0}^{n-1-l} (-1)^{n-j-l-1} \binom{n-1-l}{j} & \textrm{ if }n\notin \{i_1,\ldots,i_l\} \\
 & \\
C \sum_{j=0}^{n-l} (-1)^{n-j-l} \binom{n-l}{j} & \textrm{ if }n\in \{i_1,\ldots,i_l\}.
 \end{array} \right.
\end{equation*}
The right hand side can be rewritten as
$$ C (-1)^{n-l-1} \sum_{j=0}^{n-1-l} (-1)^{j} \binom{n-1-l}{j}= C (-1)^{n-l-1} (1-1)^{n-1-l}=0,$$
in the first case, and similarly as $$C (-1)^{n-l} (1-1)^{n-l}=0$$ in the second case.

Long sets $I$ that contain $n$ and $l-1$ elements in $\{ i_1, \ldots, i_l\}$ contribute, up to a sign, by
\begin{equation*}
 \left\{ \begin{array}{ll} 
          C\sum_{j=0}^{n-1-l} (-1)^{n-j-l} \binom{n-1-l}{j} = C(-1)^{n-l} (1-1)^{n-1-l}=0 &  \text{if } n \notin \{i_1,\ldots,i_l\} \\
          & \\
         C\sum_{j=0}^{n-l} (-1)^{n-j-l+1} \binom{n-l}{j}=  C(-1)^{n-l+1} (1-1)^{n-l}=0 &\text{if } n\in \{i_1,\ldots,i_l\}.
         \end{array} \right.
\end{equation*}

Continuing this way we reach the long sets $I$ that contain $n$ and \emph{exactly one} element of $\{ i_1, \ldots, i_l\}$. 
If $n \notin \{i_1,\ldots,i_l\} $, then any set $I$ containing $n$ and one element of $\{ i_1, \ldots, i_l\}$ is necessarily long, 
and the contribution of such sets is still $0$.
If $n\in \{i_1,\ldots,i_l\}$, 
then the set $I$ containing $n$ and exactly one element of $\{ i_1, \ldots, i_l\}$ is long if and only if $\{i_1, \ldots, \, i_l\} \cap I=\{n\}$.
The contribution of these long sets $I$ is
$$
  C\!\!\!\!\!\!\!\!\sum_{\stackrel{I \text{ long}}{\{i_1, \ldots, \, i_l\} \cap I=\{n\}} } (-1)^{n-|I|}(-1)^{k_{i_1}+ \ldots+ k_{i_l}-k_n}=
  C\!\sum_{j=1}^{n-l}(-1)^{n-3-k_n} (-1)^{n-j-1}\binom{n-l}{j}$$
  $$=C(-1)^{k_n}((1-1)^{n-l}-1)=-C(-1)^{k_n}.
$$

We are left with analyzing the contributions from the sets $I$ that contain $n$ and no element of $\{ i_1, \ldots, i_l\}$.
This is possible only if $n \notin \{ i_1, \ldots, i_l\}$, in which case $k_n=0$. The contribution of such sets is
$$C\!\!\!\!\!\!\!\!\sum_{\stackrel{I \text{ long}}{i_1, \ldots, \, i_l \notin I,\ n\in I} } 
(-1)^{n-|I|} (-1)^{k_{i_1}+ \ldots+ k_{i_l}}
=C(-1)^{n-3}\sum_{j=1}^{n-1-l} (-1)^{n-j-1} \binom{n-1-l}{j}$$
$$=C(-1)^{2n-4}(0-1)=-C=-C(-1)^{k_n}.$$
Note that now the summation is starting at $j=1$ as $\{n\}$ is not long.

This way we proved that the contribution to the
coefficient of a generic element 
$r_{i_1}^{k_{i_1}} \cdots r_{i_l}^{k_{i_l}}$, given by longs sets $I$ that are different from $\{1, \ldots, n-1\}$ is 
$$-(-1)^{k_n}\,C ,$$
regardless of whether $n\in \{ i_1, \ldots, i_l\}$ or not.
Together with equation \eqref{first contribution} this implies that
\begin{align*}
vol \, \M(r)&=-2C \sum_{(k_1, \ldots, k_n) \in
K} \binom{n-3}{k_1, \ldots, k_n} r_1^{k_1}\cdots (-r_n)^{k_n}
\\=&-2C(r_1+\ldots+r_{n-1}-r_n)^{n-3}=-2C\gamma^{n-3}=\frac{(2 \pi)^{n-3}}{(n-3)!}\gamma^{n-3}.
\end{align*}
\end{proof}

\begin{remark}
Here is an alternative way of finding the Gromov width in this case. One can show that if $r$ is generic, 
ordered non-decreasingly and $\{1,n\}$ is long, then
 the bending action coming from the caterpillar system of diagonals is toric on $\M(r)$. The moment map image is 
the set $\Delta$ determined by the following inequalities from \eqref{eq:triangle ineq.}:
 $$r_2-r_1 \leq d_1, \ d_{n-3} \leq r_n +r_{n-1},\ |r_{k+1}-d_{k-1}| \leq d_k,\ k=2,\ldots, n-3.$$
After appropriate translation, this set is $GL(n,\Z)$ -equivalent to a simplex $\Delta^{n-3}(\gamma)$, namely
 $F(\Delta^{n-3}(\gamma))=\Delta$ where $F \colon \R^{n-3} \rightarrow  \R^{n-3}$
$$F(x)=
\left[\begin{array}{ccccc}
-1 & & &  &0\\
-1&-1& &  &0\\
& & \ddots &&0\\
-1&-1&\ldots &-1&0\\
0&0& \ldots &0&1
\end{array}\right]x
+ \left( \begin{array}{c}
r_1+r_2 \\r_1+r_2+r_3 \\ \vdots \\ r_1+\ldots +r_{n-3}\\ r_n-r_{n-1}
\end{array} \right).
$$
This proves that in this case the manifold $\M(r)$ is symplectomorphic to $(\C \P^{n-3}, 2  \,(\,( \sum_{j=1}^{n-1}r_j) -r_n) \omega_{FS})$
and its Gromov width is $2 \pi  \,(\,(\sum_{j=1}^{n-1}r_j)-~r_n)$.
  \end{remark}

\subsection{Gromov width of $4$-gons.}
Let $r \in \R^4_+$ be generic and without loss of generality assume that the lengths are non-decreasingly ordered. 
On $\M(r)$ consider the bending action along the diago\-nal $\overrightarrow{d}= \overrightarrow{e_1} + \overrightarrow{e_2}$.
The diagonal $\overrightarrow{d}$ does not vanish if 
$r_1 \neq r_2$ or $r_3 \neq r_4$, which is always the case by the genericity assumption.
Thus the bending action is defined on the whole 
$\M(r_1,\ldots,r_4)$ making it a toric symplectic $2$-dimensional manifold. In particular Hausmann and Knutson in \cite{HausmannKnutson}
show that they are diffeomorphic to $\C\P^1$ and that
the moment map image is then the interval 
$$[\max(r_2-r_1,r_4-r_3), \min(r_1+r_2,r_3+r_4)] = [\max(r_2-r_1,r_4-r_3),r_1+r_2]$$ of 
length $ \min(2r_1,r_1+r_2+r_3-r_4)$.
Therefore the Gromov width of $\M(r)$ is $2 \pi \min(2r_1,r_1+r_2+r_3-r_4)$, as claimed in Theorem \ref{theo:projective}.

\section{Gromov width of the spaces of $5$-gons}\label{section 5gons}

In this section we analyze the Gromov width of $\M(r)$ for generic $r\in \R_+^5$.
For this purpose we use the bending action along the caterpillar system of diagonals 
starting from the first vertex, as in Figure \ref{fig:diag5}.

\begin{figure}[htbp]
\begin{center}
\psfrag{1}{\footnotesize{$e_1$}}
\psfrag{2}{\footnotesize{$e_2$}}
\psfrag{3}{\footnotesize{$e_3$}}
\psfrag{4}{\footnotesize{$e_4$}}
\psfrag{5}{\footnotesize{$e_5$}}
\psfrag{d1}{\footnotesize{$d_1$}}
\psfrag{d2}{\footnotesize{$d_2$}}
\includegraphics[width=5cm]{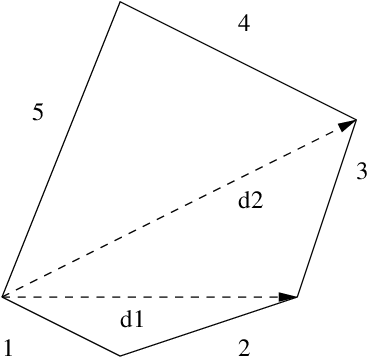}
\caption{Diagonals from the first vertex.}
\label{fig:diag5}
\end{center}
\end{figure}

Note that the caterpillar bending action on $\M(r)$ is toric if and 
only if $r_1 \neq r_2$ and $r_4\neq
r_5$. Since $\M(r)$ is symplectomorphic to $\M(\sigma(r))$ for any
permutation $\sigma \in S_5$ of the lengths vector, 
one can use this symplectomorphism to define a toric action on any $\M(r)$ with 
$r_{\sigma(1)} \neq r_{\sigma(2)} $ and $r_{\sigma(4)}\neq r_{\sigma(5)} $
 for some $\sigma \in S_5$.

The image of the bending flow functions \eqref{eq:lengths} (which is the moment map in the
toric case) is the polytope $\Delta $ in $\R^2$ given by 
the intersection of the rectangle of vertices 
$$ \begin{array}{cc}
 A=(|r_2 - r_1|, |r_5 -r_4|),&
B=(r_2 + r_1, |r_5 -r_4|),\\
C=(r_2 + r_1, r_5 + r_4),&
D=(|r_2 - r_1|, r_5+r_4)
\end{array} $$
with the non-compact region 
$$\Omega=\{(d_1,d_2) \in \R^2 \mid d_2 \geq d_1 - r_3,\, d_2 \geq -d_1 +r_3,\, d_2\leq d_1 
+r_3\}$$
as in
Figure \ref{fig:catMomentpolytope}, cf \cite{HausmannKnutson}.
\begin{figure}[htbp]
\begin{center}
\psfrag{A}{\footnotesize{$A$}}
\psfrag{B}{\footnotesize{$B$}}
\psfrag{C}{\footnotesize{$C$}}
\psfrag{D}{\footnotesize{$D$}}
\psfrag{omega}{\footnotesize{$\Omega$}}
\psfrag{x}{\footnotesize{$d_1$}}
\psfrag{y}{\footnotesize{$d_2$}}
\psfrag{r_3}{\footnotesize{$r_3$}}
 \psfrag{r_2+r_1}{\footnotesize{$r_1+r_2$}}
 \psfrag{r_2-r_1}{\footnotesize{$|r_1-r_2|$}}
 \psfrag{r_5+r_4}{\footnotesize{$r_4+r_5$}}
 \psfrag{r_5-r_4}{\footnotesize{$|r_5-r_4|$}}
\psfrag{y=x-r3}{\footnotesize{$d_2=d_1-r_3$}}
\psfrag{y=x+r3}{\footnotesize{$d_2=d_1+r_3$}}
\psfrag{y=-x+r3}{\footnotesize{$d_2=-d_1+r_3$}}
\includegraphics[width=10cm]{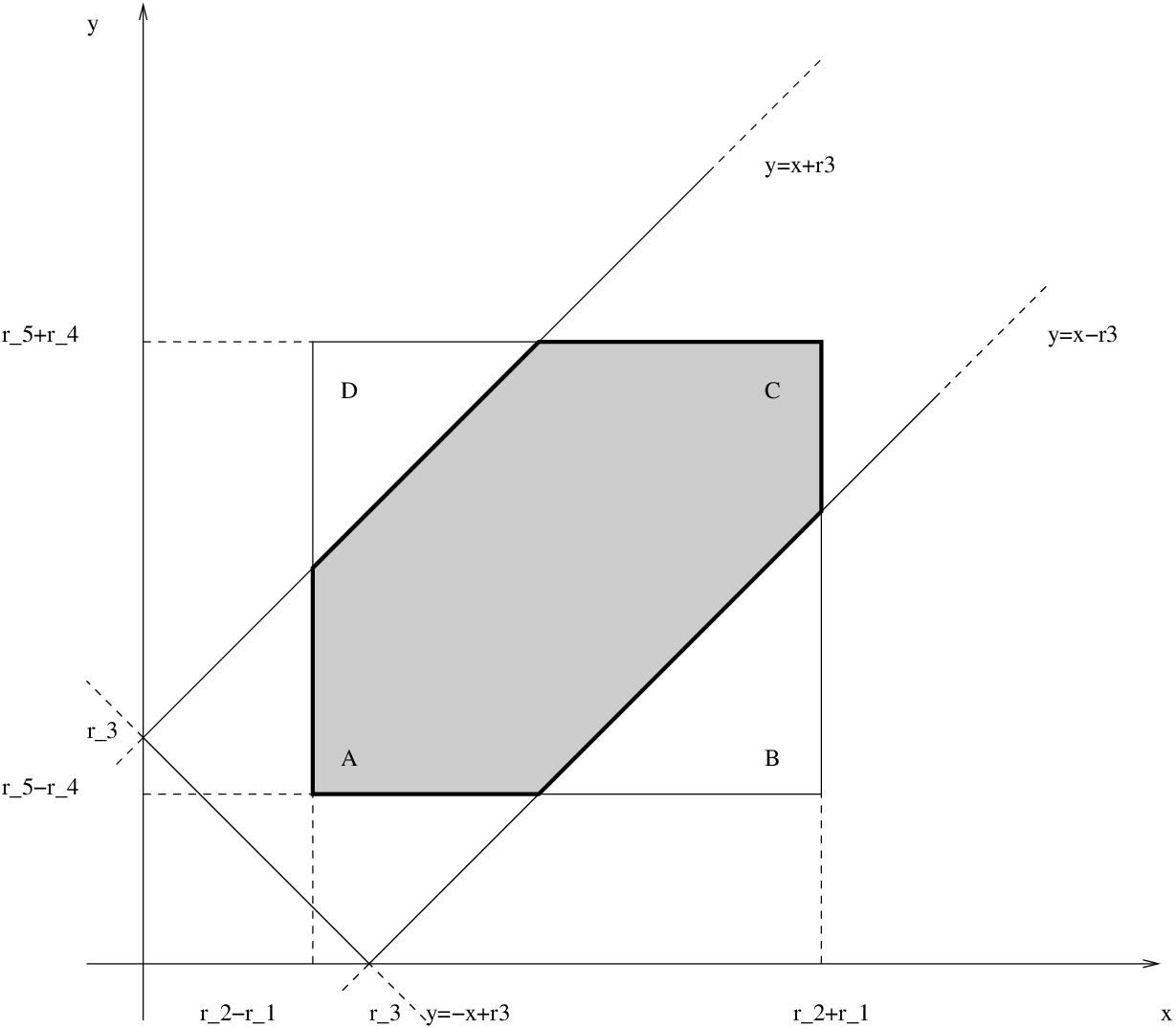}
\caption{Moment polytope for the caterpillar bending action on $\M(r)$.}
\label{fig:catMomentpolytope}
\end{center}
\end{figure}
The possible normals to the facets are
\begin{equation}\label{possible normals}
\begin{array}{llll}
u_1=(0,1), &u_2=(-1,1), &u_3=(-1,0),  &u_4=(0,-1), \\ 
u_5=(1,-1), &u_6=(1,0),  &u_7=(1,1). &
\end{array}
\end{equation}

Note that the diffeotype of $\M(r)$ is uniquely determined by the chamber $\mathcal C_r
\subset \Xi$ or, in other words, by the collection of $r$-short sets, cf. \cite{Mandini}.
Moreover, for $r$ in any fixed chamber $\mathcal{C}$, all toric $\M(r)$ have the same ``shape" of the moment map image with respect 
to the bending action along a fixed system of diagonals, i.e.  the moment polytopes have the same collection of facet normals, though the lattice lengths of the edges of the polytopes may vary.
Note that for a non-trivial reshuffling $\sigma(r)$, $\sigma \in S_5$, of the length vector $r$, $\sigma(r)$ is in a different chamber than $r$.
Nevertheless $\M(r)$ and $\M(\sigma(r))$ are symplectomorphic. The bending action along the caterpillar system of diagonals on $\M(\sigma(r))$ induces a Hamiltonian system on $\M(r)$ which may, or may not, correspond to bending along a different system of diagonals. In Section \ref{section 5gons upper}, when needed, we use reshuffling of the length vector $r$ to obtain an action defined on the whole $\M(r)$, making it a  toric manifold.
To determine the moment image, $\Delta$, the following table will be useful.
The table describes when the vertices of the rectangle $ABCD$ satisfy the inequalities defining the region $\Omega$ in the language of short sets. 
For simplicity we assume partial ordering on the length vector $r$: $r_1\leq r_2$, $r_4 \leq r_5$. All our reshuffled length vectors from Section \ref{section 5gons upper} will satisfy this partial ordering assumption.\\

\begin{tabular}{c|c|c|c}\label{5gon chart}
vertex &  $\in \{d_2 \geq d_1-r_3\} $ if & $\in \{d_2 \geq -d_1+ r_3\}$ if &  $\in \{d_2 \leq d_1+r_3\}$ if \\ \hline
$A$ &\{2,4\} is short & \{1,3,4\} is short &\{1,5\} is short\\ \hline
$B $&\{1,2,4\} is short & \{3,4\} is short &\{5\} is short\\ \hline
$C $&\{1,2\} is short & \{3\} is short & \{4,5\} is short \\ \hline
$D$ &\{2\} is short & \{1,3\} is short &\{1,4,5\} is short 
\end{tabular}\\

It is easy to see that under the assumption $0<r_1 \leq \ldots \leq r_5$, 
we can restrict our attention to the following 6 chambers:
\begin{itemize}
 \item $\mathcal C_1$, determined by the short sets:
$$\begin{array}{l}
\{i\} \quad \forall i=1, \ldots, 5,\\
 \{1,2\}, \{1,3\},\{1,4\},\{2,3\},\{2,4\},\{3,4\}, \\
\{1,2,3\},\{1,2,4\},\{1,3,4\},\{2,3,4\}.
\end{array}$$
Note that $\{5\}$ is maximal short.
For $r \in \mathcal C_1$, $\M(r)$ is diffeomorphic to $\C\P^2$.

\item $\mathcal C_2$, determined by the short sets:
$$\begin{array}{l}
\{i\} \quad \forall i=1, \ldots, 5,\\
\{1,2\}, \{1,3\},\{1,4\},\{1,5\},\{2,3\},\{2,4\},\{3,4\},\\
\{1,2,3\},\{1,2,4\},\{1,3,4\}.
\end{array}$$
For $r \in \mathcal C_2$, $\M(r)$ is diffeomorphic to $\C\P^2$ blown up at one point.

\item $ \mathcal C_3$, determined by the short sets:
$$\begin{array}{l}
\{i\} \quad \forall i=1, \ldots, 5,\\
\{1,2\}, \{1,3\},\{1,4\},\{1,5\},\{2,3\},\{2,4\},\{2,5\},\\
\{1,2,3\},\{1,2,4\},\{1,2,5\}.\end{array}$$
For $r \in \mathcal C_3$, $\M(r)$ is diffeomorphic to $\C\P^1 \times \C\P^1$.

\item $\mathcal C_4$, determined by the short sets:
$$\begin{array}{l}
\{i\} \quad \forall i=1, \ldots, 5,\\
\{1,2\}, \{1,3\},\{1,4\},\{1,5\},\{2,3\},\{2,4\},\{2,5\},\{3,4\},\\
\{1,2,3\},\{1,2,4\}.
\end{array}$$
For $r \in \mathcal C_4$, $\M(r)$ is diffeomorphic to $\C\P^2$ blown up at two points.

\item $\mathcal C_5$, determined by the short sets:
$$\begin{array}{l}
\{i\} \quad \forall i=1, \ldots, 5,\\
\{1,2\}, \{1,3\},\{1,4\},\{1,5\},\{2,3\},\{2,4\},\{2,5\},\{3,4\},\{3,5\},\\
\{1,2,3\}.
\end{array}$$
For $r \in \mathcal C_5$, $\M(r)$ is diffeomorphic to $\C\P^2$ blown up at three points.

\item $\mathcal C_6$, determined by the short sets: all $I$ with $|I|=1,2$.
For $r \in \mathcal C_5$, $\M(r)$ is diffeomorphic to $\C\P^2$ blown up at four points.

\end{itemize}

If $r \in \mathcal{C}_1$ then $\M(r)$ is symplectomorphic to $(\C \P^2, 2\gamma \omega_{FS})$ 
and thus its Gromov width is $2 \pi \gamma=2 \pi (r_1 + \cdots +r_4 - r_5) $ as we had shown in Section \ref{section projective}.
The moment map image for the caterpillar bending action on $\M(r)$, $r \in \mathcal C_1$, is presented on Figure \ref{figure case a}.
\begin{figure}
 \psfrag{r_3}{\footnotesize{$r_3$}}
 \psfrag{r_1+r_2}{\footnotesize{$r_1+r_2$}}
 \psfrag{r_3+r_1+r_2}{\footnotesize{$r_3+r_1+r_2$}}
 \psfrag{r_4+r_5}{\footnotesize{$r_4+r_5$}}
 \psfrag{r_5-r_4}{\footnotesize{$r_5-r_4$}}
 \psfrag{A}{\footnotesize{$A$}}
   \psfrag{B}{\footnotesize{$B$}}
     \psfrag{C}{\footnotesize{$C$}}
       \psfrag{D}{\footnotesize{$D$}}
       \psfrag{E}{\footnotesize{$E$}}
       \psfrag{F}{\footnotesize{$F$}}
 \includegraphics[width=9cm]{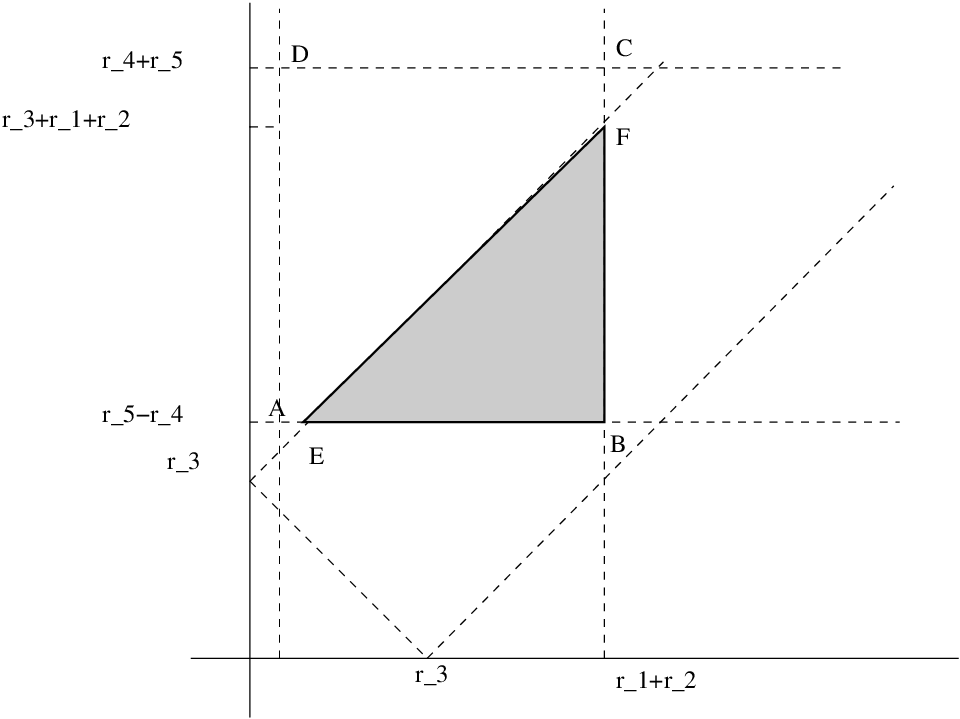}
 \caption{Moment image of $\M(r)$ for $r \in \mathcal C_1$}
\label{figure case a}
 \end{figure}

We now concentrate on the remaining chambers $\mathcal{C}_2,\ldots, \mathcal{C}_6$. Therefore for the next two subsections we assume that 
$\{1,5\}$ is short, i.e. 
$$\min \{2 r_1,\, r_1 + \cdots +r_4 - r_5\}= 2r_1 .$$

\subsection{Lower bounds}\label{section 5gons lower}
To prove the lower bound for the Gromov width of $\M(r)$ ($r$ generic) we assume
 that $r$ is ordered non-decreasingly.
Under this assumption, 
$$\min \{2 r_j,\, \big(\sum_{i \neq j} r_i\big) - r_j\,|\,
j=1,\ldots,5\} = \min \{2 r_1,\, r_1 + \cdots +r_4 - r_5\}=2r_1. $$ 
As before, consider the bending action along the caterpillar system of diagonals and let $\Delta$ be the image $(d_1,d_2)(\M(r)).$
There always exists a 
``horizontal''
segment in $\Delta$ of length $2r_1$, as we show in the next Lemma.
\begin{lemma}\label{lem:d_1}
 Let $r \in \R_+^5$ be generic, ordered non-decreasingly and such that $\{1,5\}$ is short.
 Then there exists
 $d_2^o$ s.t. $\Delta \cap \{(d_1,d_2) \in\R^2 \mid d_2= d_2^o \}$ has length
$2r_1$. 
\end{lemma}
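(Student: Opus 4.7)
The plan is to exhibit an explicit horizontal level $d_2^o$ at which the slice of $\Delta$ has length exactly $2r_1$. Observe first that the horizontal width of the rectangle $ABCD$ is $(r_1+r_2)-(r_2-r_1)=2r_1$. Hence, if I can find $d_2^o$ such that the entire horizontal strip
$$\{(d_1,d_2^o)\,|\, r_2-r_1\le d_1\le r_1+r_2\}$$
lies inside the region $\Omega$ (i.e., is not cut off by any of the three half-plane constraints defining $\Omega$), then this strip is a horizontal slice of $\Delta$ of length $2r_1$, and the lemma follows.

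Next, I would translate each of the three $\Omega$-inequalities into a pointwise condition on $d_2^o$ by evaluating at the endpoint of $[r_2-r_1, r_1+r_2]$ at which the constraint is most restrictive. The constraint $d_2\ge d_1-r_3$ is tightest at $d_1=r_1+r_2$; the constraint $d_2\ge -d_1+r_3$ and the constraint $d_2\le d_1+r_3$ are both tightest at $d_1=r_2-r_1$. Combining with the vertical extent of the rectangle, the existence of $d_2^o$ reduces to
$$\max(r_5-r_4,\ r_1+r_2-r_3,\ r_1+r_3-r_2)\ \le\ \min(r_4+r_5,\ r_2+r_3-r_1).$$

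Finally, I would verify the six resulting inequalities one by one. Most follow immediately from the non-decreasing ordering of $r$ (e.g.\ $r_1+r_3-r_2\le r_2+r_3-r_1$ is just $r_1\le r_2$, and similarly for $r_1+r_2-r_3\le r_2+r_3-r_1$). The inequality $r_5-r_4\le r_2+r_3-r_1$ is exactly the assumption that $\{1,5\}$ is short. The one that needs a small observation is $r_1+r_2-r_3\le r_4+r_5$: under the ordering $r_1\le r_2\le r_3\le r_4\le r_5$, one has $r_1+r_2\le 2r_3\le r_3+r_4+r_5$, i.e.\ $\{1,2\}$ is automatically short. This is the only potentially subtle step; once it is noted, the rest of the feasibility check is immediate. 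Any $d_2^o$ in the (nonempty) interval produced then gives the desired horizontal slice of length $2r_1$.
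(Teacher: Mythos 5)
Your proof is correct and follows essentially the same route as the paper's. Both reduce the problem to finding $d_2^o$ in a nonempty intersection of intervals: you phrase the constraints as ``each $\Omega$-inequality at its binding endpoint of the strip,'' while the paper phrases them as triangle inequalities for the triples $(d_2^o,r_4,r_5)$, $(r_1+r_2,r_3,d_2^o)$, $(r_2-r_1,r_3,d_2^o)$ — but these are the same conditions, and in both cases the only inequality that genuinely uses the hypothesis is $r_5-r_4\le r_2+r_3-r_1$, i.e.\ that $\{1,5\}$ is short.
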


\begin{proof}
 It is clear that there exists $d_2^o$ s.t. $\Delta \cap \{(d_1,d_2) \in\R^2 \mid d_2= d_2^o \}$ has length
$2r_1$ if and only if the triples $(d_2^o,r_4,r_5)$,$(r_1 + r_2, r_3, 
d_2^o)$ and 
$(r_2 - r_1, r_3, d_2^o)$ satisfy the triangle inequalities:
\begin{displaymath}
\left\{
 \begin{array}{r}
  r_4\leq r_5 + d_2^o\\
  r_5 \leq r_4 + d_2^o\\
  d_2^o \leq r_4 + r_5
 \end{array}
\right.
\quad
\text{and}
\quad
\left\{
 \begin{array}{r}
  r_1 + r_2\leq r_3 + d_2^o\\
  r_3 \leq r_1 + r_2 + d_2^o\\
  d_2^o \leq r_1 + r_2 + r_3
 \end{array}
\right.
\quad
\text{and}
\quad
\left\{
 \begin{array}{r}
  r_2 - r_1\leq r_3 + d_2^o\\
  r_3 \leq r_2 - r_1 + d_2^o\\
  d_2^o \leq r_2 - r_1 + r_3.
 \end{array}
\right.
\end{displaymath}
The last two sets of inequalities are verified if and only if
\begin{align*}
d_2^o \in &[ \mid r_1 + r_2 -r_3 \mid, r_1 + r_2 + r_3] \cap [ r_3 - r_2 +r_1, - 
r_1 + r_2 + r_3] \\
= & [ r_3 - r_2 +r_1,  - r_1 + r_2 + r_3] \neq \emptyset,
\end{align*}
while the first set gives the condition
$ r_5 -r_4 \leq d_2^o \leq r_4 +r_5.$
The intersection 
$$ [ r_3 - r_2 +r_1,  - r_1 + r_2 + r_3] \cap [r_5 -r_4, r_4 +r_5]$$
is non empty if and only if 
\begin{align}\label{eq:cases}
 r_3+r_2-r_1 \geq r_5 -r_4\\
r_3-r_2+r_1 \leq r_5 +r_4
\end{align}
The second inequality is verified as $\{ 1,3 \}$ is short.
Adding $2\,r_1$ to both sides of (\ref{eq:cases}) and reordering the terms, one 
obtains that there exists 
$d_2^o \in [ r_3 - r_2 +r_1,  - r_1 + r_2 + r_3] \cap [r_5 -r_4, r_4 +r_5]$
if and only if $$r_1+r_2+r_3+r_4-r_5 \geq 2 r_1,$$
i.e. if $\{1,5\}$ is short.
\end{proof}

Let $l_2$ be the real-valued function defined as follows 
\begin{align*}
 l_2 (d_1) &= \min (r_5+r_4, d_1+r_3) - \text{max}(r_5 - r_4,  | d_1 - 
r_3 |)\\
&=\min \big( 2r_4, r_5+r_4 - |d_1 - r_3 
|, d_1+r_3- r_5 + r_4, 2\min(d_1, r_3) \big)
\end{align*}
For $d_1^o \in [r_2-r_1, r_1+r_2]$ the function $l_2$ measures the length of the vertical segments, 
(non-empty by the above Lemma),
$\Delta \cap \{(d_1,d_2) \in\R^2 | d_1=d_1^o \}$.

\begin{lemma}\label{lem:d_2}
Let $r \in \R_+^5$ be generic, ordered non-decreasingly and such that $\{1,5\}$ is short.
Then
 there exists $d_1^o \in [r_2 - r_1, r_2 + r_1]$ such that $l_2 (d_1^o) \geq 2r_1$. 
\end{lemma}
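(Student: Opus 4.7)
The plan is to unwrap the definition of $l_2$ and translate the inequality $l_2(d_1^o) \geq 2r_1$ into a system of elementary linear constraints on $d_1^o$, then verify that the resulting region intersects $[r_2-r_1,\, r_2+r_1]$ non-trivially. Requiring each of the four arguments of the $\min$ defining $l_2$ to be at least $2r_1$ gives four conditions: $2r_4 \geq 2r_1$; $|d_1^o - r_3| \leq r_5 + r_4 - 2r_1$; $d_1^o + r_3 + r_4 - r_5 \geq 2r_1$; and $\min(d_1^o, r_3) \geq r_1$.

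The ordering $r_1 \leq \cdots \leq r_5$ makes $r_4 \geq r_1$ and $r_3 \geq r_1$ automatic, so the remaining constraints reduce $d_1^o$ to a single closed interval $[\alpha, \beta]$, where $\alpha$ is the maximum of the lower bounds $r_1$, $2r_1 + r_5 - r_3 - r_4$, and $r_3 - r_5 - r_4 + 2r_1$, and $\beta = r_3 + r_5 + r_4 - 2r_1$. The existence claim then amounts to checking that $\max\{r_2 - r_1, \alpha\} \leq \min\{r_2 + r_1, \beta\}$, which unpacks to a short list of pairwise inequalities.

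All but one of these pairwise inequalities follow directly from the non-decreasing ordering (for instance, $r_1 + r_2 \leq r_3 + r_4 + r_5$ or $4r_1 \leq 2r_3 + 2r_4$). The decisive one is $2r_1 + r_5 - r_3 - r_4 \leq r_2 + r_1$, equivalently $r_1 + r_5 \leq r_2 + r_3 + r_4$, which is exactly the hypothesis that $\{1,5\}$ is short, and is the only place where that assumption is used. I expect the main obstacle to be purely bookkeeping --- organizing the six pairwise inequalities and identifying which hypothesis each requires --- rather than any substantive technical difficulty. Combined with Lemma \ref{lem:d_1}, this lemma then produces both a horizontal and a vertical segment of length $2r_1$ inside the moment polytope $\Delta$, which is what is needed to apply Proposition \ref{prop diamondlike}.
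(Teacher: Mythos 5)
Your proposal is correct and follows essentially the same strategy as the paper's proof: translate the inequality $l_2(d_1^o)\geq 2r_1$ into a system of linear constraints on $d_1^o$ and check compatibility with $[r_2-r_1,r_2+r_1]$, with the hypothesis that $\{1,5\}$ is short being the only nontrivial ingredient. The only difference is organizational: the paper splits into cases $r_3 < r_1+r_2$ and $r_3 \geq r_1+r_2$ to resolve the absolute value and the $\min(d_1^o,r_3)$, whereas you keep the constraints uniform as an interval $[\alpha,\beta]$, which is a slight streamlining but not a different idea.
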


\begin{proof}
 We need to find $d_1^o$ such that $$\min \big( 2r_4, r_5+r_4 - | d_1^o - r_3 
|, d_1^o+r_3- r_5 + r_4, 2\min(d_1^o, r_3) \big) \geq 2r_1.$$
If $r_3 < r_1+r_2$, then we find $d_1^o$ satisfying not only the above inequality
but also $d_1^o \geq r_3$. In fact, under this condition, the only relevant inequalities are
 \begin{equation}
   \begin{array}{rl}
   r_1+r_2\geq &d_1^o \geq r_3\\
- 2r_1 + r_3+r_4+r_5   \geq &d_1^o\\
 &d_1^o \geq 2r_1-r_3-r_4+r_5\\
  \end{array}
 \end{equation}
 Hence, any choice of 
 $$d_1^o \in [ 2r_1 -r_3 -r_4 +r_5, r_5 + r_4 +r_3 -2r_1] \cap [r_3,r_1+r_2]
\neq \emptyset$$ is such that
 $l_2 (d_1^o) \geq 2r_1$.
 The above intersection is non-empty as $r_1+r_2>2r_1 -r_3 -r_4 +r_5$ and $r_5 + r_4 +r_3 -2r_1>r_3$. 
 
 On the other hand, if $r_3 \geq r_1+r_2$, one can take $d_1^o =r_1 +r_2 $. 
 Then
 $$l_2(d_1^o)=\min \big( 2r_4, r_5+r_4 -r_3+r_1+r_2 , r_1 +r_2 +r_3- r_5 + r_4, 2(r_1 +r_2) \big)$$
 and
 $l_2 (d_1^o) \geq 2r_1$
becomes 
  \begin{equation*}
   \begin{array}{l}
 r_5+r_4 -  r_3 +r_1+r_2 \geq 2r_1\\
  r_1+r_2+r_3- r_5 + r_4 \geq 2r_1.\\
     \end{array}
 \end{equation*}
 The latter holds by assumption and implies the first one.
\end{proof}

\begin{proposition}\label{5gon Lower bound} (Lower bound)
Let $r \in \R_+^5$ be generic, ordered non-decreasingly and such that $\{1,5\}$ is short.
Then the Gromov width of $\M(r)$ is at least
$ 4\pi r_1.$
\end{proposition}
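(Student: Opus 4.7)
The plan is to apply Proposition \ref{prop diamondlike} by locating a diamond-like shape $\underline{\Diamond}^2(2r_1)$ inside the interior of the caterpillar moment polytope $\Delta \subset \R^2$ and then transferring the resulting ball embedding back to $\M(r)$.

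First, using Lemma \ref{lem:d_1} I would fix a value $d_2^o$ for which the horizontal slice $\Delta \cap \{d_2 = d_2^o\}$ is precisely the segment $[r_2-r_1,\,r_1+r_2] \times \{d_2^o\}$ of length exactly $2r_1$. Next, Lemma \ref{lem:d_2} supplies a value $d_1^o$ at which the vertical slice $\Delta \cap \{d_1 = d_1^o\}$ has length at least $2r_1$. I would show that $d_1^o$ can be chosen in the open interval $(r_2-r_1,\,r_1+r_2)$, perturbing slightly inward if the lemma returns the boundary value $d_1^o = r_1+r_2$ (continuity of the vertical-slice length preserves the $\geq 2r_1$ estimate up to any $\eta>0$). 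A short check then verifies that $d_2^o$ also lies in the vertical slice at $d_1^o$, so $(d_1^o, d_2^o)$ is an interior point of $\Delta$ through which both length-$2r_1$ arms pass.

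Given such a center $(d_1^o,d_2^o)$, I would take the horizontal arm to be the length-$2r_1$ horizontal slice and the vertical arm to be a length-$2r_1$ sub-segment of the vertical slice through the same point. After translating $(d_1^o,d_2^o)$ to the origin, these arms determine numbers $l_i<0<g_i$ with $g_i-l_i=2r_1$ for $i=1,2$, giving the diamond-like shape $\underline{\Diamond}^2(2r_1)$. Both arms lie in $\Delta$, so by convexity of $\Delta$ their convex hull is contained in $\Delta$; a small inward contraction places it strictly inside $\text{Int}\,\Delta$.

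Finally, Proposition \ref{prop diamondlike}, applied on the dense open toric subset of $\M(r)$ on which both caterpillar diagonals are nonvanishing (whose moment image covers $\text{Int}\,\Delta$), yields symplectic embeddings $B^4_{2\pi(2r_1-\varepsilon)}\hookrightarrow \M(r)$ for every $\varepsilon>0$, establishing the lower bound $4\pi r_1$. The main obstacle I anticipate is the bookkeeping needed to verify that $d_1^o$ and $d_2^o$ can be chosen compatibly so that the two slices actually meet at an interior point; this reduces to combining the ranges already identified in the proofs of the two lemmas, together with the fact that $\{1,5\}$ is short.
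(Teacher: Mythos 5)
Your proposal follows the paper's proof exactly: Lemma \ref{lem:d_1} supplies $d_2^o$ with a horizontal slice of length $2r_1$, Lemma \ref{lem:d_2} supplies $d_1^o$ with a vertical slice of length at least $2r_1$, and the two arms through $(d_1^o,d_2^o)$ bound a diamond-like region $\underline{\Diamond}^2(2r_1)$ contained in $\Delta$ by convexity, whence Proposition \ref{prop diamondlike} gives the bound. Your extra bookkeeping about pushing $d_1^o$ off the boundary (e.g.\ when the proof of Lemma \ref{lem:d_2} returns $d_1^o=r_1+r_2$) and confirming that $(d_1^o,d_2^o)$ is an interior crossing point is a legitimate refinement of a detail the paper leaves implicit, but it does not change the argument.
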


\begin{proof}
Let $d_1^o, d_2^o$ be as in Lemmas \ref{lem:d_1} and \ref{lem:d_2}.
 Then $( d_1^o, d_2^o)\in \Delta$ is a center of a diamond-like shape $\underline{\Diamond}^2(2r_1)$ fully
contained in $\Delta$.
Hence the result follows by Theorem \ref{prop diamondlike}.
\end{proof}

\subsection{Upper bounds} \label{section 5gons upper}

We now focus on finding a sharp upper bound for the Gromov width of $\M(r)$,
with $r \in \mathcal C_i$, $i=2, \ldots,6$. 
\begin{proposition}\label{5gon Upper bound} (Upper bound)
 Let $r \in \R_+^5$ be generic, ordered non-decreasingly and such that $\{1,5\}$ is short.
Then the Gromov width of $\M(r)$ is at most
$ 4\pi r_1.$
\end{proposition}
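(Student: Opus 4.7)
The plan is to apply Lu's bounds (Theorem~\ref{theorem LuFanoUpperBounds} and Theorem~\ref{theorem LuBlowupUpperBounds}) to $\M(r)$ equipped with a toric structure coming from the caterpillar bending, after reshuffling the length vector if necessary.

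First, for each $r \in \mathcal{C}_i$, $i=2,\ldots,6$, I would choose a permutation $\sigma \in S_5$ satisfying $r_{\sigma(1)} \neq r_{\sigma(2)}$, $r_{\sigma(4)} \neq r_{\sigma(5)}$, and $1 \in \{\sigma(1),\sigma(2)\}$; such $\sigma$ exists because $r$ is generic and $r_1$ is (one of) the smallest of the $r_i$. Then the caterpillar bending on $\M(\sigma(r))$ is defined on all of $\M(\sigma(r)) \cong \M(r)$, making it a toric manifold. Its moment polytope $\Delta$ sits inside the rectangle bounded on the left and right by $d_1 = |r_{\sigma(2)}-r_{\sigma(1)}|$ and $d_1 = r_{\sigma(1)}+r_{\sigma(2)}$, so the horizontal width of that rectangle is $2\min(r_{\sigma(1)},r_{\sigma(2)}) = 2r_1$.

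Second, using the chart from Section~\ref{section 5gons} relating the corners $A,B,C,D$ to short sets, I would verify in each chamber that neither vertical side of the rectangle is eliminated by the triangle-inequality cuts from $\Omega$, so that both $u_3 = (-1,0)$ and $u_6 = (1,0)$ remain generators of 1-dimensional cones of the associated fan $\Sigma$. The corresponding support constants are $\lambda_3 = -(r_{\sigma(1)}+r_{\sigma(2)})$ and $\lambda_6 = |r_{\sigma(2)}-r_{\sigma(1)}|$, so the integer relation $u_3+u_6=0$ contributes to the set defining $\Upsilon(\Sigma,2\pi\varphi)$ the candidate value
\[
-2\pi(\lambda_3+\lambda_6) = 2\pi\bigl(r_{\sigma(1)}+r_{\sigma(2)}-|r_{\sigma(2)}-r_{\sigma(1)}|\bigr) = 4\pi r_1.
\]

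Third, I would distinguish the Fano and non-Fano chambers. For $r\in \mathcal{C}_2\cup\mathcal{C}_3\cup\mathcal{C}_4\cup\mathcal{C}_5$, $\M(r)$ is respectively a copy of $Bl_1\CP^2$, $\CP^1\times\CP^1$, $Bl_2\CP^2$, $Bl_3\CP^2$, each a smooth toric del Pezzo; Fano-ness is checked by exhibiting a monotone polytope with the same normal fan as $\Delta$, so Theorem~\ref{theorem LuFanoUpperBounds} applies directly. For $r\in \mathcal{C}_6$, where $\M(r)\cong Bl_4\CP^2$ is not toric Fano, I would realize its fan $\widetilde{\Sigma}$ as obtained from the fan $\Sigma$ of an auxiliary Fano toric variety (for instance a $Bl_3\CP^2$ from the $\mathcal{C}_5$ analysis) by a single toric blowup at a toric fixed point, so that $G(\Sigma)\subset G(\widetilde{\Sigma})$ and our strictly convex support function restricts to one on $\Sigma$; then Theorem~\ref{theorem LuBlowupUpperBounds} applies with the same auxiliary $\Sigma$.

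Finally, I would compute $\Upsilon(\Sigma,2\pi\varphi)$ by enumerating the primitive non-negative integer relations $\sum_k a_k u_k = 0$ among the generators in $G(\Sigma)$, drawn from the list \eqref{possible normals}, and showing that each candidate value $-2\pi\sum_k \lambda_k a_k$ is at least $4\pi r_1$. Since the relation $u_3+u_6=0$ realizes $4\pi r_1$, this gives $\Upsilon = 4\pi r_1$ and hence, by Lu's theorem, Gromov width at most $4\pi r_1$. The main obstacle is precisely this case analysis: in each of the five chambers $\mathcal{C}_2,\ldots,\mathcal{C}_6$ one must identify which of the seven possible normals actually appear in $G(\Sigma)$, list the corresponding non-negative integer syzygies, and use the short-set inequalities defining the chamber (together with the assumption that $\{1,5\}$ is short, so $2r_1 \leq (\sum_{i\neq j}r_i)-r_j$ for all $j$) to bound every other candidate value below by $4\pi r_1$. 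The non-Fano case $\mathcal{C}_6$ is delicate because the auxiliary Fano fan $\Sigma$ must be chosen so that both the support-function condition and the inclusion $G(\Sigma)\subset G(\widetilde{\Sigma})$ hold while still producing the sharp value $4\pi r_1$ from some relation in $G(\Sigma)$.
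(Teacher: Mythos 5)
Your overall strategy (reshuffle to make the caterpillar bending toric, then apply Lu's Fano and blow-up theorems, and when the fan is not Fano pass through a blow-down to a Fano auxiliary) is the same as the paper's, and the cell-by-cell case analysis you anticipate is essentially what the paper carries out for $\mathcal{C}_2,\ldots,\mathcal{C}_5$. But there is a genuine gap at the outset, and it is fatal for part of $\mathcal{C}_6$.

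You assert: \emph{``for each $r\in\mathcal{C}_i$ ... I would choose a permutation $\sigma\in S_5$ satisfying $r_{\sigma(1)}\neq r_{\sigma(2)}$, $r_{\sigma(4)}\neq r_{\sigma(5)}$, and $1\in\{\sigma(1),\sigma(2)\}$; such $\sigma$ exists because $r$ is generic.''} This is false. Genericity of $r$ does not give you such a $\sigma$. The equilateral vector $r=(1,1,1,1,1)$ is generic (all $\epsilon_I$ are odd integers, hence nonzero), lies in $\mathcal{C}_6$, and admits \emph{no} permutation with $r_{\sigma(1)}\neq r_{\sigma(2)}$; in fact \cite{HKequilateral} shows $\M(1,1,1,1,1)$ admits no toric action whatsoever, so no choice of bending diagonals can help. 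The same failure occurs for the near-equilateral cases $r_1<r_2=r_3=r_4=r_5$ and $r_1=r_2=r_3=r_4<r_5$, which are also generic and in $\mathcal{C}_6$: any permutation making the first two entries distinct forces the last two to coincide. Since your entire argument goes through Lu's theorems, which require a toric manifold, your proposal simply cannot reach these $r$. The paper closes this gap with a Moser-type continuity argument: perturb $r$ inside $\mathcal{C}_6$ to a nearby length vector $r(t)$ for which the caterpillar bending \emph{is} toric, bound the Gromov width of $\M(r(t))$ by $4\pi r_1$, and then transport symplectic embeddings of slightly smaller balls from $\M(r)$ to $\M(r(t))$ via Moser's trick to conclude the bound also holds for $\M(r)$. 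Without some such continuity argument your proof is incomplete.

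A lesser point, not a gap but worth flagging: your insistence on $1\in\{\sigma(1),\sigma(2)\}$ and on the relation $u_3+u_6=0$ is more rigid than necessary and would require you to re-verify, chamber by chamber, that both $u_3$ and $u_6$ survive as facet normals of the reshuffled polytope and that the other syzygies do not undercut $4\pi r_1$. The paper avoids this rigidity by choosing, in each chamber, a reshuffling tailored to make the relevant pair of antipodal normals obvious: it uses $u_3+u_6$ in $\mathcal{C}_2$, $u_1+u_4$ in $\mathcal{C}_3$, and $u_2+u_5$ in $\mathcal{C}_4$ and $\mathcal{C}_5$ (putting $r_1$ in the \emph{middle} slot so that the diagonal facets $d_2=d_1\pm r_1$ directly produce $4\pi r_1$). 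Either bookkeeping can work, but yours carries the extra burden you acknowledge of ruling out smaller candidate values from other relations, which you have not actually done.
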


\begin{proof}
We analyze each chamber $\mathcal C_i$, $i=2, \ldots, 6$ separately.

$\boxed{r \in \mathcal C_2.}$ This chamber is non empty, for example $r=(1,2,3,4,7)\in  \mathcal C_2$. 
Note that $r_4 <r_5$ as $\{3,5\}$ is long while $\{3,4\}$ is
short, hence $d_2 \neq 0$ on $\M(r)$. 
Similarly, $r_1 \neq r_2$ because $\{2,5\}$ is long while $\{1,5\}$ is
short and so $d_1 \neq 0$ on $\M(r)$. Therefore $\M(r)$ equipped with the caterpillar bending action is a toric manifold.
 The moment map image is presented in Figure \ref{figure case b}, and is determined by
the normals and scalars (cf. \eqref{possible normals})
$$ \begin{array}{ll}
u_1= (0,1), &\lambda_1= r_5-r_4,\\
u_3= (-1,0), &\lambda_3= -(r_1+ r_2),\\
u_5= (1,-1), &\lambda_5= - r_3,\\
u_6=(1,0), & \lambda_6 = r_2-r_1.
   \end{array}$$
As there exists a monotone polytope with the above set of normals to the facets, 
the polygon space $\M(r)$ is Fano.
Note that $u_3+u_6=0$ and therefore Theorem \ref{theorem LuFanoUpperBounds} of Lu implies that the Gromov width of $\M(r)$ 
cannot be greater than
$-2 \pi ( \lambda_3+\lambda_6)=4 \pi r_1.$
\begin{figure}
\psfrag{r_2-r_1}{\footnotesize{$r_2-r_1$}}
\psfrag{r_1+r_2}{\footnotesize{$r_1+r_2$}}
\psfrag{r_3}{\footnotesize{$r_3$}}
\psfrag{r_5-r_4}{\footnotesize{$r_5-r_4$}}
\psfrag{r_4+r_5}{\footnotesize{$r_4+r_5$}}
\psfrag{u3}{\footnotesize{$u_3$}}
\psfrag{u6}{\footnotesize{$u_6$}}
 \includegraphics[width=10cm]{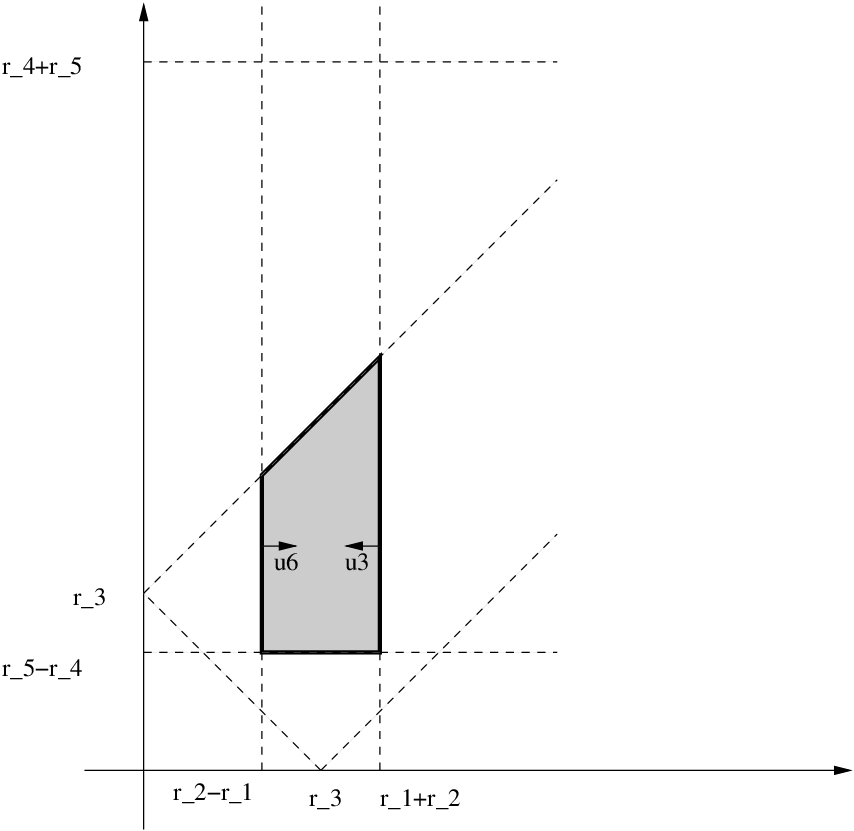}
 \caption{Moment polytope for $\M(r)$, $r \in \mathcal C_2$}
\label{figure case b}
 \end{figure}

$\boxed{r \in \mathcal C_3.}$
Example: $r= (1,2,5,6,7)$. 
For some $r$ in this chamber it might happen that $r_1=r_2$ or $r_4=r_5$, in which case $\M(r)$ would not be toric with
respect to the standard bending action. However we always have $r_2 \neq 
r_3$ because $\{2,4\}$ is short while $\{3,4\}$ is long. That also implies that 
$r_1 \neq r_5$. 
Hence if we reshuffle the length vector to $(r_2,r_3,r_4,r_1,r_5)$, then the
diagonals $d_1$ and $d_2$ never vanish on $\M(r_2,r_3,r_4,r_1,r_5)$.
Therefore the manifold $\M(r_2,r_3,r_4,r_1,r_5)$ together with 
the caterpillar bending action is a toric manifold. As it is symplectomorphic to  
$\M(r_1,r_2,r_3,r_4,r_5)$, they have the same Gromov width. To 
establish the upper bound of the Gromov width we work with the toric manifold 
$\M(r_2,r_3,r_4,r_1,r_5)$. The moment map image is always a rectangle, as presented on Figure 
\ref{figure case c}, therefore $\M(r_2,r_3,r_4,r_1,r_5)$ is Fano. 
As $u_1+u_4=0$, using Theorem \ref{theorem LuFanoUpperBounds} we obtain the upper bound of 
$4 \pi r_1$. 

\begin{figure}
\psfrag{r3-r2}{\footnotesize{$r_3-r_2$}}
\psfrag{r3+r2}{\footnotesize{$r_3+r_2$}}
\psfrag{r4}{\footnotesize{$r_4$}}
\psfrag{r5-r1}{\footnotesize{$r_5-r_1$}}
\psfrag{r5+r1}{\footnotesize{$r_5+r_1$}}
\psfrag{u1}{\footnotesize{$u_1$}}
\psfrag{u2}{\footnotesize{$u_4$}}
 \includegraphics[width=10cm]{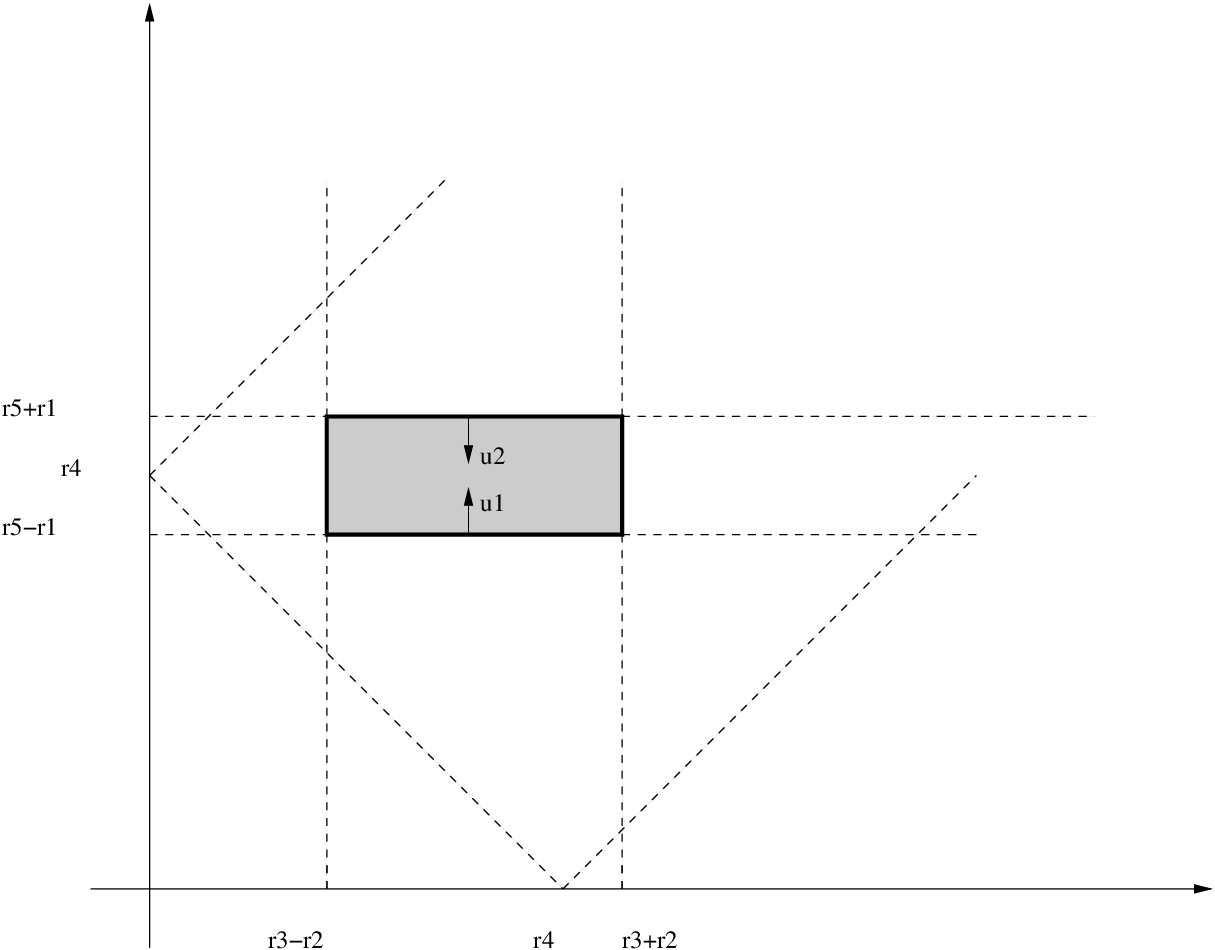}
 \caption{Moment polytope of $\M(r_2,r_3,r_4,r_1,r_5)$ for $r \in \mathcal C_3$}
\label{figure case c}
 \end{figure}

$\boxed{r \in \mathcal C_4.}$
Example: $r=(2,3,4,6,8)$.
It might happen that $r_1=r_2$. However, $r_2 \neq r_3 $ because $\{2,5\}$ is 
short while $\{3,5\}$ is long and
$r_4 \neq r_5 $ because $\{3,4\}$ is short while $\{3,5\}$ is long.
Hence the caterpillar bending action is toric on $ \M(r_2, r_3, r_1, r_4, r_5)$, 
with associated moment map image as in Figure \ref{fig:caseD}.
\begin{figure}[htbp]
\begin{center}
\psfrag{+}{\footnotesize{$r_3+r_2$}}
\psfrag{r1}{\footnotesize{$r_1$}}
\psfrag{r5-r2}{\footnotesize{$r_5+r_4$}}
\psfrag{r5+r2}{\footnotesize{$r_5-r_4$}}
\psfrag{u5}{\footnotesize{$u_5$}}
\psfrag{u2}{\footnotesize{$u_2$}}
\psfrag{A}{\footnotesize{$A$}}
\includegraphics[width=8cm]{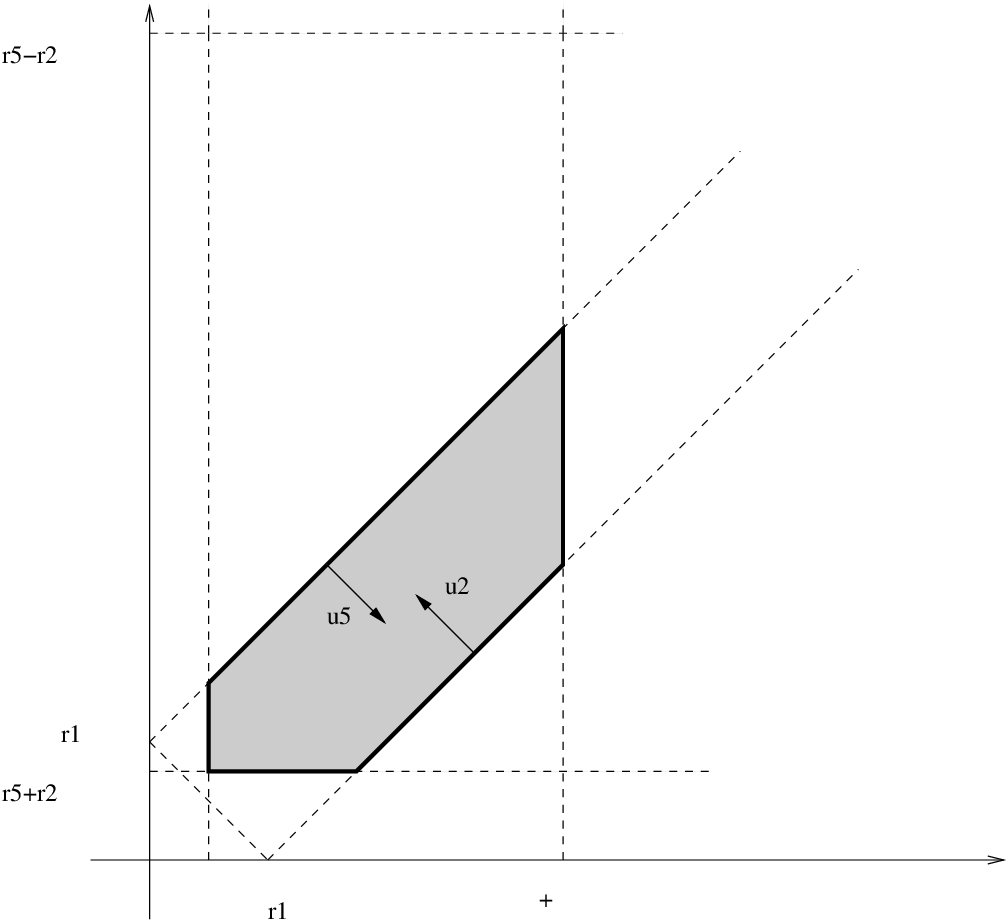}
\caption{Moment polytope of $\M(r_2,r_3,r_1,r_4,r_5)$ with $r \in \mathcal C_4$.}
\label{fig:caseD}
\end{center}
\end{figure}
$\M(r)$ is Fano and
applying Lu's Theorem \ref{theorem LuFanoUpperBounds}, we get the upper bound of $4 \pi r_1$ (relevant facet normals are $u_2$ and $u_5$).

$\boxed{r \in \mathcal C_5.}$ 
Example: $r=(2,3,3,4,5)$.
It might happen that $r_1=r_2$ and $r_4=r_5$. However
$r_3 \neq r_4 $ because $\{3,5\}$ is short while $\{4,5\}$ is long.
This also implies that $r_2 \neq r_5.$
Hence $\M(r_3, r_4, r_1, r_2, r_5)$ together with the caterpillar bending action is 
a  toric manifold with moment image as in Figure \ref{fig:caseE}.
\begin{figure}[htbp]
\begin{center}
\psfrag{r1}{\footnotesize{$r_1$}}
\psfrag{r5-r2}{\footnotesize{$r_5-r_2$}}
\psfrag{u5}{\footnotesize{$u_5$}}
\psfrag{u2}{\footnotesize{$u_2$}}
\psfrag{++}{\footnotesize{$r_5+r_2$}}
\psfrag{2-++}{\footnotesize{$-2r_1+r_2+r_5$}}
\psfrag{-}{\footnotesize{$r_4-r_3$}}
\psfrag{+}{\footnotesize{$r_4+r_3$}}
\includegraphics[width=9cm]{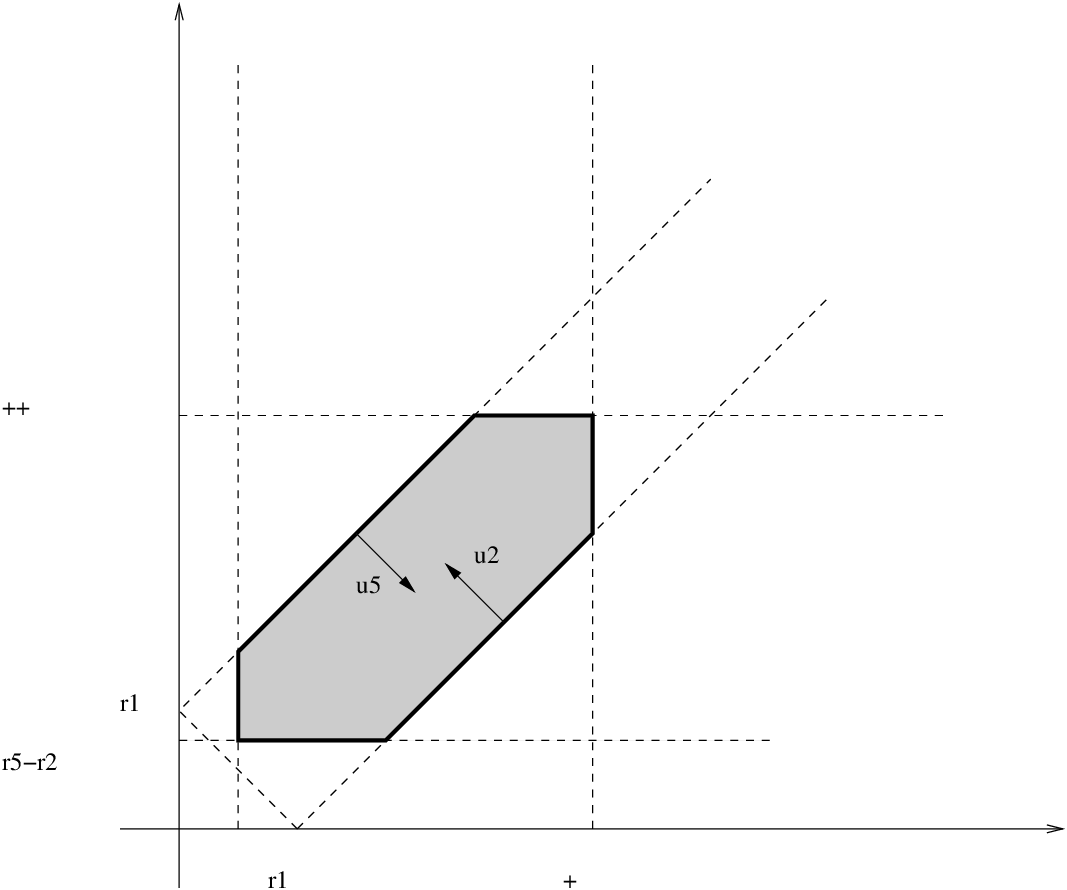}
\caption{Moment polytope of $\M(r_3,r_4,r_1,r_2,r_5)$ with $r \in \mathcal C_5$.}
\label{fig:caseE}
\end{center}
\end{figure}
Applying Lu's Theorem \ref{theorem LuFanoUpperBounds}, with relevant facet normals $u_2$ and $u_5$, we obtain the upper bound of
 $4 \pi r_1$ for the Gromov width of $\M(r)$.

$\boxed{ r \in \mathcal C_6.}$
Example: $r=(3,4,5,5,6)$.
This chamber contains some length vectors $r$ of the type
\begin{itemize}
\item[(i)] $r_1=r_2=r_3=r_4=r_5$ (equilateral case),
\item[(ii)]  $r_1 <r_2 =r_3=r_4=r_5$, example $r=(1,2,2,2,2)$,
\item[(iii)] $r_1 =r_2 =r_3=r_4<r_5$, example $r=(2,2,2,2,3)$
\end{itemize}
which are not toric for any bending action, even after reshuffling the edges.
Note however that $r=(r_1,r_1,r_1,r_1,r_5)$ is in the chamber $\mathcal C_1$ if $r_5>2r_1$ and the corresponding manifold is $\C \P^2$, hence it is toric.
It is shown in \cite{HKequilateral}, that in the equilateral case it is impossible to equip $\M(r)$ with a toric action.
For any $r \in \mathcal C_6$ not of the type (i),(ii), nor (iii), either $\M(r)$ is toric with respect to the
caterpillar bending action or 
can be equipped with a toric action by using the caterpillar bending action induced from $\M(\sigma(r))$ 
for a suitable permutation $\sigma \in S_5$.
However,
there is no universal $\sigma \in S_5$ that would work for all $r$'s in this chamber, 
as it was the case for the chambers $\mathcal C_3, \mathcal C_4, \mathcal C_5$.

Our proof for the upper bounds for $r \in \mathcal C_6$ is as follows: we first prove the claim for those $r \in\mathcal C_6$ for which $\M(r)$ with the caterpillar bending action is toric, and then we use a ``Moser type" argument to extend the result to other cases. 
Assume that $\M(r)$ is toric with the caterpillar bending action. Then the moment image of $\M(r)$ is as in Figure
\ref{figure case f}.
\begin{figure}[htbp]
\begin{center}
\psfrag{A}{\footnotesize{$A$}}
\psfrag{B}{\footnotesize{$B$}}
\psfrag{C}{\footnotesize{$C$}}
\psfrag{D}{\footnotesize{$D$}}
\psfrag{r3}{\tiny{$r_3$}}
 \psfrag{r2+r1}{\footnotesize{$r_1+r_2$}}
 \psfrag{r2-r1}{\footnotesize{$r_2-r_1$}}
 \psfrag{r5+r4}{\footnotesize{$r_4+r_5$}}
 \psfrag{r5-r4}{\footnotesize{$r_5-r_4$}}
\includegraphics[width=9cm]{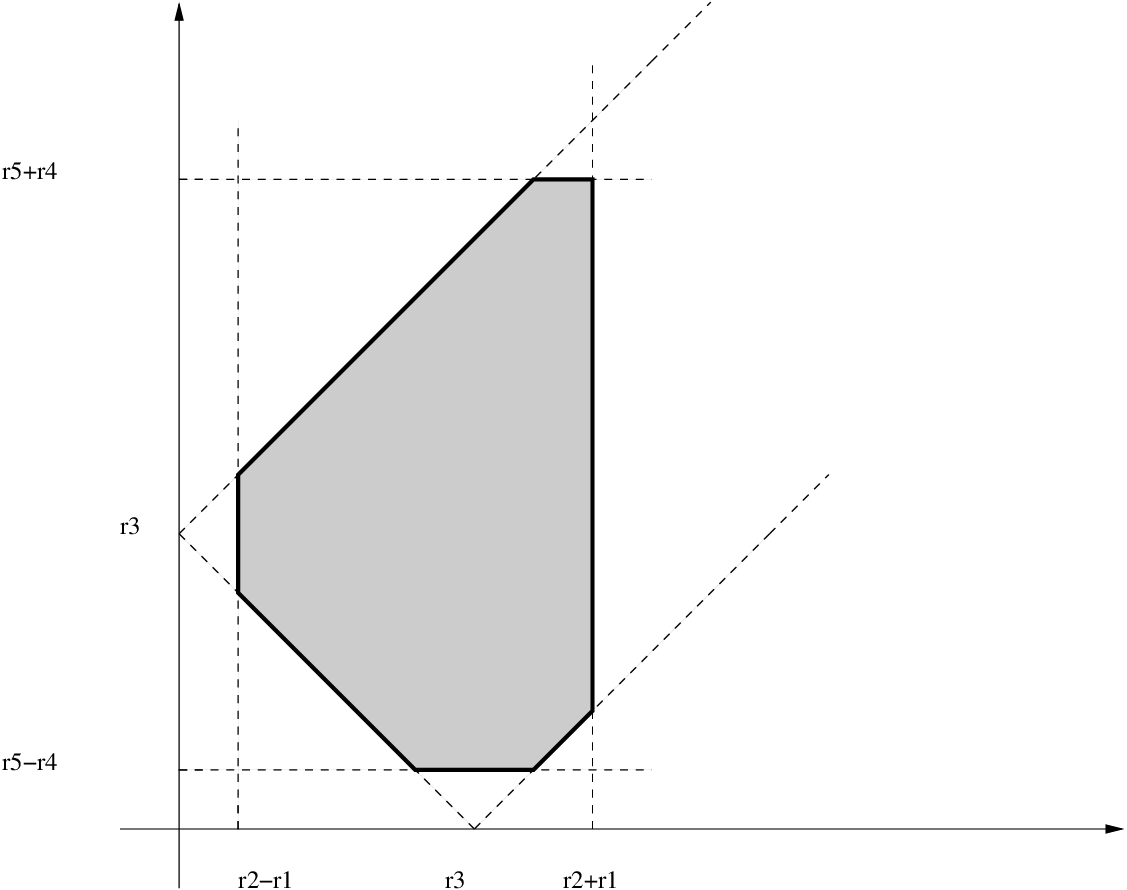}
\caption{Moment polytope of $\M(r) $ with $r \in \mathcal C_6$.}
\label{figure case f}
\end{center}
\end{figure}
From the moment polytope we can see that $\M(r)$ is not Fano 
and hence we cannot apply Lu's Theorem \ref{theorem LuFanoUpperBounds} directly. 
However $\M(r)$ is the toric blow up at three toric fixed points of the symplectic 
toric manifold $(\C \P^1 \times \C \P^1, 4r_1 \omega_{FS} \oplus 4r_4 \omega_{FS})$ corresponding, via the Delzant construction, 
to the rectangle $ABCD$.

Applying Theorem \ref{theorem LuBlowupUpperBounds} of Lu we obtain that the Gromov width of $\M(r)$ is at most 
$\Upsilon (\Sigma_{ABCD}, 2 \pi \, \varphi_{ABCD})$, which is at most $4 \pi r_1.$

Now we use the Moser method to find an upper bound for the Gromov width of the remaining cases by a 
continuity argument. We are grateful to D. Joyce for the idea of using continuity and to 
Y. Karshon for help with the details.

Consider $\M(r)$, with $r \in \mathcal C_6$ for which $r_1=r_2$ or $r_4=r_5$.
Let $\M_t$ be the family of polygon spaces $\M_t:=\M(r_1,r_2+t,r_3,r_4,r_5+t)$, for $t>0$, small enough so that $(r_1,r_2+t,r_3,r_4,r_5+t)$ is still generic.
Note that for a small positive $t$ the underlying differentiable manifold is the same for all $\M_t=(M, \omega_t)$. The length vector (which depends on $t$) encodes the different symplectic structures $\omega_t$ on the differentiable manifold $M$. 
Moreover, $\M_t$, with caterpillar bending action, is a symplectic toric manifold
and the Gromov width of $\M_t$ is $4\pi r_1$.
The key observation is that a ball of capacity bigger than $4\pi r_1$ cannot be embedded into 
$\M_0=\M(r)=(M, \omega_0)$
because given any symplectic embedding of $B_a$ into $\M_0$ 
we can always construct an embedding of $B_{a - \varepsilon}$ into $\M_t$ for $t \neq 0$ and $\varepsilon>0$ small enough, as we show below.

Take any symplectic embedding of a ball of capacity $a$, $\psi \colon ( B_a, \omega_{std}) \hookrightarrow (M, \omega_0)$. 
That is, we have a smooth map $\psi \colon B_a \rightarrow M$ such that $ \psi^* \omega_0=\omega_{std}$.
Denote $\Omega_t :=\psi^* (\omega_t)$ on $B_a$.
Following the arguments in Lemma 2.1 and Remark 2.2 of McDuff \cite{McDuff} we will use ``Moser's trick'' to construct a smooth embedding  
$$\phi_t \colon B_{a -\varepsilon} \rightarrow B_a$$
such that $$\phi_t^*(\Omega_t)=\omega_{std}.$$
Then $\psi_t:= \psi \circ \phi_t \colon B_{a- \varepsilon} \rightarrow M$ will be a symplectic embedding of $B_{a- \varepsilon}$ into $\M_t$, as $\psi_t^* (\omega_t)=\omega_{std}$.

Observe that 
\begin{equation}\label{pull back}
\phi_t^*(\Omega_t)=\omega_{std}\,\,\, \Leftrightarrow \,\,\,\frac{d}{dt}( \phi_t^* \Omega_t)=0.
\end{equation}
Let $X_t$ denote the vector field generated by the isotopy $\phi_t$, i.e. $\frac{d}{dt}\, \phi_t= X_t\circ \phi_t$. Then
 $$\frac{d}{dt}( \phi_t^* \Omega_t)=\phi^*_t (\mathfrak{L}_{X_t}\Omega_t + \frac{d}{dt} \Omega_t).$$
By the Poincar\'e Lemma (with parameters; cf \cite[Remark 2.2]{McDuff}) for $B_a$, there exist $\lambda_t$ such that $ \frac{d}{dt} \Omega_t=-d\lambda_t$. 
Therefore, using the Cartan formula we get
$$ \frac{d}{dt} \phi_t^* \Omega_t= \phi_t^*(\mathfrak{L}_{X_t}\Omega_t -d\lambda_t)=
\phi_t^*( d (\iota_{X_t}\Omega_t)+ \iota_{X_t}( d \Omega_t) - d \lambda_t)=\phi_t^* d(\iota_{X_t}\Omega_t-\lambda_t).$$
If $$ \iota_{X_t}\Omega_t=\lambda_t$$ 
then $ \frac{d}{dt} \phi_t^* \Omega_t$ is certainly $0$, and thus $\phi_t^*(\Omega_t)=\omega_{std}$ by \eqref{pull back}.
The non-degeneracy of $\Omega_t$ on $B_a$ guarantees that this equation can always be solved for $X_t$. 
For each $p \in B_a$, by integrating $X_t$ one obtains its flow $\phi_t$ defined in some neighborhood of $p$. 
The orbit $\phi_t(p)$ stays in $B_a$ for small $t$.
Given any $t>0$ we cannot guarantee that $\phi_t$ is defined on the whole $B_a$. 
However, given any $\varepsilon >0$ we can find a small $t>0$ such that 
$\phi_t(B_{a-\varepsilon}) \subset B_a$. Then the map
$$\psi \circ \phi_t \colon (B_{a-\varepsilon}, \omega_{std}) \rightarrow (M,\omega_t)$$
is a symplectic embedding. As the Gromov width of $(M,\omega_t)$ is $4\pi r_1$, we must have
$a - \varepsilon< 4 \pi r_1$ for each $\varepsilon >0$.
This proves that $a \leq 4 \pi r_1$ for all $a$ such that the ball $B_a$ of capacity $a$ symplectically embeds into $(M, \omega_0)=\M(r).$

\end{proof}

\section{Gromov width of the spaces of $6$-gons}\label{section 6gons}
In this section we analyze the Gromov width of the space of $6$-gons $\M(r)$
(as usually, $r \in \R_+^6$ is assumed to be generic and thus $\M(r_1,\ldots,r_6)$ is a smooth manifold; 
see Section \ref{section polygon spaces}).  
Recall that our Theorem \ref{theo:6gon} states that 
\begin{equation}\label{eq:min6}
 \rho(r)=2\pi \min \{2r_j,\,\big(\sum_{i \neq j} r_i \big) - r_j\,|\, j=1,\ldots,6\}
\end{equation}
is the lower bound for the Gromov width of $\M(r)$ 
and that if $\sigma \in S_6$ is such that $r_{\sigma(1)}\leq \ldots \leq r_{\sigma(6)}$ and
one of the following holds:
\begin{itemize}
 \item $\{1,2,3,4\}$ and $\{1,2,6\}$ are short for $\sigma(r)$, or
\item $\{1,2,6\}$ and $\{4,6 \}$ are long for $\sigma(r)$, or
\item $\{ 5,6 \}$ and $\{ 2,3,6\}$ are short for $\sigma(r)$
\end{itemize}
then the above formula is exactly the Gromov width of $\M(r)$.
As $\M(r)$ and $\M(\sigma(r))$ are symplectomorphic for each permutation $\sigma \in S_6$, 
we continue to work with the assumption that $r_1 \leq \ldots \leq r_6$, 
Then the value  of \eqref{eq:min6} is $2 \pi \min\{2r_1, (r_1+\ldots+r_5)-r_6\} $.
In this section we use the bending action along the system of diagonals 
as in Figure \ref{fig:6gonsystem}.
\begin{figure}[htbp]
\begin{center}
\psfrag{1}{$e_1$}
\psfrag{2}{$e_2$}
\psfrag{3}{$e_3$}
\psfrag{4}{$e_4$}
\psfrag{5}{$e_5$}
\psfrag{6}{$e_6$}
\psfrag{d1}{$d_1$}
\psfrag{d2}{$d_2$}
\psfrag{d3}{$d_3$}
\includegraphics[width=6cm]{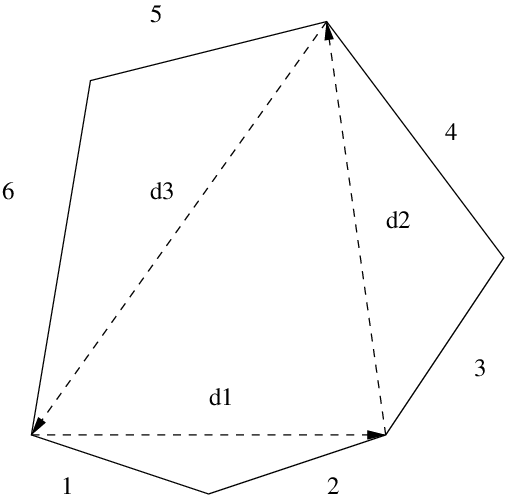}
\caption{System of diagonals.}
\label{fig:6gonsystem}
\end{center}
\end{figure}
The functions $d_i \colon \M(r) \rightarrow \R$, $i=1,\,2,\,3$, denote the lengths of the respective diagonals. 
They are continuous on the whole $\M(r)$ and smooth on the dense subset $\{d_i \neq 0\,|\,i=1,\,2,\,3\} \subset \M(r)$.
This subset is equipped with the toric bending action for which the function $(d_1,\,d_2,\,d_3)$, restricted to $\{d_i \neq 0\,|\,i=1,\,2,\,3\}$, 
is a moment map.
The image $\Delta$ of the (continuous) map $(d_1,d_2,d_3) \colon \M(r) \rightarrow \R^3$ is the region in $\R^3$ bounded by 
the triangle inequalities:
\begin{equation}\label{6gonmomentpolytope}
\begin{aligned}
  r_2-r_1 &\leq d_1 \leq r_2+r_1,\\ 
r_4-r_3 &\leq d_2 \leq r_3 +r_4,\\
r_6-r_5 &\leq d_3 \leq r_5+r_6,\\
|d_1-d_2| &\leq d_3 \leq d_1 +d_2
\end{aligned}
\end{equation}
By a slight abuse of notation we denote the coordinates of $\R^3$ also by $d_1,\,d_2,\,d_3$.
Let $C$ be the cuboid of points 
satisfying the first three pairs of inequalities \eqref{6gonmomentpolytope}, and let $H_j^+$ be 
the affine half-space 
\begin{equation}\label{eq:hj+}
 H_j^+ =\big \{\sum_{i=1}^{3} d_i \geq 2 d_j \big \},
\end{equation} bounded by an affine 
hyperplane $H_j:=\{\sum_{i=1}^{3} d_i = 2 d_j\}$, $j=1,2,3$. Then $$\Delta= C \cap 
\bigcap_{j=1}^{3}H_j^+.$$

If $\{1,6\}$ is long, i.e. $\gamma:=r_1+\ldots+r_5-r_6<2r_1$ then $\M(r)$ is symplectomorphic to $\C \P^3$ and its Gromov width is $2\pi \gamma$ as we showed in Section \ref{section projective}. 
One can also see it directly here by observing that $\Delta$ is a simplex with vertices \begin{align*}
v_3&=(r_2+r_1,r_3 +r_4,r_6-r_5),\\
p_1&=v_3-\gamma(1,0,0)=(r_6-r_5-r_3-r_4,r_3 +r_4,r_6-r_5),\\
p_2&=v_3-\gamma(0,1,0)=(r_2+r_1,r_6-r_5-r_1-r_2,r_6-r_5),\\
p_3&=v_3+\gamma(0,0,1)=(r_2+r_1,r_3 +r_4,r_2+r_1+r_3 +r_4),
\end{align*}
which is fully contained in $\R_+^3$, see Figure \ref{fig:simplex}.
\begin{figure}[htbp]
\begin{center}
\psfrag{p0}{\footnotesize{$v_3$}}
\psfrag{p1}{\footnotesize{$p_1$}}
\psfrag{p2}{\footnotesize{$p_2$}}
\psfrag{p3}{\footnotesize{$p_3$}}
\psfrag{d1}{\footnotesize{$d_1$}}
\psfrag{d2}{\footnotesize{$d_2$}}
\psfrag{d3}{\footnotesize{$d_3$}}
\includegraphics[width=6cm]{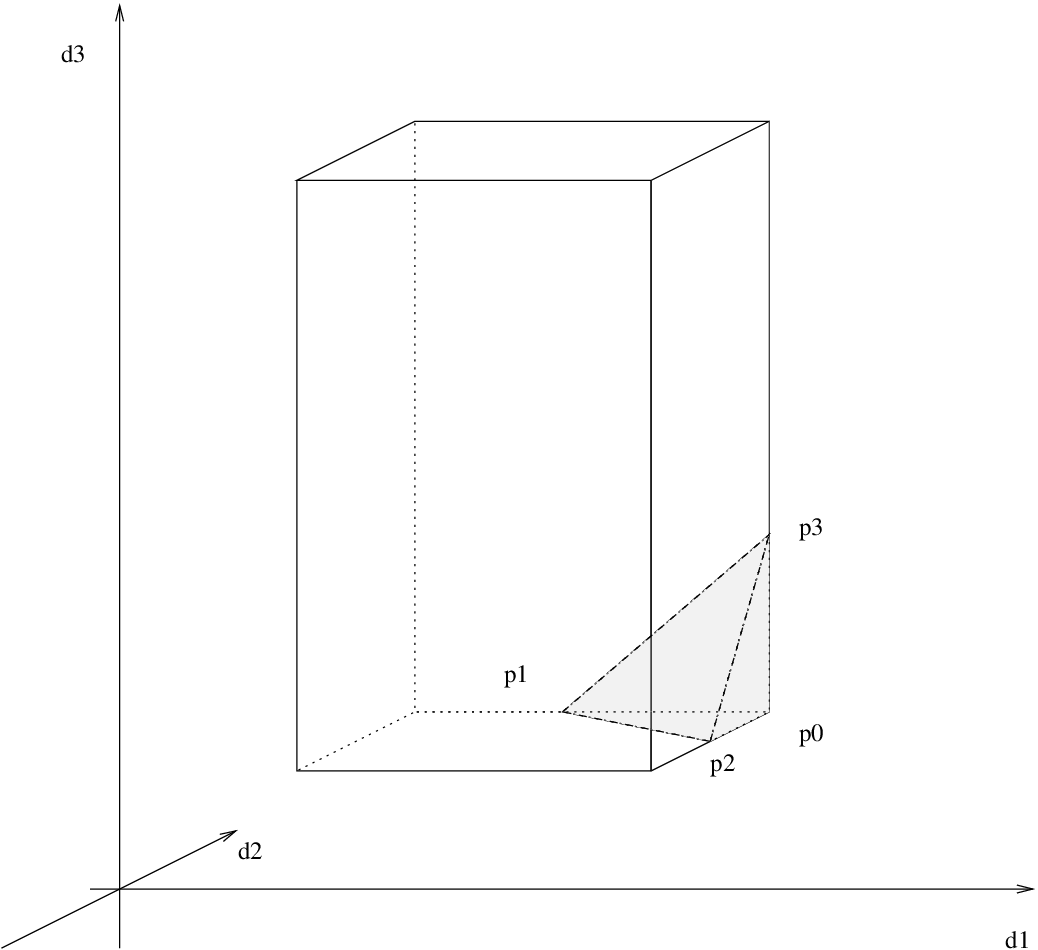}
\caption{The moment map image $\Delta=C \cap H_3^+= C \cap \bigcap_{j=1}^{3}H_j^+$.}
\label{fig:simplex}
\end{center}
\end{figure}

Now we concentrate on the cases when $\{1,6\}$ is short, that is, we work with the assumption 
\begin{equation}\label{assumption 6gon}
\gamma=r_1+\ldots+r_5-r_6>2r_1
\end{equation}
and thus the expected Gromov width is $4\pi r_1$.

\subsection{Lower bounds} To determine the lower bound for the Gromov width of
$\M(r)$ we can fit a diamond-like open subset $\underline{ \Diamond}^n(2 r_1)$
in $\Delta$. As $\Delta$ is convex by construction, it is sufficient to prove 
that in $\Delta$ there are segments parallel to the $d_1, d_2, d_3$ axis respectively, 
each of length at least 2$r_1$ and intersecting at a point $(d_1^o, d_2^o, d_3^o) \in \Delta$.
\begin{lemma}\label{lemma goodd2d3}
There are values $d_2^o,d_3^o$ such that the interval $\{(t,d_2^o,d_3^o):\,r_2-r_1 \leq 
t \leq r_1+r_2\}$ of length $2r_1$ is fully contained in $\Delta$.
\end{lemma}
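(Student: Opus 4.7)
The idea is to parametrise what it means for a horizontal segment $\{(t,d_2^o,d_3^o):t\in [r_2-r_1,r_1+r_2]\}$ to sit inside $\Delta$ and to reduce the problem to a small system of linear inequalities in $(d_2^o,d_3^o)$.  First I would write $\Delta = C\cap H_1^+\cap H_2^+\cap H_3^+$ explicitly. The cuboid constraint $C$ on the segment just asks that $d_2^o\in[r_4-r_3,r_3+r_4]$ and $d_3^o\in[r_6-r_5,r_5+r_6]$, since the $d_1$ constraint $t\in[r_2-r_1,r_1+r_2]$ is built into the segment. For each of the three hyperplane conditions $H_j^+$ of \eqref{eq:hj+}, I would take the worst case over $t$. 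The conditions $d_1+d_3\ge d_2$ (from $H_2^+$) and $d_1+d_2\ge d_3$ (from $H_3^+$) are the tightest at $t=r_2-r_1$, giving $|d_2^o-d_3^o|\le r_2-r_1$, while $d_2+d_3\ge d_1$ (from $H_1^+$) is tightest at $t=r_1+r_2$, giving $d_2^o+d_3^o\ge r_1+r_2$.

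Thus the lemma reduces to exhibiting $(d_2^o,d_3^o)$ satisfying the two cuboid membership conditions together with
\[
|d_2^o-d_3^o|\le r_2-r_1,\qquad d_2^o+d_3^o\ge r_1+r_2.
\]
I would make the explicit choice $d_2^o=r_3+r_4$ and $d_3^o=\max(r_3+r_4,\,r_6-r_5)$, i.e.\ I sit at the upper end of the $d_2$-cuboid interval and then raise $d_3^o$ only if forced to by the $d_3$-cuboid lower bound. The cuboid conditions hold by construction, using $r_3+r_4\le r_5+r_6$ (from $r_3\le r_5$, $r_4\le r_6$). The sum condition is immediate from $r_3+r_4\ge r_1+r_2$, which follows from the non-decreasing ordering of $r$.

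The one place where the hypothesis $\{1,6\}$ short really enters is verifying $|d_2^o-d_3^o|\le r_2-r_1$. If $r_6-r_5\le r_3+r_4$ then $d_2^o=d_3^o$ and the inequality is trivial. Otherwise $d_3^o-d_2^o=r_6-r_5-r_3-r_4$, and I would rewrite the inequality $|d_2^o-d_3^o|\le r_2-r_1$ as $r_6\le r_2-r_1+r_3+r_4+r_5$, which is precisely the shortness of $\{1,6\}$ (namely $r_1+r_6<r_2+r_3+r_4+r_5$). This last verification, although short, is really the crux: it is the hypothesis $\{1,6\}$ short that geometrically guarantees the two cuboid intervals for $d_2^o$ and $d_3^o$ cannot drift so far apart that no pair in the rectangle $[r_4-r_3,r_3+r_4]\times[r_6-r_5,r_5+r_6]$ lies within the diagonal strip $|d_2-d_3|\le r_2-r_1$, and I expect this to be the step requiring the most care.
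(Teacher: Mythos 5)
Your proof is correct and follows essentially the same route as the paper: both reduce the problem to the system $d_2^o\in[r_4-r_3,r_3+r_4]$, $d_3^o\in[r_6-r_5,r_5+r_6]$, $|d_2^o-d_3^o|\le r_2-r_1$, $d_2^o+d_3^o\ge r_1+r_2$ and verify feasibility from the ordering together with $\{1,6\}$ being short. The paper argues non-emptiness of the intersection of two planar regions, whereas you exhibit the explicit feasible point $(d_2^o,d_3^o)=(r_3+r_4,\max(r_3+r_4,r_6-r_5))$, a small gain in concreteness but not a genuinely different argument.
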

\begin{proof}
 We need to show that there are values $d_2^o,d_3^o$ such that the inequalities 
\eqref{6gonmomentpolytope} are satisfied and that one can build triangles with 
edge lengths $(r_2-r_1,d_2^o,d_3^o)$,  $(r_2+r_1,d_2^o,d_3^o)$. That is, we need to 
show that the two regions presented in Figure \ref{fig:6gonfitarm} have 
non-empty intersection.
\begin{figure}[htbp]
\begin{center}
\psfrag{a}{\tiny{$r_2-r_1$}}
\psfrag{b}{\tiny{$r_2+r_1$}}
\psfrag{c}{\tiny{$r_4-r_3$}}
\psfrag{d}{\tiny{$r_4+r_3$}}
\psfrag{e}{\tiny{$r_6-r_5$}}
\psfrag{f}{\tiny{$r_6+r_5$}}
\includegraphics[width=10cm]{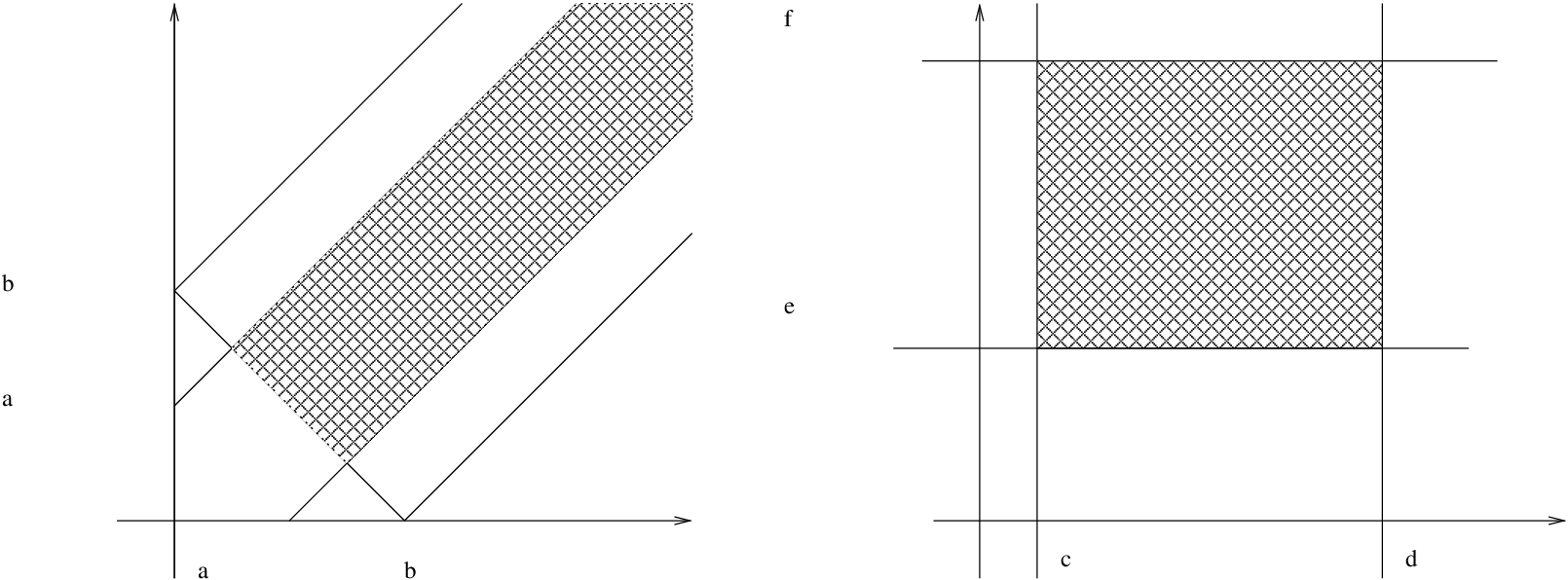}
\caption{Conditions on $d_2^o,d_3^o$.}
\label{fig:6gonfitarm}
\end{center}
\end{figure}
Note that this intersection is not empty if and only if 
\begin{displaymath}
 \begin{cases}
  r_2-r_1+r_5+r_6 & \geq r_4-r_3,\\
  r_2-r_1+r_3+r_4 & \geq r_6-r_5.
 \end{cases}
\end{displaymath}
This is equivalent to $r_2+r_3+r_4+r_5 \geq r_1+r_6$ and follows from our assumption \eqref{assumption 6gon}.
\end{proof}

Define functions $l_2,l_3 \colon \R^2 \rightarrow \R$, and $c \colon \R^3 
\rightarrow \R$ by
\begin{align*}
 l_2(d_1,d_3)&=\min(r_3+r_4, d_1+d_3) - \max (r_4-r_3, |d_1-d_3|)\\
 &=\min (2r_3, r_3+r_4 -|d_1-d_3|, d_1 +d_3-r_4+r_3, 2 
\min(d_1,d_3)),\\
l_3(d_1,d_2)&=\min(r_5+r_6, d_1+d_2) - \max (r_6-r_5, |d_1-d_2|)\\
&=\min (2r_5, r_5+r_6 -|d_1-d_2|, d_1 +d_2-r_6+r_5, 2 
\min(d_1,d_2)),\\
c(d_1,d_2,d_3)&= \min (l_2(d_1,d_3),l_3(d_1,d_2))
\end{align*}
Note that if $\Delta$ and the line $\{(d_1,d_2,d_3) \in \R^3 \mid d_1=d_1^o,d_3=d_3^o\}$
intersect non--trivially, then they intersect in an interval of length 
$l_2(d_1^o,d_3^o)$. Similarly for $l_3$.

\begin{lemma}\label{lemma crossfits}
 There exist $d_2^o,d_3^o$ as in Lemma \ref{lemma goodd2d3} and $d_1 \in 
(r_2-r_1,r_2+r_1)$ such that $c(d_1,d_2^o,d_3^o) \geq 2r_1$. 
\end{lemma}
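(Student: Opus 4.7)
The approach exploits a useful separation of variables. Since $r_1\le r_3$ and $r_1\le r_5$, the terms $2r_3$ and $2r_5$ in the minima defining $l_2$ and $l_3$ are automatically at least $2r_1$, so the condition $c(d_1,d_2^o,d_3^o)\ge 2r_1$ reduces to six explicit affine inequalities: three coupling $d_1$ with $d_3^o$ (from $l_2\ge 2r_1$), namely
$$|d_1-d_3^o|\le r_3+r_4-2r_1,\quad d_1+d_3^o\ge 2r_1+r_4-r_3,\quad \min(d_1,d_3^o)\ge r_1,$$
together with three analogous inequalities coupling $d_1$ with $d_2^o$ (from $l_3\ge 2r_1$). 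The crucial feature is that once $d_1$ is fixed, the conditions on $d_2^o$ and $d_3^o$ decouple.

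My first move would be to set $d_1=r_2$, the midpoint of $(r_2-r_1,r_2+r_1)$, and to identify the resulting interval $I_3$ of admissible $d_3^o$'s (intersected with $[r_6-r_5,\,r_5+r_6]$) and $I_2$ of admissible $d_2^o$'s (intersected with $[r_4-r_3,\,r_3+r_4]$). Each required endpoint comparison reduces, after rearrangement, either to an instance of the standing hypothesis \eqref{assumption 6gon} (that is, $\gamma>2r_1$) or to an immediate consequence of the ordering $r_1\le\cdots\le r_6$; for example the bound $r_6-r_5\le r_2+r_3+r_4-2r_1$ is just a rewriting of $\gamma>2r_1$. In this way both $I_2$ and $I_3$ are seen to be non-empty intervals.

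The main obstacle is that Lemma~\ref{lemma goodd2d3} imposes the additional \emph{coupling} constraints $|d_2^o-d_3^o|\le r_2-r_1$ and $d_2^o+d_3^o\ge r_2+r_1$, which prevent choosing $d_2^o\in I_2$ and $d_3^o\in I_3$ independently. To overcome this I would first try the symmetric choice $d_2^o=d_3^o=:z$, so that the difference condition $|d_2^o-d_3^o|=0\le r_2-r_1$ is automatic and the sum condition $2z\ge r_2+r_1$ is dominated by the existing lower bounds on $d_2^o,d_3^o$. Compatibility of this choice with $I_2\cap I_3\cap [r_4-r_3,r_3+r_4]\cap[r_6-r_5,r_5+r_6]$ is again verified by elementary comparisons, each reducing to the ordering or to \eqref{assumption 6gon}.

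If in some corner case the symmetric choice is tight (for instance if $r_6-r_5$ is very close to $r_2+r_1$), I would use the freedom to shift $d_1$ slightly inside $(r_2-r_1,r_2+r_1)$: all six inequalities depend continuously on $d_1$, so a small perturbation realigns the admissible intervals with the coupling strip. A brief case split based on whether $r_6-r_5\ge r_2$ (or equivalently whether $r_4-r_3$ dominates) then exhausts all regimes, and the proof is completed by direct verification that the resulting choice of $(d_1,d_2^o,d_3^o)$ satisfies all the listed inequalities.
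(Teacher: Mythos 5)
Your reduction of $c(d_1,d_2^o,d_3^o)\ge 2r_1$ to six affine inequalities, and your identification of the coupling constraints $|d_2^o-d_3^o|\le r_2-r_1$, $d_2^o+d_3^o\ge r_1+r_2$ coming from Lemma~\ref{lemma goodd2d3}, are both correct and match what the paper works with implicitly. However, the central device you propose --- taking $d_2^o=d_3^o=:z$ --- does not work in general, and the suggested remedy (shifting $d_1$) cannot repair it.

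The difficulty is that $z$ must simultaneously lie in $[r_4-r_3,\,r_3+r_4]$ and $[r_6-r_5,\,r_5+r_6]$, two intervals that do \emph{not} depend on $d_1$. These intervals can be disjoint under the standing hypotheses: the condition $r_6-r_5>r_3+r_4$, i.e.\ $r_3+r_4+r_5<r_6$, is perfectly compatible with $r_1\le\cdots\le r_6$, genericity, and $\gamma>2r_1$. For instance $r=(1,2,2,2,2,6.5)$ is generic, nondecreasing, and $\gamma=2.5>2=2r_1$, yet $[r_4-r_3,r_3+r_4]=[0,4]$ and $[r_6-r_5,r_5+r_6]=[4.5,8.5]$ are disjoint, so no symmetric $z$ exists for any $d_1$. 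In this example the lemma does hold (e.g.\ $d_1=2.5$, $d_2^o=4$, $d_3^o=4.5$ gives $c=2=2r_1$), but the witness is necessarily asymmetric. Your final paragraph gestures at a case split on $r_6-r_5\gtrless r_2$, but without dropping the symmetric ansatz the construction cannot be salvaged in the regime above, and you have not specified what replaces it.

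The paper's argument avoids this trap by treating $d_2^o$ and $d_3^o$ as genuinely independent. It writes down, for variable $d_1$, the admissible ranges $[A_1(d_1),A_2(d_1)]$ for $d_2^o$ and $[B_1(d_1),B_2(d_1)]$ for $d_3^o$, then shows the two-variable system (intersected with the Lemma~\ref{lemma goodd2d3} conditions, which couple $d_2^o$ and $d_3^o$ via two triangle inequalities) is nonempty precisely when $r_2-r_1+B_2\ge A_1$ and $r_2-r_1+A_2\ge B_1$. After unwinding, the only nontrivial requirement is $2d_1\ge 5r_1-r_2-r_3-r_4-r_5+r_6$, and the existence of a suitable $d_1\in[\max(r_1,r_2-r_1),\,r_1+r_2]$ then follows from $\gamma>2r_1$ together with $r_1\le r_2$. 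To repair your proof you would need to abandon the symmetric choice and carry out essentially this asymmetric bookkeeping.
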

\begin{proof}

We need to find $d_1,d_2^o,d_3^o$ such that $d_2^o,d_3^o$ are from Lemma \ref{lemma goodd2d3}, i.e., they are in the intersection of the 
two regions presented in Figure \ref{fig:6gonfitarm}, and that 
\begin{align*}
 \min (2r_3, r_3+r_4 -|d_1-d_3^o|, d_1 +d_3^o-r_4+r_3, 2 \min(d_1,d_3^o))& \geq 2r_1\\
\min (2r_5, r_5+r_6 -|d_1-d_2^o|, d_1 +d_2^o-r_6+r_5, 2 \min(d_1,d_2^o))& \geq 2r_1.
\end{align*}
We show that there exist $d_1^o,d_2^o,d_3^o$ satisfying not 
only the above conditions but also: $d_2^o,d_3^o \geq d_1^o\geq r_1$. 
The only non-trivial conditions in the above inequalities are 
\begin{align*}
r_3+r_4 +d_1-d_3^o & \geq 2r_1,\\
d_1 +d_3^o-r_4+r_3 & \geq 2r_1,\\
r_5+r_6 +d_1-d_2^o& \geq 2r_1,\\
d_1 +d_2^o-r_6+r_5& \geq 2r_1.
\end{align*}
This gives the following conditions on $d_2^o,d_3^o$
\begin{align*}
r_3+r_4 +d_1- 2r_1& \geq  d_3^o,\\
d_3^o& \geq 2r_1-r_3+r_4 -d_1 ,\\
r_5+r_6 +d_1-2r_1& \geq d_2^o ,\\
d_2^o& \geq 2r_1-r_5+r_6-d_1.
\end{align*}
Combining the above with the conditions in Figure \ref{fig:6gonfitarm} we obtain 
the 
intersection of the two regions in Figure \ref{fig:6gonfitcross},
\begin{figure}[htbp]
\begin{center}
\psfrag{a}{\tiny{$r_2-r_1$}}
\psfrag{b}{\tiny{$r_2+r_1$}}
\psfrag{c}{\tiny{$A_1$}}
\psfrag{d}{\tiny{$A_2$}}
\psfrag{e}{\tiny{$B_1$}}
\psfrag{f}{\tiny{$B_2$}}
\includegraphics[width=10cm]{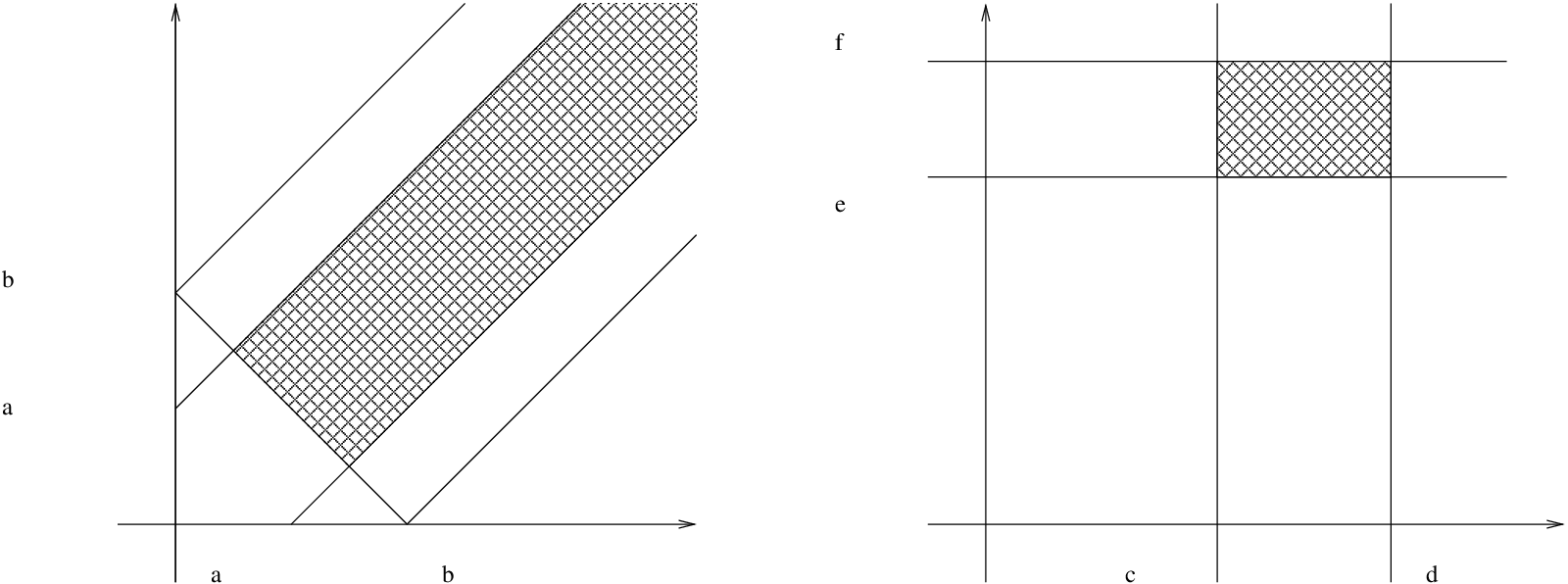}
\caption{Conditions on $d_2^o,d_3^o$.}
\label{fig:6gonfitcross}
\end{center}
\end{figure}
where 
\begin{align*}
A_1&=\max(r_4-r_3,2r_1-r_5+r_6-d_1,d_1),\\
A_2&=\min(r_4+r_3,r_5+r_6 +d_1-2r_1),\\
B_1&=\max(r_6-r_5,2r_1-r_3+r_4 -d_1, d_1),\\
B_2&=\min(r_6+r_5,r_3+r_4 +d_1- 2r_1).
\end{align*}
Note that $A_1, A_2, B_1, B_2$ are functions of $d_1$, 
which in turn satisfies $r_1+r_2 \geq d_1 \geq r_1$.
The intersection of the regions in Figure \ref{fig:6gonfitcross} is not empty if and only if 
\begin{displaymath}
 \begin{cases}
  r_2-r_1+B_2 & \geq A_1,\\
  r_2-r_1+A_2 & \geq B_1.
 \end{cases}
\end{displaymath} that is
\begin{displaymath}
 \begin{cases}
  r_2-r_1+\min(r_6+r_5,r_3+r_4 +d_1- 2r_1) & \geq 
\max(r_4-r_3,2r_1-r_5+r_6-d_1,d_1),\\
  r_2-r_1+\min(r_4+r_3,r_5+r_6 +d_1-2r_1) & \geq 
\max(r_6-r_5,2r_1-r_3+r_4 
-d_1, 
d_1).
 \end{cases}
\end{displaymath}
Most of these inequalities follow easily from the assumptions that $r_1 \leq 
\ldots \leq r_6$, $\max(r_1,r_2-r_1) \leq d_1\leq r_1+r_2$ and $ 
\min\{2r_1, 
(r_1+\ldots+r_5)-r_6\} =2r_1$, so $r_2+r_3+r_4 +r_5 \geq r_1+r_6$. The relevant ones are 
\begin{align*}
    2d_1 & \geq 5r_1-r_2-r_3-r_4-r_5+r_6,\\
2d_1 & \geq 5 r_1-r_2-r_3+r_4-r_5 -r_6.
\end{align*}
The second inequality follow from the first one. Thus the only relevant 
condition is 
$$2d_1 \geq 5r_1-r_2-r_3-r_4 -r_5+r_6.$$
To ensure the existence of $d_1 \in [\max(r_1,r_2-r_1) , r_1+r_2]$ satisfying 
the above condition it suffices to ensure that $2r_1+2r_2 \geq 5r_1-r_2-r_3-r_4 
-r_5+r_6$ i.e. that
$$3r_2 +r_3+r_4 +r_5\geq 3r_1+r_6.$$
This holds by assumptions, as
\begin{align*}
    2r_2& \geq 2r_1,\\
r_2+r_3+r_4+r_5  & \geq  r_1+r_6.
\end{align*}
\end{proof}

\begin{proposition}\label{proposition 6lowerbounds}
Assume $r \in \R_+^6$ is generic, ordered non-decreasingly and that 
 $\gamma \geq 2r_1$. Then the Gromov width of $\M(r_1,\ldots,r_6)$ is at 
least 
$4 \pi r_1$.
\end{proposition}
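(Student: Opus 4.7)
The plan is to apply Proposition \ref{prop diamondlike} by exhibiting an explicit diamond-like shape $\underline{\Diamond}^3(2r_1)$ sitting inside the interior of the moment map image $\Delta$ for the bending action along the system of diagonals of Figure \ref{fig:6gonsystem}. Since the bending action is toric on the dense open subset $U=\{d_1,d_2,d_3\neq 0\}\subset \M(r)$, and since any symplectic embedding of a ball into $U$ is automatically a symplectic embedding into $\M(r)$, it will suffice to fit such a diamond-like shape into the interior of $\Delta$, which is exactly $\mu(U)$.

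The first step is to pick the center. By Lemma \ref{lemma crossfits}, there exist $d_1^o\in(r_2-r_1,r_2+r_1)$ and values $d_2^o,d_3^o$ (coming from Lemma \ref{lemma goodd2d3}) such that $c(d_1^o,d_2^o,d_3^o)\geq 2r_1$. By the definitions of $l_2$ and $l_3$, this guarantees that the interior open segments through $(d_1^o,d_2^o,d_3^o)$ parallel to the $d_2$- and $d_3$-axes both have length at least $2r_1$ within $\Delta$. Combined with Lemma \ref{lemma goodd2d3}, which gives an axis-parallel segment of length exactly $2r_1$ in the $d_1$-direction through $(d_2^o,d_3^o)$, we obtain three open arms of length at least $2r_1$ meeting at the common point $(d_1^o,d_2^o,d_3^o)\in\mathrm{Int}\,\Delta$.

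Second, I would construct $\underline{\Diamond}^3(2r_1)$ as the convex hull of these three arms centered at $(d_1^o,d_2^o,d_3^o)$, choosing the lengths $l_j<0<g_j$ so that each arm fits inside $\Delta$ and $g_j-l_j=2r_1$ (this is possible since each arm has length at least $2r_1$). Convexity of $\Delta$, which is apparent from its defining inequalities \eqref{6gonmomentpolytope}, ensures that the entire convex hull lies in $\mathrm{Int}\,\Delta$.

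Finally, applying Proposition \ref{prop diamondlike} to the toric Hamiltonian $(S^1)^3$-manifold $U$ with moment map image containing $\underline{\Diamond}^3(2r_1)$ yields symplectic embeddings of balls $B^{6}_{2\pi(2r_1-\varepsilon)}$ into $U\subset\M(r)$ for every $\varepsilon>0$, so the Gromov width of $\M(r)$ is at least $4\pi r_1$. The only potential subtlety is the passage from toric manifolds (the setting in which Proposition \ref{prop diamondlike} is stated) to the open dense toric subset $U$; this causes no difficulty because the cited proposition is really about constructing a symplectic embedding into the preimage of a diamond-like shape, and such a preimage sits inside $U$.
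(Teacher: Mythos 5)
Your proof is correct and follows essentially the same route as the paper: it uses Lemmas \ref{lemma goodd2d3} and \ref{lemma crossfits} to locate a common center for three axis-parallel arms of length $\geq 2r_1$ inside $\Delta$, takes their convex hull, and applies Proposition \ref{prop diamondlike}. Your remark that the bending action is only toric on the dense open subset $\{d_1,d_2,d_3\neq 0\}$, and that the preimage of the open diamond lies inside this subset, is a welcome clarification of a point the paper leaves implicit.
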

\begin{proof}
 Take $(d_1^o,d_2^o,d_3^o)$ from Lemma \ref{lemma crossfits}. Then the sets 
 \begin{align*}
E_1&:=\{d_2=d_2^o,d_3=d_3^o\} \cap \Delta,\\  E_2&:=\{d_1=d_1^o,d_3=d_3^o\} \cap \Delta,\\ E_3&:=\{d_1=d_1^o,d_2=d_2^o\} \cap \Delta
\end{align*}
are intervals of length 
greater or equal $2r_1$ and they intersect at  $(d_1^o,d_2^o,d_3^o)$. 
Therefore their convex hull, $Conv(E_1,E_2,E_3)$ 
contains the closure of a diamond-like 
open region, $\underline{\Diamond}(2r_1)$, of size $2r_1$. 
Moreover $Conv(E_1,E_2,E_3)$ is 
contained in $\Delta$ (by convexity of $\Delta$). 
Hence it follows from Proposition \ref{prop 
diamondlike} that the Gromov width of $\M(r_1,\ldots,r_6)$ is at least 
$2 \pi \cdot 2r_1$.
\end{proof}
\subsection{Upper bounds}
We now turn to finding upper bounds for the Gromov width of $\M(r)$ in the cases when $r$ is generic and 
no maximal $r$-short index set has cardinality $1$, i.e. 
$\min\{2r_j, (\sum_{i \neq j}r_i)-r_j\,|\,j=1,\ldots,6\} = 2\min\{r_j\,|\,j=1,\ldots,6\}$.
(If such a maximal short set exists then $\M(r)$ is diffeomorphic to projective space as described in the Section \ref{section projective}.)
The goal is to show that the Gromov width cannot be greater than $4 \pi \min\{r_j\,|\,j=1,\ldots,6\}$.

For simplicity of notation we assume that the length vector $r$ is reshuffled so that
$$r_1 \leq r_2,\ r_3 \leq r_4,\ r_5\leq r_6.$$ 
This partial ordering allows us to say that, for example, the values of $d_1$ on $\M(r)$ are in the interval $[r_2-r_1, r_1+r_2]$, instead of saying $[|r_2-r_1|, r_1+r_2]$.
The image $\Delta=(d_1,d_2,d_3)(\M(r))$ is then
$$\Delta= C \cap \bigcap_{j=1}^{3}H_j^+$$
where $C$ is the cuboid of vertices $v_1, \ldots,v_8$:
\begin{align*}
v_1&=(r_2-r_1,r_4-r_3,r_6-r_5), &v_5&=(r_2-r_1,r_4-r_3,r_6+r_5),\\
v_2&=(r_2+r_1,r_4-r_3,r_6-r_5),&v_6&=(r_2+r_1,r_4-r_3,r_6+r_5),\\
v_3&=(r_2+r_1,r_4+r_3,r_6-r_5), &v_7&=(r_2+r_1,r_4+r_3,r_6+r_5),\\
v_4&=(r_2-r_1,r_4+r_3,r_6-r_5),&v_8&=(r_2-r_1,r_4+r_3,r_6+r_5),
\end{align*}
\begin{figure}[htbp]
\begin{center}
\psfrag{1}{\footnotesize{$v_1$}}
\psfrag{2}{\footnotesize{$v_2$}}
\psfrag{3}{\footnotesize{$v_3$}}
\psfrag{4}{\footnotesize{$v_4$}}
\psfrag{5}{\footnotesize{$v_5$}}
\psfrag{6}{\footnotesize{$v_6$}}
\psfrag{7}{\footnotesize{$v_7$}}
\psfrag{8}{\footnotesize{$v_8$}}
\psfrag{d1}{\footnotesize{$d_1$}}
\psfrag{d2}{\footnotesize{$d_2$}}
\psfrag{d3}{\footnotesize{$d_3$}}
\includegraphics[width=6cm]{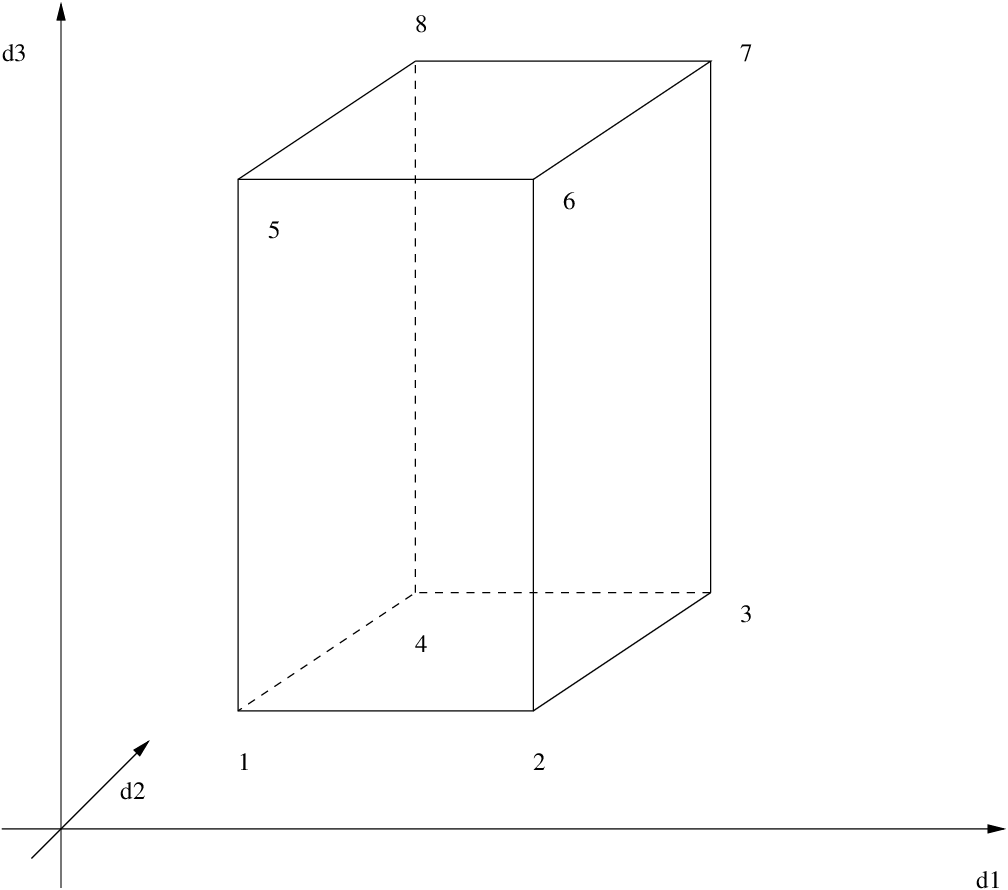}
\caption{The cuboid $C$.}
\label{fig:vertices}
\end{center}
\end{figure}
see Figure \ref{fig:vertices},
and $H_i^+$, $i=1,\,2,\,3,$ are the affine half spaces as in \eqref{eq:hj+}.
The hyperplanes $H_i$, $i=1,\,2,\,3,$ may give 
rise to the facets of $\Delta$ with inward normals $w_1=(-1,1,1)$, $w_2=(1,-1,1)$, $w_3=(1,1,-1).$

The table below collects the information, obtained by a straightforward computation, 
about when the vertices of $C$ belong to $H_i^+$ as well. 

\begin{tabular}{c|c|c|c}\label{chart}
vertex & is in $H_1^+$ if & is in $H_2^+$ if & is in $H_3^+$ if \\ \hline
$v_1$ &\{2,3,5\} is short & \{1,4,5\} is short &\{1,3,6\} is short\\ \hline
$v_2 $&\{1,2,3,5\} is short & \{4,5\} is short &\{3,6\} is short\\ \hline
$v_3 $&\{1,2,5\} is short & \{3,4,5\} is short & \{6\} is short \\ \hline
$v_4$ &\{2,5\} is short  & \{1,3,4,5\} is short &\{1,6\} is short \\ \hline
$v_5$ &\{2,3\} is short  & \{1,4\} is short  &\{1,3,5,6\} is short\\ \hline
$v_6$ &\{1,2,3\} is short  & \{4\} is short  &\{3,5,6\} is short\\ \hline
$v_7$ &\{1,2\} is short  & \{3,4\} is short  &\{5,6\} is short\\ \hline
$v_8$ & \{2\} is short  & \{1,3,4\} is short  &\{1,5,6\} is short\\ \hline
\end{tabular}
\begin{proposition}\label{6gon ub toric2}
Let $r \in \R^6_+$ be generic, ordered non-decreasingly and such that $\{1,6\}$ is short. 
Assume in addition that $\{4,6\}$ and $\{1,2,6\}$ are long. Then the Gromov width of the symplectic toric manifold $\M(r)$ is at most $4 \pi r_1$.
\end{proposition}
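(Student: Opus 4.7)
The plan is to describe the moment polytope $\Delta=(d_1,d_2,d_3)(\M(r))$ under the given assumptions, realize $\M(r)$ as (at most a single toric blow-up of) a smooth Fano toric $3$-fold, and then apply Lu's Theorems~\ref{theorem LuFanoUpperBounds} and~\ref{theorem LuBlowupUpperBounds}.

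The first step is to identify the facets of $\Delta$ from the inequalities~\eqref{6gonmomentpolytope}. Since $\{4,6\}$ is long, the minimum $\epsilon_{\{4,6\}}$ of $-d_1+d_2+d_3$ on the cuboid $C$ is positive, so $H_1$ is not a facet. The non-decreasing ordering together with genericity gives $r_1+r_2+r_4<r_3+r_5+r_6$ (each of $r_3-r_1,r_5-r_2,r_6-r_4$ is non-negative and the sum is strictly positive), so $\{1,2,4\}$ is short and $F_2^-$ is cut out; similarly $\{4,6\}$ long combined with $r_5\geq r_4$ gives $r_5+r_6\geq r_4+r_6>r_1+r_2+r_3+r_5$, so $\{5,6\}$ is long and $F_3^+$ is cut out. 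Thus $F_1^{\pm},F_2^+,F_3^-,H_3$ are always facets, while $H_2$ is a facet precisely when $\{2,6\}$ is short. I would proceed by a case analysis on this sign.

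If $\{2,6\}$ is long, $\Delta$ has five facets with primitive inward normals $u_1=e_1,\ u_2=-e_1,\ u_4=-e_2,\ u_5=e_3,\ u_7=(1,1,-1)$. Direct calculation shows the associated monotone polytope $\{x:\langle x,u_i\rangle\geq -1\}$ is Delzant (six simple vertices, each with unimodular normals), so $\M(r)$ is itself a smooth Fano toric $3$-fold and Theorem~\ref{theorem LuFanoUpperBounds} applies. If $\{2,6\}$ is short, then $u_6=(1,-1,1)$ is an additional facet normal of $\Delta$, and the identity $u_6=u_1+u_4+u_5$ identifies the fan of $\Delta$ as the star subdivision of the Fano fan $\Sigma$ (with rays $\{u_1,u_2,u_4,u_5,u_7\}$) at its maximal cone $\langle u_1,u_4,u_5\rangle$. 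Equivalently, $\M(r)$ is obtained from $X_\Sigma$ by a single toric blow-up at the corresponding fixed point, and Theorem~\ref{theorem LuBlowupUpperBounds} applies; in both sub-cases the underlying Fano fan, and hence the upper bound $\Upsilon(\Sigma,2\pi\varphi)$, is the same.

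The final step is to compute $\Upsilon(\Sigma,2\pi\varphi)$. The non-negative integer relations among the rays of $\Sigma$ are non-negative integer combinations of $u_1+u_2=0$ and $u_2+u_4+u_5+u_7=0$, with associated values $-(\lambda_1+\lambda_2)=2r_1$ and $-(\lambda_2+\lambda_4+\lambda_5+\lambda_7)=\gamma$, where $\gamma=r_1+\ldots+r_5-r_6$. Since $\{1,6\}$ is short we have $\gamma>2r_1$, so $\Upsilon(\Sigma,2\pi\varphi)=2\pi\min\{2r_1,\gamma\}=4\pi r_1$, yielding the claimed upper bound. The main technical hurdle is the Delzant / Fano verification for the monotone polytope in the first sub-case, which is an explicit but case-by-case check of simplicity and unimodularity at each of its six vertices.
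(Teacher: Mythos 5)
Your approach is the same in spirit as the paper's: identify the moment polytope $\Delta$ as a Delzant polytope of a toric Fano $3$-fold, possibly blown up at toric fixed points, and then invoke Lu's Theorems~\ref{theorem LuFanoUpperBounds} and~\ref{theorem LuBlowupUpperBounds}. The paper, however, first reshuffles the length vector to $\sigma(r)=(r_1,r_4,r_2,r_5,r_3,r_6)$, which forces $\Delta=C\cap H_3^+$ (neither $H_1$ nor $H_2$ cuts the cuboid for $\sigma(r)$) and yields a simplex with $1$, $2$, or $3$ corners chopped; you instead work with the original ordering of $r$, for which both $H_2$ and $H_3$ can cut $C$. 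The two routes are comparably long, but your version has a genuine gap.

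The gap is in the facet analysis. You claim that $\{1,2,4\}$ being short forces $F_2^-=\{d_2=r_4-r_3\}$ to be cut out. That implication does not hold: $\{1,2,4\}$ short only says that the vertex $v_6=(r_1+r_2,\,r_4-r_3,\,r_5+r_6)$ lies outside $H_3^+$, whereas the relevant vertex $v_2=(r_1+r_2,\,r_4-r_3,\,r_6-r_5)$ lies in $H_3^+$ exactly when $\{3,6\}$ is short — which is perfectly compatible with all of your hypotheses. For example $r=(1,\,1.1,\,1.2,\,2,\,2.1,\,3.5)$ satisfies: $\{1,6\}$ short, $\{4,6\}$ and $\{1,2,6\}$ long, $\{3,6\}$ short; and for that $r$ the facet $F_2^-$ survives in $\Delta$. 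So your list of possible facet normals is incomplete, and consequently your fan identification in the second sub-case (exactly one star subdivision) is not correct in general. The good news is that the argument is repairable in your framework: the extra inward normal $(0,1,0)$ satisfies $(0,1,0)=u_2+u_5+u_7$, which is again a star subdivision of the base Fano fan $\Sigma$ at a maximal cone (namely $\langle u_2,u_5,u_7\rangle$), disjoint from the possible subdivision at $\langle u_1,u_4,u_5\rangle$; so after adding this case, Theorem~\ref{theorem LuBlowupUpperBounds} still yields the bound $\Upsilon(\Sigma,2\pi\varphi)=4\pi r_1$. You should also supply the computation showing $\{3,6\}$ and $\{2,6\}$ are the \emph{only} sets that can flip an extra facet on, which requires checking the remaining vertices of $C$ against $H_2^+$ and $H_3^+$ using the chart.

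A second omission: the proposition's statement notwithstanding, the paper's proof must cover the case where the caterpillar bending action is not genuinely toric (when some diagonal can vanish, e.g.\ $r_3=r_4$). The paper handles this by a Moser-type continuity argument, deforming $r$ to a nearby length vector that gives a toric action and for which the same index sets stay long/short. Your proposal does not address this; as written it only applies when the polytope $\Delta$ is literally the moment image of a smooth toric action on all of $\M(r)$. You should either restrict explicitly to that toric situation or append the Moser deformation step, which is short but necessary for the proposition as used in the paper (Theorem~\ref{theo:6gon} is stated for all $r$ in the relevant chambers).
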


\begin{proof}
Reshuffle the length vector $r$ to
$$\sigma(r)=(r_1, r_4, r_2, r_5, r_3,r_6).$$
Note that $\sigma(r)$ is partially ordered and thus we can use the above table (applied to $\sigma(r)$) 
to analyze the set $\Delta= C \cap \bigcap_{j=1}^{3}H_j^+$, which is the image of $\M(\sigma(r))$ by $(d_1,d_2,d_3)$.
As $\{1,2,3,5\}$ and $\{1,2,3,4\}$ are short for $r$, 
(so $\{1,3,4,5\}$ and $\{1,2,3,5\}$ are short for $\sigma(r)$), the hyperplanes $H_1$, $H_2$ do not cut any vertex of the cuboid, 
and thus $\Delta=C \cap H_3^+$. Note that the assumption $\{1,2,6\}$  long implies that $v_1, v_5,v_6,v_8$ are not in $H_3^+$. 
The vertex $v_4$ is always in $H_3^+$ as we are assuming that $\{1,6\}$ is short. 
Depending  on whether $0$, $1$, or $2$ of the sets $\{2,6\}$ and $\{3,6\}$ are short, (corresponding to $0$, $1$, or $2$ of 
the vertices $v_2,v_7$ being in $H_3^+$), the set $\Delta$ is a simplex with $1$, $2$ or $3$ corners chopped off, 
respectively (one corner is always chopped off as $v_4$ is in $H_3^+$). 
Let $\mathcal{S}$ be the simplex bounded by the hyperplanes $H_3$, $\{d_1=r_1+r_4\}$, $\{d_2=r_2+r_5\}$ and $\{d_3=r_6-r_3\}$. 
The vertex $H_3 \cap \{d_2=r_2+r_5\} \cap \{d_3=r_6-r_3\}$ of the simplex $\mathcal{S}$ is chopped off in $\Delta$ by the hyperplane $\{d_1=r_4-r_1\}$ (as $v_4 \in H_3^+$). Let $\Delta'$ denote the simplex $\mathcal{S}$ with one corner chopped. Note that the vectors $(1,0,0)$ and $(-1,0,0)$ are among the inward normals to the facets of $\Delta'$.
The vertices $H_3 \cap \{d_1=r_1+r_4\} \cap \{d_3=r_6-r_3\}$ and $H_3 \cap  \{d_1=r_1+r_4\} \cap \{d_3=r_6+r_3\}$ of the simplex $\mathcal{S}$ may also be chopped in $\Delta$ depending on whether $\{2,6\}$ and $\{3,6\}$ are short.

If $r_1 \neq r_4$, $r_2\neq r_5$, and $r_3\neq r_6$ then the bending action on $\M(\sigma(r))$ is toric and $\Delta$ is the moment map image.
This implies that $\M(\sigma(r))$ with the bending action is 
$\C \P ^3$ blown up at $1$, $2$ or $3$  points.
In other words, it is a toric Fano manifold corresponding to the polytope $\Delta'$, or a blow up of this manifold at $1$ or $2$ toric fixed points.
Applying Theorem \ref{theorem LuFanoUpperBounds} or \ref{theorem LuBlowupUpperBounds} we get that the Gromov width of $\M(\sigma(r))$ is at most $ 2 \pi \,(r_1 +r_2 -(r_2-r_1))=4 \pi r_1.$ 
Since $\M(\sigma(r))$ and $\M(r)$ are symplectomorphic, the Gromov width of $\M(r)$ is also at most $4 \pi r_1$.

If at least one of $r_1 \neq r_4$, $r_2\neq r_5$, and $r_3\neq r_6$ is not satisfied, then the bending action is defined only on an open dense subset of  $\M(\sigma(r))$ and the above argument does not apply. In this situation, one can use Moser's trick as in the case 
of $5$-gons described in detail in Section \ref{section 5gons}.
Let 
$$\M_t(r)=\M(r_1,r_2,r_3,r_4+t,r_5+t,r_6+t).$$ 
For $t>0$ small, the polygon space $\M_t(r)$ with the bending action induced using the symplectomorphism 
$$\M(r_1,r_2,r_3,r_4+t,r_5+t,r_6+t) \simeq \M(r_1, r_4+t, r_2, r_5+t, r_3,r_6+t)$$ is toric.
Moreover, 
if $\{4,6\}$ and $\{1,2,6\}$ are long for $\M(r)$, 
then $\{4,6\}$ and $\{1,2,6\}$ are also long for $\M_t(r)$ 
 and the Gromov width of $\M_t(r)$ is $4\pi r_1$ by Proposition \ref{6gon ub toric2}.

Assume that there exists a symplectic embedding of a ball of capacity $a > 4 \pi r_1$ into $\M(r)$.
As in Section \ref{section 5gons} we use Moser's trick to show that the Gromov width of $\M(r)$ is not larger than $4\pi r_1$.
Therefore there cannot exist an embedding of a ball of capacity $a > 4 \pi r_1$ into $\M(r)$.
\end{proof}

\begin{proposition}\label{6gon ub 3bu}
Let $r$ be generic, ordered non-decreasingly and such that 
$\{2,3,6\}$ and $\{5,6\}$ are short. Then the Gromov width of the symplectic toric manifold $\M(r)$ is at most $4 \pi r_1$.
\end{proposition}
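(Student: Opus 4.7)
The plan is to follow the strategy established in Proposition \ref{6gon ub toric2}: after a suitable reshuffling $\sigma \in S_6$ of the length vector, realize $\M(\sigma(r))$ as a toric manifold obtained from a toric Fano manifold by a sequence of toric blow-ups at fixed points, and then apply Theorem \ref{theorem LuBlowupUpperBounds} of Lu.

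Concretely, I would choose $\sigma$ with two aims. First, $r_1$ should sit as the smaller element of one of the three pairs of $\sigma(r)$, so that the two opposite facet normals bounding that coordinate direction in the moment polytope carry scalar parameters summing to $-2r_1$; this is the ingredient needed for Lu's infimum $\Upsilon(\Sigma, 2\pi\varphi)$ to produce the value $4\pi r_1$. Second, $\sigma$ should translate the hypotheses $\{2,3,6\}$ short and $\{5,6\}$ short into clean shortness conditions on $\sigma(r)$ which, together with the ordering $r_1 \leq \cdots \leq r_6$ and the fact that subsets of short sets are short, allow me to use the chart to determine precisely which vertices of the cuboid $C$ lie outside each half-space $H_j^+$. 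With $\sigma$ so chosen, I would argue that the moment polytope $\Delta = C \cap \bigcap_j H_j^+$ is combinatorially a three-dimensional simplex with up to three corners chopped off by blow-up facets, corresponding to $\M(\sigma(r))$ being obtained from $\C\P^3$ (or a Fano blow-up of it) by a sequence of toric blow-ups at toric fixed points. Applying Theorem \ref{theorem LuBlowupUpperBounds} to the relevant pair of opposite normals then yields the desired upper bound $4\pi r_1$ for the Gromov width of $\M(\sigma(r))$, and hence of $\M(r)$ by the symplectomorphism $\M(r) \simeq \M(\sigma(r))$.

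This argument requires the bending action on $\M(\sigma(r))$ to be globally defined, which holds exactly when the three pairs of $\sigma(r)$ have distinct entries. When this fails --- some diagonal then vanishes on part of $\M(\sigma(r))$ and the action is only generically toric --- I would invoke the Moser-type continuity argument used at the end of Section \ref{section 5gons} and at the end of Proposition \ref{6gon ub toric2}: perturb $r$ to a family $r_t$ inside the same chamber for which all three pairs of $\sigma(r_t)$ are distinct (so the toric upper bound $4\pi r_1$ applies to $\M(r_t)$), and use Moser's trick to transport a hypothetical symplectic ball embedding of capacity exceeding $4\pi r_1$ from $\M(r)$ into $\M(r_t)$, reaching a contradiction.

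The main obstacle will be the combinatorial bookkeeping: pinpointing the correct $\sigma$, verifying via the chart and the two hypotheses exactly which cuboid vertices are cut off by each $H_j$, and confirming that the resulting polytope indeed arises from a Fano toric manifold by successive blow-ups at toric fixed points, so that Theorem \ref{theorem LuBlowupUpperBounds} can be applied. The two hypotheses $\{2,3,6\}$ short and $\{5,6\}$ short, together with the total ordering, are designed precisely so that this configuration of chopped-off corners is realized, but the check requires a careful case analysis.
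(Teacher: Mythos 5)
Your overall strategy is the same as the paper's---reshuffle to $\sigma(r)=(r_1,r_4,r_2,r_5,r_3,r_6)$ so that $r_1$ is the smaller element of the first pair, analyze $\Delta = C \cap \bigcap_j H_j^+$ using the vertex chart, invoke Theorem \ref{theorem LuBlowupUpperBounds}, and handle the non-toric cases via the Moser continuity argument. That framework is correct.

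However, there is a genuine error in the combinatorial picture you describe. You predict that $\Delta$ is ``combinatorially a three-dimensional simplex with up to three corners chopped off,'' so that $\M(\sigma(r))$ is obtained from $\C\P^3$ (or a Fano blow-up of it) by toric blow-ups. That is the picture from Proposition \ref{6gon ub toric2}, where the hypotheses $\{4,6\}$ and $\{1,2,6\}$ long force $H_1$ and $H_2$ to be inactive and $\Delta = C \cap H_3^+$ becomes a chopped simplex. Under the present hypotheses the picture is different: $\{5,6\}$ short is equivalent to $\{1,2,3,4\}$ long, which (since $r$ is ordered non-decreasingly) forces \emph{every} $4$-element set to be long and every $2$-element set to be short. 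Checking the chart for $\sigma(r)$, all three hyperplanes $H_1,H_2,H_3$ now cut the cuboid, and together with the shortness of $\{2,3,6\}$ one finds that exactly the three pairwise non-adjacent vertices $v_2,v_4,v_5$ are chopped off, each by a different $H_j$, while $v_1,v_3,v_6,v_7,v_8$ survive. The resulting polytope has $14$ vertices and is not a simplex with chopped corners (which has at most $12$). The correct Fano base is the cuboid $C$, i.e.\ $(\C\P^1 \times \C\P^1 \times \C\P^1, 4r_1\omega_{FS}\oplus 4r_2\omega_{FS}\oplus 4r_3\omega_{FS})$, blown up at three toric fixed points; applying Theorem \ref{theorem LuBlowupUpperBounds} with the two opposite normals in the $d_1$-direction (scalars summing to $-2r_1$) then gives the bound $4\pi r_1$. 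So your final conclusion would come out right once the Fano base is corrected, but as written the claimed polytope and the claimed Fano manifold $\C\P^3$ are wrong, and the proof could not be carried through in the form you describe.
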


\begin{proof}
Reshuffle the length vector $r$ to
$$\sigma(r)=(r_1, r_4, r_2, r_5, r_3,r_6).$$
Note that $\sigma(r)$ is partially ordered and thus we can use the above table (applied to $\sigma(r)$) to analyze the set 
$\Delta= C \cap \bigcap_{j=1}^{3}H_j^+$, which is the image of $\M(\sigma(r))$ by $(d_1,d_2,d_3)$.
As $\{1,2,3,4\}$ is long (by assumption), all $4$--element sets are long, and all $2$--element sets are short. Thus each hyperplane is cutting at least one vertex.
Our assumptions guarantee that each of them cuts exactly one vertex (out of the vertices $v_2$, $v_4$, $v_5$). Therefore $\Delta$ is the cuboid $C$ with three non-adjacent corners chopped off.
If $r_1 \neq r_4$, $r_2\neq r_5$, and $r_3\neq r_6$ then the bending action on $\M(\sigma(r))$ is toric, and $\M(\sigma(r))$ is symplectomorphic to the blow up of the toric Fano manifold $(\C\P^1 \times \C\P^1 \times \C\P^1, 4r_1 \omega_{FS}\oplus 4r_2 \omega_{FS}\oplus 4r_3 \omega_{FS})$ (corresponding to the cuboid $C$), at three toric fixed points. Applying Theorem \ref{theorem LuBlowupUpperBounds} we get that the Gromov width of $\M(\sigma(r))$ is at most $ 2 \pi \,(r_1 +r_2 -(r_2-r_1))=4 \pi r_1.$

If at least one of $r_1 \neq r_4$, $r_2\neq r_5$, and $r_3\neq r_6$ is not satisfied, one uses Moser's trick as above.
\end{proof}

\begin{remark}
In the cases not covered by Propositions \ref{6gon ub toric2}, \ref{6gon ub 3bu} the polygon space
$\M(r)$ equipped with the bending action along the system of diagonals as in Figure \ref{fig:6gonsystem}
is not obtained from a toric Fano manifold by blowing up at toric fixed points, and so 
Theorem \ref{theorem LuBlowupUpperBounds} cannot be applied. 
Note however that all $\M(r)$ are obtained by a sequence of symplectic cuts\footnote{For the definition of symplectic cuts see \cite{Lerman}}
from the manifold associated to
the cuboid $C$, which is  $(\CP^1)^3$, with some scaling of Fubini-Study symplectic forms on each $\C \P^1$ factor.
It seems very natural to expect that the Gromov width of a compact symplectic manifold would not increase under the 
symplectic cut operation.
This would imply that the Gromov width of $\M(r)$ would be bounded above 
by the Gromov width of the manifold corresponding to $C$, which is $4 \pi r_1$. Together with 
Proposition \ref{proposition 6lowerbounds} this would prove that the Gromov width of $\M(r)$ is exactly $4 \pi r_1$.
\end{remark}

We now use a different argument to obtain the upper bound for the Gromov width of $6$-gons,
under different restrictions on the lengths $r_i$.
When $\Delta$ contains a whole facet $F$ of the cuboid $C$,
where one of the side lengths of $F$ is $2r_1$, then we obtain that the Gromov width of
the associated polygon space $\M(r)$ is at most $2 \pi \,2r_1$. We do this by showing the non-vanishing
of some Gromov--Witten invariant, as explained below. We are grateful to Dusa McDuff
for suggesting us this approach.

Suppose that the moment map image $\Delta$ for the toric (so K\"ahler) manifold $\M(r)$ contains a whole facet of the cuboid $C$,
where one of the side lengths is $2r_1$. 
Call this facet $F$, and let
$D_F := (d_1,d_2,d_3)^{-1} (F) \subset \M(r)$. This is a complex and symplectic submanifold of $\M(r)$.
 Note that as $r$ is generic, some neighborhood of $F$ in $C$ is also in $\Delta$. 
 Therefore some neighborhood of $D_F$ in $\M(r)$ is diffeomorphic
 to a neighborhood of $\C \P^1 \times \C \P^1 \times \{\pt \}$ 
in the K\"ahler manifold $(\C \P^1 \times \C \P^1 \times \C \P^1, 4r_1 \omega_{FS}\oplus 4r_3 \omega_{FS}\oplus 4r_5 \omega_{FS})$ 
corresponding to the cuboid $C$, by a diffeomorphism preserving the K\"ahler structure.
This means that the tangent space $T\M(r)$ in a neighborhood of $D_F$ splits as a sum of line bundles 
$T\C \P^1 \oplus T\C \P^1 \oplus T\C \P^1$, denoted later by $\mathbb{L}_1,\mathbb{L}_2$ and $\mathbb{L}_3$, and thus
we can choose a compatible almost complex structure $J$ on $\M(r)$ which near
 $D_F$ is a direct sum 
$J=J_1 \oplus J_2 \oplus J_3$, where each $J_l$ is a complex structure on the respective copy of $\C \P^1$.

Let $A\in H_2(\M(r);\Z)$ be the homology class corresponding to the preimage (under the moment map $(d_1,d_2,d_3)$) 
of the edge of length $2r_1$. 
For simplicity of notation assume that this edge corresponds 
to the first $\C \P^1$ in $D_F=\C \P^1 \times \C \P^1 \times \{\pt \}$.
Note that the splitting of $T\M(r)$ near $D_F$ implies that the first Chern classes of line bundles $\mathbb{L}_1,\mathbb{L}_2$ and $\mathbb{L}_3$ evaluated on 
$[A]$ give 
$$c_1(\mathbb{L}_1)[A]=c_1(T \C\P^1)[ \C\P^1]=2, \qquad c_1(\mathbb{L}_2)[A]=0=c_1(\mathbb{L}_3)[A],$$ and thus $c_1(T\M(r))[A]=2$.
As these Chern numbers are not smaller than $-1$, by Lemma 3.5.1 from \cite{MSholom}, $J$ is regular for $A$.
Moreover, the equation \eqref{GWittendim} holds for $d=0$, and $k=1$,
 and 
one can consider the Gromov-Witten invariant, $\Phi_{A,1}([\pt])$, associated to the homology class $A$ and evaluated on the 
Poincar\'e dual to the homology class of a point. 
Observe that, $$\Phi_{A,1}([\pt]) =1 \neq 0.$$
Indeed, since $J|_{D_F}$ is a product, each $J$-holomorphic curve in $D_F$ must project to a $J$-holomorphic curve in each factor, 
and hence it must be of the form $\C \P^1\times \{ \pt \}$.
Therefore there is one such curve through every point $x\in D_F$. 
In general there 
might be other $J$-holomorphic curves in the manifold $\M(r)$ that go through the designated point but 
do not lie in $D_F$, which could count positively or negatively.
Note however that $D_F$ is $J$-holomorphic, and $A\cdot [D_F] = 0$.
Therefore positivity of intersections of $J$-holomorphic submanifolds (see \cite[Theorem 2.6.3]{MSholom} which, 
though stated in dimension $4$, also holds for higher dimensions) tells us that every $J$-holomorphic curve must lie entirely in $D_F$. 
Hence there 
are no other $J$-holomorphic curves and $\Phi_{A,1}([\pt]) =1$.
The non-vanishing of the above Gromov-Witten invariant (for the chosen regular $J$) implies that for a generic 
choice of an almost complex structure $J'$, the evaluation map is onto, and thus the Gromov width of $\M(r)$ is at most $\omega(A)=4 \pi r_1$ (Theorem \ref{thm ub jholom}).

Using this argument, we show that when the bending action on $\M(r)$ is toric and $\Delta$ contains one facet $F$ of the cuboid $C$ as above, with one edge of length $2r_1$
then the Gromov width of $\M(r)$ is at most   $2 \pi 2r_1$.
\begin{proposition}
 Let $r \in \R_+^6$ be generic, ordered non-decreasingly. If $\{1,2,6\}$ and $\{1,2,3,4\}$ are short, then the Gromov width of $\M(r)$ 
is at most $4 \pi r_1$.
\end{proposition}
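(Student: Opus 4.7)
The plan is to apply the Gromov--Witten invariant argument sketched immediately before the statement: I want to produce a $2$-dimensional facet $F$ of the cuboid $C$ lying entirely inside the moment polytope $\Delta$ and having at least one edge of length $2r_1$, and then count the $J$-holomorphic spheres in the class $A$ of a fiber over such an edge.

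The first step is to reshuffle the length vector to $\sigma(r) = (r_1, r_6, r_2, r_5, r_3, r_4)$; this is compatible with the pair ordering used in the chart of Section~\ref{section 6gons} since $r_1 \le r_6$, $r_2 \le r_5$ and $r_3 \le r_4$. In these coordinates the cuboid $C$ has edges of lengths $2r_1$, $2r_2$, $2r_3$ in the $d_1$, $d_2$, $d_3$ directions respectively. I would then show that the top facet $F := C \cap \{d_3 = r_3 + r_4\}$, with vertices $v_5, v_6, v_7, v_8$, lies inside $\Delta$. Reading the chart applied to $\sigma(r)$ and translating the $\sigma$-labels back to the original indexing via the permutation $(1,2,3,4,5,6) \mapsto (1,6,2,5,3,4)$, the twelve conditions $v_k \in H_j^+$ reduce to asking that the sets
\[
\{5\},\ \{6\},\ \{1,5\},\ \{1,6\},\ \{2,5\},\ \{2,6\},\ \{3,4\},\ \{1,2,5\},\ \{1,2,6\},\ \{1,3,4\},\ \{2,3,4\},\ \{1,2,3,4\}
\]
be short in the original indexing. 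The singletons and the 2-element sets listed are all short from $r_1 \le \dots \le r_6$ (the only long 2-set, $\{5,6\}$, does not appear); $\{1,3,4\}$ and $\{2,3,4\}$ are subsets of the hypothesis $\{1,2,3,4\}$; $\{1,2,5\}$ is short since $(r_1-r_3)+(r_2-r_4)+(r_5-r_6) \le 0$ is strict by genericity; and $\{1,2,6\}$, $\{1,2,3,4\}$ are the hypotheses. Hence $F \subset \Delta$.

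Assume first $r_3 \ne r_4$, so that the bending action on $\M(\sigma(r))$ is toric: indeed $r_1 < r_6$ and $r_2 < r_5$ follow from the easy consequence $r_1 + r_2 < r_5 \le r_6$ of the two short hypotheses (obtained by adding them). A neighborhood of $D_F := (d_1,d_2,d_3)^{-1}(F) \subset \M(\sigma(r))$ is then symplectomorphic to a neighborhood of $\CP^1 \times \CP^1 \times \{\pt\}$ in $(\CP^1)^3$ with Fubini--Study forms of areas $4\pi r_1$, $4\pi r_2$, $4\pi r_3$. I would choose a compatible almost complex structure $J$ that is a product near $D_F$, take $A \in H_2(\M(\sigma(r));\Z)$ to be the class of a fiber $\CP^1 \times \{\pt\} \times \{\pt\}$ (so $\omega(A) = 4\pi r_1$ and $c_1[A] = 2$), and use positivity of intersections---since $D_F$ is $J$-holomorphic with $A \cdot [D_F] = 0$---to force every $J$-holomorphic sphere in class $A$ through a point of $D_F$ to lie in $D_F$ and to be of product form. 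This gives $\Phi_A([\pt]) = 1$, and by Proposition~\ref{thm ub jholom} the Gromov width of $\M(r)$ is at most $\omega(A) = 4\pi r_1$.

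If instead $r_3 = r_4$ the direct argument fails because the bending action on $\M(\sigma(r))$ is only defined on a dense open subset. In that case I would run the Moser continuity argument exactly as at the end of Section~\ref{section 5gons}: perturb to $r(t) = (r_1, r_2, r_3, r_4+t, r_5, r_6)$ with $t > 0$ small, note that genericity together with the two short hypotheses are open conditions preserved under perturbation, apply the toric case to bound the Gromov width of $\M(r(t))$ by $4\pi r_1$, and transfer the bound back to $\M(r) = \M(r(0))$. The main obstacle I anticipate is the choice of reshuffle: the natural reshuffle $(r_1, r_4, r_2, r_5, r_3, r_6)$ used in Propositions~\ref{6gon ub toric2} and~\ref{6gon ub 3bu} does not work under the hypotheses of the present statement (it would require, e.g., $\{3,6\}$ to be short, which can fail, as in $r=(1,2,15,15+\varepsilon,20,30)$), and only the alternative pairing of $r_1$ with the largest edge $r_6$ forces all twelve vertex conditions into subsets of our two assumed short sets.
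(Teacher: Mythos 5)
Your argument is correct and follows the same route as the paper's proof: reshuffle to $\sigma(r)=(r_1,r_6,r_2,r_5,r_3,r_4)$, verify via the chart that the two short hypotheses force the top facet of the cuboid $C$ (the one with shortest edge $2r_1$) into $\Delta$, invoke the Gromov--Witten positivity-of-intersections argument, and handle $r_3=r_4$ by the Moser continuity trick. The only cosmetic difference is the perturbation: the paper uses $(r_1,r_2,r_3,r_4+t,r_5+t,r_6+t)$, which automatically preserves the non-decreasing ordering, while your $(r_1,r_2,r_3,r_4+t,r_5,r_6)$ may require a re-ordering when $r_4=r_5$ (after which the hypotheses still hold, but this needs an extra sentence); otherwise the arguments coincide, and your observation that $\{1,2,5\}$ is automatically short (and that the hypotheses force $r_1<r_6$, $r_2<r_5$) matches the paper's.
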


\begin{proof}
We show that if $\{1,2,6\}$ and $\{1,2,3,4\}$ are short, then the top facet of the cuboid $C$ is in $\Delta$.
Assume first that $r_3 \neq r_4$. 
Consider the following reshuffling of $r$
$$
 \sigma(r)=(r_1, r_6, r_2, r_5, r_3,r_4). 
 $$
The bending action on $\M(\sigma(r))$ associated to the choice of diagonals in Figure \ref{fig:6gonsystem} is toric as $r_3 \neq r_4$ (so also $r_1 \neq r_6$ and $r_2 \neq r_5$).
The moment map image
$\Delta$ contains the ``top'' facet of the cuboid $C$ as it contains the vertices $v_5,v_6,v_7,v_8$ (see the table on page \pageref{chart} applied to $\sigma(r)$).
Then the image $\Delta$ of $\M(\sigma(r))$ contains the vertices $v_5,v_6,v_7,v_8$ if and only if
$\{1,2,3\}$, $\{1,3,4\}$ and $\{1,3,5,6\}$ are short for $\sigma(r)$, or, equivalently, if and only if 
$\{1,2,6\}$, $\{1,2,5\}$ and $\{1,2,3,4\}$ are short for $r$.
(Note that $\{1,2,6\}$ short implies  that $\{1,2,5\}$ is also short.)
Since the top facet of $\Delta$ has shortest edge of length $2 r_1$, by the argument above, if
$\{1,2,6\}$ and $\{1,2,3,4\}$ are short then the Gromov width of $\M(r)$ is less or equal 
to $4 \pi r_1$.

If $r_3=r_4$ then the bending action on $\M(\sigma(r))$ is not toric and the above argument does not apply.
In this case we proceed as before: consider the family $\M_t:=\M(r_1,r_2,r_3,r_4+t,r_5+t,r_6+t)$ and use the continuity argument, ``Moser's trick".

\end{proof}

Note that for $r \in \R^6_+$ ordered non-decreasingly, and such that
$$
\begin{array}{ll}
  \{1,2,6\} &\text{long} \\
\{4,6\} &\text{short} 
 \end{array}
\quad \quad \text{or} \quad \quad
 \begin{array}{ll}
  \{1,2,6\} &\text{short} \\
\{1,2,3,4\} &\text{long} \\
\{2,3,6\} &\text{long} 
 \end{array}
$$
none of our results for the upper bound applies. Hence in these cases we only have the 
lower bound as in Proposition \ref{proposition 6lowerbounds}.


\begin{thebibliography}{99}
\setlength{\labelwidth}{1.2cm} 
\bibitem[C16]{Caviedes} A. Caviedes Castro, \emph{Upper bound for the Gromov width of coadjoint orbits of compact Lie groups}, J. Lie Theory {\bf 26} (2016), 821--860, Copyright Heldermann Verlag 2016.

\bibitem[CLS11]{CLS} D. Cox, J. Little, H. Schenck, \emph{Toric Varieties}, Graduate Studies in Mathematics, {\bf 124}. 
American Mathemtical Society, Providence, RI, 2011. xxiv+841pp. ISBN: 978-0-8218-4819-7.

\bibitem[FHS]{FHS} M. Farber, J.-Cl. Hausmann, D. Schuetz, \emph{On the conjecture of Kevin Walker}, J. Topol. Anal., {\bf 1} (2009), 65 --86.

\bibitem[GM82]{gm} I.M.~Gelfand, R.D.~MacPherson, \emph{Geometry in Grassmannians
and a generalization of the dilogarithm}, Adv. in Math. {\bf 44} (1982), 279--312.

\bibitem[HK97]{HausmannKnutson} J.-C. Hausmann, A. Knutson, \emph{Polygon
spaces and Grassmannians}, Enseign. Math. (2) {\bf 43} (1997), 281--321

\bibitem[HK00]{HKequilateral} J.-C. Hausmann, A. Knutson, \emph{A limit of toric symplectic forms that has no 
periodic Hamiltonians}, Geom. Funct. Anal. {\bf 10} (2000), 556--562. 

\bibitem[KM96]{km} M.~Kapovich, J.J.~ Millson, \emph{The symplectic geometry of
polygons in Euclidean space,}  J. Differential Geom.  {\bf 44}  (1996),  no. 3,
479--513.

\bibitem[NU14]{nu}Y. Nohara, K. Ueda, \emph{Toric degenerations of integrable systems on Grassmannians and polygon spaces} 
Nagoya Math. J. {\bf 214} (2014), 125--168.

\bibitem[K06]{K} A. Kasprzyk, \emph{Toric Fano Varieties and Convex Polytopes}. 
Thesis (Doctor of Philosophy (PhD)). University of Bath. 2006

\bibitem[KT05]{KarshonTolman} Y. Karshon, S. Tolman, \emph{The Gromov width of complex 
Grassmannians}, Algebr. and Geom. Topol. {\bf 5} (2005), 911--922.

\bibitem[Kl94]{klyachko} A.~Klyachko, \emph{Spatial polygons and stable
configurations of points in the projective line}, Algebraic geometry and its
applications (Yaroslavl', 1992),  67--84, Aspects Math., E25, Vieweg,
Braunschweig, 1994.

\bibitem[L95]{Lerman} E. Lerman, \emph{Symplectic Cuts}, Math. Res. Lett. {\bf 2} (1995), 247--258. 

\bibitem[LMS13]{LMS} J. Latschev, D. McDuff, F. Schlenk, \emph{The Gromov width of 
$4$-dimensional tori}, Geometry and Topology {\bf 17} (2013) 2813--2853.

\bibitem[Lu06]{Lu} G. Lu, \emph{Symplectic capacities of toric manifolds and
related results}, Nagoya Math. J. {\bf 181} (2006), 149--184.

\bibitem[M14]{Mandini} A. Mandini, \emph{The
Duistermaat-Heckman formula and the cohomology of moduli spaces of polygons}, J.
Symplectic Geom. {\bf 12} (2014), 171--213.

\bibitem[McD98]{McDuff} D. McDuff, \emph{Lectures on Symplectic Topology},
IAS/Park City Math Series vol.7, ed. Eliashberg and Traynor, Amer. Math. Soc (1998).

\bibitem[McD09]{McDuffellipsoids} D. McDuff, \emph{Symplectic embeddings of 4-dimensional ellipsoids}, J. Topol. {\bf 2} (2009), 1--22.

\bibitem[MS04]{MSholom} D. McDuff, D. Salamon, \emph{J-holomorphic curves and symplectic topology}, Second edition. American Mathematical Society Colloquium Publications, 52. American Mathematical Society, Providence, RI, 2004. ISBN 0-8218-3485-1.

\bibitem[P14]{Pabiniak} M. Pabiniak, \emph{Gromov width of non-regular
coadjoint orbits of U(n), SO(2n) and SO(2n+1)},  Math. Res. Lett. {\bf 21} (2014), 187 -- 205.

\bibitem[Sch]{Schlenk} F. Schlenk, \emph{Embedding problems in symplectic geometry}, 
de Gruyter Expositions in Mathematics, vol. 40, Berlin, (2005), 
x+250 pp., ISBN 3-11-017876-1.

\bibitem[Sch05]{Sch} F. Schlenk, \emph{Packing symplectic manifolds by hand}, 
J. Symplectic Geom. {\bf 3} (2005), 313--340.

\bibitem[T95]{Traynor} L. Traynor, \emph{Symplectic Packing Constructions}, J.
Differential Geom. {\bf 42}, (1995), 411 -- 429.

\bibitem[Z10]{Zoghi} M. Zoghi \emph{The Gromov width of Coadjoint Orbits of Compact Lie Groups}, Ph.D. Thesis, University of Toronto, 2010.


\end{thebibliography}
 \end{document}